\renewcommand{\headrulewidth}{0pt}
\newcommand{\id}{{\operatorname{id}}}
\newcommand{\Tot}{{\operatorname{\rm Tot}}}
\newcommand{\Hom}{{\operatorname{\rm Hom}}}
\newcommand{\Cone}{{\operatorname{\rm Cone}}}
\newcommand{\ra}{\rightarrow}
\newcommand{\spec}{\mathrm{Spec}\,}
\newcommand{\sm}{\boldsymbol{Sm}}
\newcommand{\del}{\partial}
\newcommand{\blank}{\underline{\;\;}}
\newcommand{\ds}{\displaystyle}
\newcommand{\ub}{\underline}
\newcommand{\pt}{\mathrm{pretr}}
\newcommand{\mc}{\mathcal}
\newcommand{\Ab}{\mathbf{Ab}}
\newcommand{\by}[1]{\overset{#1}{\to}}
\renewcommand{\phi}{\varphi}
\renewcommand{\em}{\rm}
\newcommand{\Cube}{\mathbf{Cube}}
\newcommand{\Sets}{\boldsymbol{Sets}}
\newcommand{\dg}{{dg_e}}
\newcommand{\ETot}{{\operatorname{\rm ETot}}}
\newcommand{\un}{\underline}
\newcommand{\ext}{\text{ext}}
\newcommand{\A}{\mathbb{A}}
\newcommand{\Q}{\mathbb{Q}}
\newcommand{\alt}{\mathrm{alt}}
\numberwithin{section}{chapter}
\theoremstyle{definition}
\newtheorem{dfn}{Definition}[section]
\theoremstyle{plain}
\newtheorem{thmintro}{Theorem}
\newtheorem{thm}{Theorem}[section]
\newtheorem{lem}[thm]{Lemma}
\newtheorem{prop}[thm]{Proposition}
\newtheorem{cor}[thm]{Corollary}
\theoremstyle{remark}
\newtheorem{rem}[dfn]{Remark}
\numberwithin{equation}{section}
\begin{document}
\title{Tensor Structure on Smooth Motives}
\author{Anandam Banerjee}
\date{\today}
\maketitle
	
\begin{abstract}	
Grothendieck first defined the notion of a ``motif'' as a way of finding a
universal cohomology theory for algebraic varieties. Although this program has not been realized, Voevodsky has constructed a triangulated category of geometric motives over a perfect field, which has many of the properties expected of the derived category of the conjectural abelian category of motives. The construction of the triangulated category of motives has been extended by Cisinski-D\'{e}glise  to a triangulated category of motives over a base-scheme $S$.
Recently, Bondarko  constructed a DG category of motives, whose homotopy category is equivalent to 
Voevodsky's category of effective geometric motives, assuming resolution of singularities. 
Soon after, Levine  extended this idea to construct a DG category
whose homotopy category is equivalent to the full subcategory of motives
over a base-scheme $S$ generated by the motives of smooth projective $S$-schemes, 
assuming that $S$ is itself smooth over a perfect field. 
In both constructions, the tensor structure requires $\mathbb{Q}$-coefficients. In my thesis, I show how to provide a tensor structure on the homotopy category mentioned above, when $S$ is semi-local and essentially smooth over a field of characteristic zero. 
This is done by defining a pseudo-tensor structure on the DG category
of motives constructed by Levine, which induces a tensor structure on its homotopy category.
\end{abstract}


\cleardoublepage
\phantomsection
\chapter*{Acknowledgements}

\thispagestyle{plain}

	I would like to thank my advisor Marc Levine for enlightening me with his infinite knowledge, helping me unconditionally whenever I needed and being very supportive and motivating.
	
	I am very grateful to the Department of Mathematics of Northeastern University for providing me with financial support during my graduate studies. I would also
	like to thank my fellow graduate students and friends for always being helpful and encouraging.
	

\cleardoublepage
\phantomsection
\chapter*{Dedication} 

\thispagestyle{plain}

{\sffamily
\begin{quote}\centering
Dedicated to my beloved grandma Juthika Ganguly who taught me to be patient and perseverant.\\ It is also dedicated to my parents Pranab and Jayati Banerjee whose patient support and constant encouragement has been with me all the way since the beginning of my studies.
\end{quote}
}

\thispagestyle{plain}

\cleardoublepage
\phantomsection
\addcontentsline{toc}{chapter}{Table of Contents}
\tableofcontents

\pagestyle{fancy}
\renewcommand{\chaptermark}[1]{         
  \markboth{\normalfont{\chaptername}}{}} %
\renewcommand{\sectionmark}[1]{         
  \markright{\normalfont{\thesection.\ #1}}}
\cfoot{\thepage}
\cleardoublepage
\phantomsection
\chapter*{Introduction}

Although this program has not been realized, Voevodsky has constructed a triangulated category of geometric motives over a perfect field, which has many of the properties expected of the derived category of the conjectural abelian category of motives. The construction of the triangulated category of motives has been extended by Cisinski-D\'{e}glise (\cite{cd}) to a triangulated category of motives over a base-scheme $S$. Hanamura has also constructed a triangulated category of motives over
a field, using the idea of a ``higher correspondence", with morphisms built out of Bloch's cycle complex.
Recently, Bondarko (in \cite{bondarko}) has refined Hanamura's idea and limited it to smooth projective varieties to construct a DG category of motives. Assuming resolution of singularities, the homotopy category of this DG category is equivalent to 
Voevodsky's category of effective geometric motives. 
Soon after, Levine (in \cite{smmot}) extended this idea to construct a DG category of ``smooth motives''
over a base-scheme $S$ generated by the motives of smooth projective $S$-schemes, 
where $S$ is itself smooth over a perfect field. Its homotopy category is equivalent to the full subcategory of  Cisinski-D\'{e}glise category of effective motives over $S$ generated by the smooth
projective $S$-schemes.
Both these constructions lack a tensor structure in general. However, passing to $\mathbb{Q}$-coefficients, Levine replaced the cubical construction with alternating cubes, which yields a tensor structure on his DG category.

In \cite{BeilVolo}, Beilinson and Vologodsky constructs a ``homotopy tensor structure'' on the DG catgegory of Voevodsky's effective geometric motives. That is, they constructed a ``pseudo-tensor structure'' (see Definition \ref{pstensor}) on the DG category $\mc{D}_\mc{M}$ (defined in \cite{BeilVolo}, \S~6.1), such that the corresponding pseudo-tensor structure on its homotopy category is actually a tensor structure.

In the following, we construct a pseudo-tensor structure on a  DG category $\dg Cor_S$ which induces a tensor structure on the homotopy category of DG complexes,  such  that, in case $S$ is semi-local and essentially smooth over a field of characteristic zero, it
induces a tensor structure on the category of smooth motives over $S$. Thus, we prove
\begin{thmintro}\label{int} Suppose $S$ is semi-local and essentially smooth over a field of characteristic zero. Then, 
there is a tensor structure on the category $SmMot^{\mathrm{eff}}_{gm}(S)$ of smooth effective geometric motives over $S$ making it into a tensor triangulated category. 
\end{thmintro}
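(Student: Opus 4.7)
The plan is to adapt the Beilinson--Vologodsky strategy from \cite{BeilVolo} to the relative setting over $S$, replacing the absolute DG category $\mc{D}_\mc{M}$ with Levine's DG category $\dg Cor_S$ of cubical correspondences from \cite{smmot}. The obstruction to a naive tensor structure is the usual one: the external product of cubical correspondences living on $X\times \Cube^m$ and $X'\times \Cube^n$ lands on $X\times_S X'\times \Cube^{m+n}$, but the underlying cycles need not meet all face loci properly, so composition of such external products is only defined up to a contractible choice of representative. A pseudo-tensor structure in the sense of Definition \ref{pstensor} is precisely the device that records these coherent choices without forcing them to compose strictly.

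First I would write down, for each family of sources $X_1,\dots,X_k$ and target $Y$, a complex of multi-correspondences consisting of cycles on $X_1\times_S\cdots\times_S X_k\times_S Y\times \Cube^\bt$ that are equidimensional over each partial product $\prod_{i\in I}X_i$ for $I\subseteq \{1,\dots,k\}$, modulo degenerate cubes. Composition of a multi-correspondence with an ordinary correspondence in one slot produces a multi-correspondence with one fewer source, and I would verify that this operadic composition is well-defined up to chain homotopy and satisfies the associativity, unit, and symmetry axioms required by Definition \ref{pstensor}. The key intersection-theoretic ingredient is that cycles in sufficiently generic position compose literally, with the higher coherence homotopies supplied by a moving-lemma construction.

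The main obstacle is precisely this relative moving lemma, which is where the hypotheses on $S$ become essential. Bloch's moving lemma for higher Chow groups uses translation by the automorphism group of the $\A^1$-type factors of $\Cube^\bt$ to push arbitrary cycles into good position, and its extension to cubical correspondences over a base requires enough generic symmetry of $X\times_S\Cube^\bt$ over $S$. The semi-local hypothesis guarantees that the relevant Chow groups behave well under localization and that sufficiently many global automorphisms are available; essential smoothness over a characteristic zero field provides the generic smoothness needed for the moved cycles to be flat, and hence equidimensional, over $S$. I expect to combine the moving arguments for cubical correspondences already present in \cite{smmot} with the operadic refinement of \cite{BeilVolo} to produce a coherent system of homotopies indexed by trees of iterated compositions, with the bookkeeping of these trees forming the bulk of the technical work.

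Once the pseudo-tensor structure on $\dg Cor_S$ is in place, passage to the homotopy category is essentially formal: the higher coherence data collapse to identities after applying $H^0$, yielding a symmetric monoidal structure that is compatible with the triangulated structure because external product of cycles is bi-exact on complexes. Invoking Levine's identification from \cite{smmot} of the homotopy category of $\dg Cor_S$ with $SmMot^{\mathrm{eff}}_{gm}(S)$, valid exactly under the stated hypothesis on $S$, one transports the resulting tensor structure to $SmMot^{\mathrm{eff}}_{gm}(S)$ and obtains the desired tensor triangulated structure.
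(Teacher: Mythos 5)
Your proposal identifies the correct high-level frame --- build a pseudo-tensor structure \`a la Beilinson--Vologodsky on a DG model for smooth motives and let it collapse to a genuine tensor structure on $H^0$ --- but the technical heart of your argument is not what makes the paper's proof work, and as stated it would fail or at least require a completely different development.

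The central misconception is your diagnosis of the obstruction. You write that the difficulty is that ``external product[s] of cubical correspondences\ldots need not meet all face loci properly,'' and you propose to resolve this with a relative moving lemma. But $Cor_S$ is built on \emph{finite} correspondences, whose composition is always defined (the finiteness of $W\to X$ guarantees proper intersections), so no moving lemma is needed or available in the proof. The true obstruction is combinatorial: composing two elements of $P_2$ naturally lands in a \emph{bi}-cubical complex indexed by $\square^{a}\otimes\square^{b}$, and the identification with a single-cubical complex via Eilenberg--Zilber/shuffle maps is not strictly associative. The paper resolves this not by moving cycles but by two purely formal enlargements: it replaces $dg\mc{C}$ by the extended category $\dg\mc{C}$ (Hom-complexes built from $Hom_{\mc{C}}(X\otimes\square^a, Y\otimes\square^m)$, \S\ref{subsec:ExtDGCat}), and it defines $P^{\mc{C}}_I(\{X_i\},Y)^*$ as an extended \emph{multi}-cube complex (\S\ref{subsec:ExtMultiCube}, remark~\ref{rem:PSMultiCubeIdent}). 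The composition law \eqref{eqn:DGCompLaw} is then strictly associative on the nose, and representability follows from the formal quasi-isomorphisms $\lambda_{k',k,\ext}$ (propositions~\ref{prop:ExtCubeQIso} and \ref{prop:ExtCubeQIso2}), whose proofs use only homotopy invariance of $\square^*_S$ and lemma~\ref{lem:QIsoTot}, never a moving lemma. Your tree-indexed system of coherence homotopies, and the claim that they ``collapse to identities after applying $H^0$,'' is an unargued black box precisely at the point where the paper substitutes strict multicubical associativity.

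Consequently you also misattribute the role of the hypotheses on $S$. They have nothing to do with making a moving lemma work; in fact theorem~\ref{thm:mainA} holds for any regular $S$ of finite Krull dimension. The hypotheses enter only in corollary~\ref{cor:main}, to invoke remark~\ref{rem:SLocal}: when $S$ is semi-local and essentially smooth over a characteristic-zero field, the canonical functor $\Gamma(S,\ub{dgPrCor}_S)\to R\Gamma(S,\ub{dgPrCor}_S)$ is a quasi-equivalence (via \cite[cor.~5.6]{smmot} and \cite[thm.~8.1]{FriedlanderVoevodsky}), which lets one bypass the Godement resolution. This matters because the paper does not know how to push a pseudo-tensor structure through the sheafification $R\Gamma$; that is explicitly the open obstruction stated in the Future Directions section. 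Your appeal to ``enough global automorphisms'' and ``generic smoothness'' to make moved cycles flat over $S$ addresses a problem that the proof never faces, while the problem it actually faces --- sheafification --- is not addressed at all in your outline.
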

See Theorem~\ref{thm:mainA} and corollary~\ref{cor:main} for a more precise statement.

The pseudo-tensor structure is constructed in two main steps:
\begin{itemize}
\item We show that a pseudo-tensor structure on a DG category $\mc{C}$ induces a pseudo-tensor structure on the catgeory Pre-Tr$(\mc{C})$ (defined in \ref{subsec:ptC}). Then we prove that if the pseudo-tensor structure on $\mc{C}$ induces a tensor structure on the homotopy category $H^0C$, then we have an induced tensor structure on $K^b(\mc{C})$.
\item For a tensor category $\mc{C}$, with a cubical object with comultiplication, Levine constructs a DG category $dg\mc{C}$ (see \ref{subsec:DGCubeCat}). We produce a pseudo-tensor structure on $\dg\mc{C}$ that induces a tensor structure on its homotopy category, assuming some additional technical conditions. Under these conditions, the DG categories $dg\mc{C}$ and $\dg\mc{C}$ are quasi-equivalent, so the homotopy categories of complexes are equivalent triangulated categories.
\end{itemize}

In Chapter~1, we go over some basic definitions and recall Levine's construction of the DG category of smooth effective geometric motives over $S$. In Chapter~2, we define a pseudo-tensor structure on a DG category and show when it induces a tensor structure on the homotopy category. Then, we show how it induces a pseudo-tensor
structure on the category Pre-Tr$(\mc{C})$ and on $C^b_{dg}(\mc{C})$ (defined in page~\pageref{cbdg}). Next, we show that if the pseudo-tensor structure on $\mc{C}$ induces a tensor structure on the homotopy category $H^0C$, then the induced pseudo-tensor
structure on $C^b_{dg}(\mc{C})$ gives a tensor structure on its homotopy category $K^b(\mc{C})$. We also prove the existence of a homotopy tensor structure on $\dg\mc{C}$ for a tensor category $\mc{C}$, with a cubical object with comultiplication. Finally, in Chapter~3, we give a proof of Theorem~\ref{int}.


\pagestyle{fancy}

 \renewcommand{\chaptermark}[1]{         
  \markboth{\normalfont{\chaptername\ \thechapter.\ #1}}{}} %
 \renewcommand{\sectionmark}[1]{         
  \markright{\normalfont{\thesection.\ #1}}}          %
\fancyhead{}
\fancyfoot{}
\chead{\rightmark}
\cfoot{\thepage}
\renewcommand{\headrulewidth}{0pt}


 \hfuzz1.5pc 


\cleardoublepage
\phantomsection
\chapter{DG categories of motives}

\phantomsection
\section{Preliminaries}

\phantomsection
\subsection{DG categories}
We begin by recalling some basic facts about DG categories. For a complex of abelian groups $C\in C(\Ab)$, we have the group
of cycles in degree $n$, $Z^nC$ and cohomology $H^nC$. For complexes $X,Y$,
we have the  Hom-complex
\[ \mc{H}om_{C(\mathbf{Ab})}(X,Y)^n:=\prod_p\Hom_{\mathbf{Ab}}(X^p,Y^{n+p}) \] and differential
$$d_{X,Y}(f):=d_Y\circ f
-(-1)^{\deg f}f\circ d_X.$$
Assigning the morphisms to be the group of maps of complexes
\[\Hom_{C(\Ab)}(X,Y):= Z^0\mc{H}om_{C(\mathbf{Ab})}(X,Y)^*. \]
gives us the additive category $C(\Ab)$.

$C(\Ab)$ has the {\em shift} functor $X\mapsto X[1]$ with $X[1]^n:=X^{n+1}$ and differential $d^n_{X[1]}:=-d^{n+1}_X$, and 
for a morphism $f=\{f^n:X^n\to Y^n\}, f[1]:X[1]\to Y[1]$ is defined as $f[1]^n:=f^{n+1}$. For a morphism $f:X\to Y$, we have the {\em cone} complex $\Cone(f)$ with $\Cone(f)^n:=Y^n\oplus X^{n+1}$ and differential $d:=\left(\begin{array}{cc}d_Y&f\\
0&d_{A[1]}\end{array}\right).$ and the cone sequence
\begin{equation}X\by{f}Y\by{i}\Cone(f)\by{p}X[1]. \end{equation}

Tensor product of complexes $X^*\otimes Y^*$ is defined by $(X^*\otimes Y^*)^n:=\oplus_{i+j=n}X^i\otimes Y^j$, with differential given by the Leibniz rule \[ d(a\otimes b)=da\otimes b+(-1)^{\deg a}a\otimes db. \]
The commutativity constraint $\tau_{X,Y}:X^*\otimes Y^*\to Y^*\otimes X^*$ is given by 
\[ \tau_{X,Y}(a\otimes b)=(-1)^{\deg a\cdot\deg b}b\otimes a.\]
This makes $C(\Ab)$ into a tensor category.

A {\bf pre-DG category} $\mc{C}$ is a category in which, for objects $X,Y$, one has the {\em Hom complex}
\[ \mc{H}om_\mc{C}(X,Y)^*\in C(\Ab) \]
and for objects $X, Y,Z$ in $\mc{C}$, one has the composition law
\[ \circ_{X,Y,Z}:\mc{H}om_\mc{C}(Y,Z)^*\otimes\mc{H}om_\mc{C}(X,Y)^*\to\mc{H}om_\mc{C}(X,Z)^*. \]
The map $\circ_{X,Y,Z}$ is a map of complexes, that is, we have the Leibniz rule,
\[ d(f\circ g)=df\circ g +(-1)^{\deg f}f\circ dg. \]
Also, the map $\circ_{X,Y,Z}$ is associative and there is an identity element $\id_X\in Z^0\mc{H}om_\mc{C}(X,X)^0$. 
A {\bf DG category} is a pre-DG category admitting finite direct sums. If $\mc{C}$ is a (pre-)DG category, we have the {\em opposite} (pre-)DG category $\mc{C}^{op}$ with 
\[
\mc{H}om_{\mc{C}^{op}}(X,Y)^n:=\mc{H}om_\mc{C}(Y,X)^{-n}.
\]
Letting $f^{op}:X\to Y$ be the morphism in $\mc{C}^{op}$ corresponding to a morphism $f:Y\to X$ in $\mc{C}$, the  differential $d^{op}$ is given by
\[
d^{op}(f^{op})=(df)^{op}
\]
and the composition law is
\[
f^{op}\circ g^{op}:=(-1)^{\deg f\cdot\deg g}(g\circ f)^{op}.
\]
We have the DG category $C_{dg}(\Ab)$ with Hom-complexes as defined above and composition law induced by the composition law in $\Ab$.

A {\em DG functor} $F:\mc{A}\to\mc{B}$ is a functor of the underlying pre-additive categories such that the map $F:\mc{H}om_\mc{A}(X,Y)^*\to\mc{H}om_\mc{B}(F(X),F(Y))^*$ is a map of complexes for each pair of objects $X,Y$ in $\mc{A}$.

Suppose that   $\mc{A}$ and $\mc{B}$ are small DG categories.
 A {\em degree $n$ natural transformation} of DG functors $F,G:\mc{A}\to\mc{B}$,
\[
\vartheta:F\to G,
\]
is a collection of elements
\[
\vartheta(X)\in \mc{H}om_\mc{B}(F(X),G(X))^n;\ X\in \mc{A}
\]
such that, for each $f\in \mc{H}om_\mc{A}(X,Y)^m$, one has
\[
G(f)\circ\vartheta(X)=(-1)^{mn}\vartheta(Y)\circ F(f).
\]
For $\vartheta:F\to G$ a degree $n$ natural transformation, we have the degree $n+1$ natural transformation $d\vartheta:F\to G$ with
\[
d\vartheta(X):=d(\vartheta(X));\ X\in \mc{A},
\]
giving us the complex 
\[
\mc{N}at(F,G)^*:= \mc{H}om_{DGFun(\mc{A},\mc{B})}(F,G)^*
\]
of natural transformations. The composition in $\mc{B}$ induces a composition law 
\[
\mc{N}at(G,H)^*\otimes \mc{N}at(F,G)^*\to \mc{N}at(F,H)^*
\]
giving us the DG category of DG functors $DGFun(\mc{A},\mc{B})$.

For a DG category $\mc{C}$, one has the additive categories $Z^0\mc{C}$ and $H^0\mc{C}$ with the same objects as $\mc{C}$ but with morphisms
\[
\Hom_{Z^0\mc{C}}(X,Y):=Z^0\mc{H}om_\mc{C}(X,Y)^*\]
and
\[
\Hom_{H^0\mc{C}}(X,Y):=H^0\mc{H}om_\mc{C}(X,Y)^*.\]

A DG functor $F:\mc{A}\to\mc{B}$ induces functors of additive categories $Z^0F:Z^0\mc{A}\to Z^0\mc{B}$ and $H^0F:H^0\mc{A}\to H^0\mc{B}$. This makes $Z^0$ and $H^0$ 
into functors from DG categories to additive categories.

\phantomsection
\subsection{The category Pre-Tr$(\mc{C})$ of DG complexes}\label{subsec:ptC}
\begin{dfn} \label{Pretr} Let $\mc{C}$ be a DG category. The DG category Pre-Tr$(\mc{C})$ has objects $\mc{E}$ consisting of the following data:
\begin{enumerate}\item A finite collection of objects of $\mc{C}$, $\{E_i,N\leq i\leq M\}$ ($N$ and $M$ depending on $\mathcal{E}$).
\item Morphisms $e_{ij}:E_j\to E_i$ in $\mc{C}$ of degree $j-i+1$, $N\leq i,j\leq M$, satisfying
\[ (-1)^ide_{ij}+\sum_ke_{ik}e_{kj}=0.\]\end{enumerate}
For objects $\mc{E}:=\{E_i,e_{ij}\},\mc{F}:=\{F_i,f_{ij}\}$, a morphism $\phi:\mc{E}\to\mc{F}$ of degree $n$ is a collection of morphisms 
$\phi_{ij}:E_j\to E_i$ of degree $n+j-i$ in $\mc{C}$. Composition of morphisms $\phi:\mc{E}\to\mc{F},\,\psi:\mc{F}\to\mc{G}$ is defined by
\[ (\psi\circ\phi)_{ij}:=\sum_k\psi_{ik}\circ\phi_{kj}.\]

Given a morphism $\phi:\mc{E}\to\mc{F}$ of degree $n$, define $\del_{\mc{F},\mc{E}}(\phi)\in\Hom_{\text{Pre-Tr}(\mc{C})}(\mc{E},\mc{F})^{n+1}$ as the collection
\[\del_{\mc{F},\mc{E}}(\phi)_{ij}:=(-1)^id(\phi_ij)+\sum_kf_{ik}\phi_{kj}-(-1)^n\sum_k\phi_{ik}e_{kj}. \]
\end{dfn}

For an object $\mathcal{E}$ in Pre-Tr$(\mathcal{C})$ and $n\in\mathbb{Z}$, we define an object $\mathcal{E}[n]$ by 
\[(\mathcal{E}[n])_i:={E}_{n+i},\quad e[n]_{ij}:=(-1)^ne_{n+i,n+j}:(\mathcal{E}[n])_j\ra (\mathcal{E}[n])_i.\]
Let $\mathcal{E},\mathcal{F}$ be objects in Pre-Tr$(\mathcal{C})$ and $\psi\in Z^0Hom_{\text{Pre-Tr}(\mathcal{C})}(\mathcal{E},\mathcal{F})$. We define an object $Cone(\psi)$ in Pre-Tr$(\mathcal{C})$ as 
\[ Cone(\psi)_i:= F_i\oplus E[1]_{i} \quad cone(\psi)_{ij}:=\left(\begin{array}{cc}f_{ij}& \psi_{i,j+1}\\ 0& e[1]_{ij}\end{array}\right):Cone(\psi)_j\ra
Cone(\psi)_i\]
We have the {\em cone sequence} of morphisms in $Z^0$Pre-Tr$(\mathcal{C})$:
\[
\mathcal{E}\xrightarrow{\psi}\mathcal{F}\xrightarrow{i}Cone(\psi)\xrightarrow{p}\mathcal{E}[1]
\]
with $i$ and $p$ the evident inclusion and projection.

\begin{prop}[\cite{bondarko}, Proposition 2.2.3\ ]
Let Tr$(\mc{C}):=$ $H^0\text{Pre-Tr}(\mathcal{C})$ be the homotopy category of Pre-Tr$(\mathcal{C})$. Then, Tr$(\mc{C})$ is a triangulated category, with translation functor induced by the translation defined above, and with distinguished triangles, those triangles which are isomorphic to the image of a cone sequence.
\end{prop}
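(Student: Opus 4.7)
The plan is to verify the standard axioms TR1--TR4 of a triangulated category for Tr$(\mc{C})$, following the classical proof that the homotopy category $K(\mc{A})$ of complexes in an additive category is triangulated. The key observation is that although the objects of Pre-Tr$(\mc{C})$ are twisted complexes (the $e_{ij}$ with $|j-i|\geq 2$ encode higher homotopy data beyond a strict differential), the shift functor $[1]$ and the $Cone$ construction are formally identical to the classical ones, so the classical arguments carry over verbatim once the sign conventions are fixed. This is precisely Bondal--Kapranov's construction of the pretriangulated envelope of a DG category.

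Axioms TR1 and TR3 require only routine checks. The shift $[1]$ is manifestly invertible on $Z^0\text{Pre-Tr}(\mc{C})$, with inverse $[-1]$; the trivial triangle $\mc{E}\by{\id}\mc{E}\to 0\to\mc{E}[1]$ is distinguished because $Cone(\id_\mc{E})$ is contractible via an explicit degree $-1$ homotopy; and every morphism admits a cone by construction. For TR3, a morphism of cones extending a given commutative square is assembled directly in matrix form, using the block decomposition $Cone(\psi)_i = F_i\oplus E[1]_i$ together with the fact that the given homotopy between the two compositions in the square provides the off-diagonal entry.

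The substantive content is in TR2 (rotation) and TR4 (the octahedral axiom). For TR2 one must exhibit an isomorphism in Tr$(\mc{C})$ between $Cone(i\colon\mc{F}\to Cone(\psi))$ and $\mc{E}[1]$ compatible with the canonical maps, and likewise in the other direction; this is done by writing down the natural projection together with a homotopy inverse, the composition differing from the identity by the $\del_{-,-}$ of an explicit degree $-1$ endomorphism that one reads off from the block form. TR4 is the main obstacle: given composable morphisms $\psi\colon\mc{E}\to\mc{F}$ and $\phi\colon\mc{F}\to\mc{G}$, one must assemble a distinguished triangle $Cone(\psi)\to Cone(\phi\circ\psi)\to Cone(\phi)\to Cone(\psi)[1]$, which requires careful bookkeeping of the matrix entries and of the signs $(-1)^i$ and $(-1)^n$ appearing in Definition~\ref{Pretr}. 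This is exactly the calculation carried out in \cite{bondarko}, Proposition~2.2.3, and no new idea beyond the pretriangulated-envelope formalism is required; accordingly I would simply cite that result rather than reproduce the verification here.
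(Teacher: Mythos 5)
Your proposal is correct and takes essentially the same route as the paper: the paper itself gives no independent argument and simply cites Bondarko, Proposition~2.2.3, exactly as you do after outlining the standard Bondal--Kapranov verification of TR1--TR4 for the pretriangulated envelope of a DG category. Your sketch of where the substance lies (TR2 via a contracting homotopy on the cone of the inclusion, TR4 via the block-matrix octahedron, TR3 using the given homotopy as the off-diagonal entry) is consistent with what Bondarko actually carries out, so citing that result is appropriate.
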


We have an inclusion $i_0:\mathcal{C}\hookrightarrow$ Pre-Tr$(\mathcal{C})$ given by $i_0(E)={\{E_0=E;e_{00}=0\}}$. Let $\mathcal{C}^\pt$ be the
smallest
 full subcategory of Pre-Tr$(\mathcal{C})$ 
 containing
 $i_0(\mathcal{C})$ and 
closed under
 taking translations and cones. This gives a fully faithful embedding $i:\mathcal{C}\ra
\mathcal{C}^\pt$, since \[Hom_{\mathcal{C}^\pt}(i(E),i(F))^*= Hom_{\mathcal{C}}(E,F)^*\]
Clearly $H^0\mathcal{C}^\pt$ is a full triangulated subcategory of $H^0\text{ Pre-Tr}(\mathcal{C})$.

\phantomsection
\subsection{DG categories and triangulated categories}
In this section, we recall some basic constructions involving DG categories and triangulated categories.

Let $\mathcal{C}$ be a DG category.
\begin{dfn} For a DG category $\mathcal{C}$, we define a $\mathcal{C}$-module to be a
DG
 functor $M:\mathcal{C}\ra C_{dg}(\mathbf{Ab})$. We denote by 
$C_{dg}(\mathcal{C})$ the DG category of $\mathcal{C}^{op}$-modules:
\[
C_{dg}(\mathcal{C}):=DGFun(\mc{C}^{op},C_{dg}(\Ab)).
\]
\end{dfn}
The operations of translation and cones in $C_{dg}(\mathbf{Ab})$ induce these operations in the functor category $C_{dg}(\mathcal{C})$:
\[
M[1](E):=M(E)[1]
\]
and for $\psi:M\to N$ a morphism in $Z^0C_{dg}(\mathcal{C})$, 
\[
Cone(\psi)(E):=Cone(\psi(E))
\]
Similarly, we have the cone sequence
\[
M\xrightarrow{\psi} N\xrightarrow{i}Cone(\psi)\xrightarrow{j}M[1]
\]
with value at $E\in \mathcal{C}$ the cone sequence
\[
M(E)\xrightarrow{\psi(E)} N(E)\xrightarrow{i}Cone(\psi(E))\xrightarrow{j}M(E)[1]
\]
in $Z^0C_{dg}(\mathbf{Ab})$.

Let $K(\mathcal{C}):=H^0C_{dg}(\mathcal{C})$ be the homotopy category. With translation functor induced from that of $C_{dg}(\mathcal{C})$ and distinguished triangles those sequences isomorphic to the image of a cone sequence, $K(\mathcal{C})$ becomes a triangulated category (see \cite{keller}, \S 2.2\ ).

Let $AcK(\mathcal{C})\subset K(\mathcal{C})$ be the full subcategory with objects the functors $M$ such that $M(A)$ is acyclic for all $A\in \mathcal{C}$, that is, $H^p(M(A))=0$ for all $p$. It is easy to see that $AcK(\mathcal{C})$ is a thick subcategory of $K(\mathcal{C})$. 
\begin{dfn}[\cite{keller}, \S 4.1] The derived category $D(\mathcal{C})$ is the localization of $K(\mathcal{C})$ with respect to quasi-isomorphisms, that is,
\[
D(\mathcal{C})=K(\mathcal{C})/AcK(\mathcal{C}).
\]\end{dfn}

A {\em representable} $\mathcal{C}^{op}$-module is a functor 
$A^\vee:=Hom_\mathcal{C}(\blank,A)$ for some object $A\in\mathcal{C}$. 
Sending $A$ to $A^\vee$ defines a fully faithful embedding of DG categories
\[
\mathcal{C}\to C_{dg}(\mathcal{C}).
\]
Let $C^b_{dg}(\mathcal{C})$\label{cbdg} be the smallest full DG subcategory of $C_{dg}(\mathcal{C})$, containing all the objects $A^\vee$ and closed under translations
and cones. Let $K^b(\mathcal{C}):=H^0C^b_{dg}(\mathcal{C})$ be its homotopy category,
giving us a full triangulated subcategory of  $K(\mathcal{C})$. We let $K^b(\mathcal{C})^{ess}\subset 
K(\mathcal{C})$ be the full subcategory of $K(\mathcal{C})$ with objects those objects of $K(\mathcal{C})$ which are isomorphic to an object of $K^b(\mathcal{C})$. Clearly $K^b(\mathcal{C})^{ess}$ is a full triangulated subcategory of $K^b(\mathcal{C})$ and the inclusion
\[
K^b(\mathcal{C})\to K^b(\mathcal{C})^{ess}
\]
is an equivalence of triangulated categories.

We have a fully faithful embedding $i^\pt: \mathcal{C}^\pt \ra 
C_{dg}(\mathcal{C})$ given by
 \[ \mathcal{E} \mapsto Hom_{\mathcal{C}^\pt}(i(\blank),\mathcal{E})^* \]
The composition $i^\pt\circ i$ is the functor $A\mapsto A^\vee$.

\begin{prop}[\cite{smmot}, Prop.~2.12] 1. The embedding $i^\pt$ induces an 
equivalence
of DG categories 
\[ \phi : \mathcal{C}^\pt\longrightarrow C^b_{dg}(\mathcal{C}) \]
and an equivalence of triangulated categories
\[ H^0\phi :H^0 \mathcal{C}^\pt\longrightarrow K^b_{dg}(\mathcal{C}) \]
2. The evident functor 
\[
K^b_{dg}(\mathcal{C})\to D(\mathcal{C})
\]
is a fully faithful embedding.
\end{prop}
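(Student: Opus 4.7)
The plan is to treat the two parts in turn, both leaning on the DG Yoneda lemma. For part 1, I would first invoke DG Yoneda: for any $\mathcal{C}^{op}$-module $M$ and $A\in\mathcal{C}$, evaluation at $\id_A$ gives a natural isomorphism of complexes $\mc{N}at(A^\vee, M) \iso M(A)$. Applying this with $M=i^\pt(\mathcal{F})$ yields
\[
\mc{N}at\bigl(i^\pt(i(A)),\, i^\pt(\mathcal{F})\bigr) \cong \mc{H}om_{\mathcal{C}^\pt}\bigl(i(A),\mathcal{F}\bigr),
\]
so $i^\pt$ is fully faithful on source objects of the form $i(A)$. Next I would verify, by direct computation from the definitions, that $i^\pt$ commutes up to natural isomorphism with translation and cone formation in Pre-Tr$(\mathcal{C})$ and $C_{dg}(\mathcal{C})$; this is essentially formal since $\mc{H}om_{\mathcal{C}^\pt}(i(\blank),-)$ is an additive DG functor into $C_{dg}(\Ab)$. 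An induction over the construction of $\mathcal{C}^\pt$ out of $i(\mathcal{C})$ then extends full faithfulness from generators to all objects, using the five-lemma on the long exact cohomology sequences attached to a cone triangle in source and target.

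For essential surjectivity onto $C^b_{dg}(\mathcal{C})$, the image of $i^\pt\circ i$ is the collection of representable modules $A^\vee$, and since $i^\pt$ is closed (up to isomorphism) under shifts and cones, its essential image contains the smallest full DG subcategory of $C_{dg}(\mathcal{C})$ closed under these operations and containing all $A^\vee$, which is $C^b_{dg}(\mathcal{C})$ by definition. Thus $\phi$ is a DG equivalence; applying $H^0$ yields an equivalence of triangulated categories, the triangulated structures matching because on both sides distinguished triangles are by construction (isomorphism classes of) images of cone sequences.

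For part 2, I would show that every representable $A^\vee$ is $h$-projective: DG Yoneda gives $\Hom_{K(\mathcal{C})}(A^\vee, N) = H^0 N(A) = 0$ for $N\in AcK(\mathcal{C})$. Hence any quasi-isomorphism $N\to M$ induces an isomorphism on $\Hom_{K(\mathcal{C})}(A^\vee,-)$, which in turn forces the localization map
\[
\Hom_{K(\mathcal{C})}(A^\vee, M) \longrightarrow \Hom_{D(\mathcal{C})}(A^\vee, M)
\]
to be an isomorphism for every $M$. To extend this from $A^\vee$ to arbitrary objects of $K^b_{dg}(\mathcal{C})$, I again induct on the generating description of $C^b_{dg}(\mathcal{C})$: both sides are cohomological in the first variable, and the five-lemma on long exact Hom-sequences from cone triangles promotes the isomorphism through cones and shifts.

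The main obstacle is the inductive step shared by both parts: one must verify cleanly that each of the DG functors in play ($i^\pt$ and the canonical functor $C^b_{dg}(\mathcal{C})\to C_{dg}(\mathcal{C})\to D(\mathcal{C})$) sends cone sequences to cone sequences, so that the long exact Hom-sequences line up and the five-lemma applies. Once this compatibility is pinned down, the base case is simply DG Yoneda, and the two parts close up in parallel.
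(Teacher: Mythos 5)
The paper does not include its own proof of this proposition; it is cited to \cite{smmot}, Prop.~2.12. Your argument is correct in outline and is the natural route (DG Yoneda plus an induction over the generating description of $\mathcal{C}^\pt$), but there is one imprecision in part~1 worth flagging.

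The proposition asserts an \emph{equivalence of DG categories}, which in this paper's usage means an isomorphism of Hom-complexes, not merely a quasi-isomorphism (the weaker notion is separately named ``quasi-equivalence'' a few paragraphs later). Your inductive step, phrased via ``the five-lemma on the long exact cohomology sequences attached to a cone triangle,'' only delivers that the Hom-complex map is a \emph{quasi}-isomorphism — which would prove $\phi$ is a quasi-equivalence, not a strict DG equivalence. The stronger conclusion does hold, but the right argument bypasses the five-lemma entirely: both $\mc{H}om_{\mathcal{C}^\pt}(-,\mathcal{F})^*$ and $\mc{N}at\bigl(i^\pt(-),i^\pt(\mathcal{F})\bigr)^*$ carry the cone sequence $\mathcal{E}'\to\mathcal{E}''\to\Cone(\psi)$ to a \emph{strict} cone of complexes (since $\Cone(\psi)$ is $\mathcal{E}''\oplus\mathcal{E}'[1]$ with twisted differential and Hom into a fixed target is additive), so once the maps on $\mathcal{E}'$ and $\mathcal{E}''$ are isomorphisms the induced map of cones is an isomorphism as well. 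The base case from DG Yoneda already gives a genuine isomorphism $\mc{H}om_{\mathcal{C}^\pt}(i(A),\mathcal{F})^*\cong\mc{N}at(A^\vee,i^\pt(\mathcal{F}))^*$, so the induction stays strict throughout, and you then only need to apply $H^0$ at the end to read off the triangulated equivalence.

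Part~2 is fine as written: there the objects of interest are abelian groups $\Hom_{K(\mathcal{C})}(X,M)$ and $\Hom_{D(\mathcal{C})}(X,M)$, both sides are cohomological in $X$, the localization functor $K(\mathcal{C})\to D(\mathcal{C})$ is exact, and DG Yoneda plus acyclicity gives the base case $\Hom_{K(\mathcal{C})}(A^\vee,N)=H^0N(A)=0$ for $N$ acyclic. Here the five-lemma on the long exact Hom-sequences is exactly the right tool, and your argument closes correctly.
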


Recall that a {\em quasi-equivalence} of DG categories is a DG functor
\[
F:\mc{A}\to \mc{B}
\]
which is surjective on isomorphism classes and for each pair of objects $X, Y\in\mc{A}$, the map
\[
F_{A,B}:Hom_{\mc{A}}(X,Y)^*\to Hom_{\mc{B}}(F(X), F(Y))
\]
is a quasi-isomorphism. Clearly, a quasi-equivalence induces an equivalence on the homotopy categories
\[
H^0F:H^0\mc{A}\to H^0\mc{B}.
\]

\begin{prop}[\hbox{\cite[proposition 3.2]{Toen}} see also \hbox{\cite[theorem 2.14]{smmot}}]\label{prop:TriangQIso} A quasi-equivalence $F:\mc{A}\to \mc{B}$ induces an equivalence of triangulated categories
\[
K^b_{dg}(F):K^b_{dg}(\mc{A})\to K^b_{dg}(\mc{B}).
\]
\end{prop}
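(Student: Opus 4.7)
The plan is to reduce the claim to a statement about the pre-triangulated closures $\mc{A}^\pt$ and $\mc{B}^\pt$, via the equivalences $H^0\phi : H^0\mc{C}^\pt \iso K^b_{dg}(\mc{C})$ of the preceding proposition. A DG functor $F:\mc{A}\to\mc{B}$ extends canonically to a DG functor $F^\pt:\mc{A}^\pt\to\mc{B}^\pt$ by applying $F$ object-wise to data $(E_i, e_{ij})$: the relation $(-1)^i d(e_{ij}) + \sum_k e_{ik}e_{kj}=0$ is preserved by any DG functor, and so are shifts and cone sequences. Under the equivalence, $F^\pt$ corresponds to $K^b_{dg}(F)$, so it suffices to show $H^0 F^\pt$ is an equivalence of triangulated categories.

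I would first prove that $H^0 F^\pt$ is fully faithful. For objects $\mc{E}=\{E_i,e_{ij}\}$ and $\mc{F}=\{F_i,f_{ij}\}$, the Hom complex in $\mc{A}^\pt$ is the total complex of a bounded bicomplex
\[
\Hom_{\mc{A}^\pt}(\mc{E},\mc{F})^n = \bigoplus_{i,j}\Hom_{\mc{A}}(E_j,F_i)^{\,n+j-i},
\]
whose internal differential is the $\mc{A}$-differential and whose ``outer'' differential is built from pre- and post-composition with the $e_{kj}$ and $f_{ik}$. The map induced by $F^\pt$ respects this bigrading and restricts on each summand to the map $F$ induces on $\Hom_\mc{A}(E_j,F_i)^*$, which is a quasi-isomorphism by hypothesis. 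Filtering (say by the index $j$) gives a finite filtration whose associated graded is a direct sum of such component maps; an easy induction on the length of the filtration via the five-lemma on the long exact sequences of the filtration quotients shows that $F^\pt$ is a quasi-isomorphism on Hom complexes. Passing to $H^0$ gives fully-faithfulness.

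For essential surjectivity, observe that $H^0 F^\pt$ is a triangulated functor, since $F^\pt$ sends the cone sequence of $\psi$ to the cone sequence of $F^\pt(\psi)$ on the nose and commutes with the shift. Hence the essential image of $H^0 F^\pt$ is a triangulated subcategory of $H^0\mc{B}^\pt$. The quasi-equivalence hypothesis provides, for every $B \in \mc{B}$, an object $A\in\mc{A}$ and an isomorphism $F(A)\iso B$ in $H^0\mc{B}$, which lifts to an isomorphism $F^\pt(i_0 A) \iso i_0 B$ in $H^0\mc{B}^\pt$. Thus the essential image contains all $i_0(B)$. Since $\mc{B}^\pt$ is by definition the smallest full DG subcategory of Pre-Tr$(\mc{B})$ containing $i_0(\mc{B})$ and closed under shifts and cones, $H^0\mc{B}^\pt$ is generated, as a triangulated subcategory of $H^0\,$Pre-Tr$(\mc{B})$, by $i_0(\mc{B})$. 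Hence the essential image is all of $H^0\mc{B}^\pt$.

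The main obstacle is the fully-faithfulness step: one must keep careful track of the signs and the bigraded differential in the Hom complex of Pre-Tr$(\mc{C})$ in order to verify that the natural filtration is preserved by $F^\pt$ and that the associated graded map is the anticipated direct sum of components, so that the quasi-equivalence hypothesis applies summand by summand. Once this is done, assembling the pieces via the equivalences $H^0\phi_\mc{A}$ and $H^0\phi_\mc{B}$ yields the desired triangulated equivalence $K^b_{dg}(F)$.
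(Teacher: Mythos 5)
The paper does not actually prove this proposition; it is quoted from T\"oen \cite[prop.\ 3.2]{Toen} and Levine \cite[thm.\ 2.14]{smmot} as a known result, so there is no ``paper's own proof'' to compare against. On its own merits, your overall architecture is the standard one and is sound: reduce to the extended functor $F^\pt$ via the equivalence $H^0\phi$, show $H^0F^\pt$ is fully faithful by showing $F^\pt$ is a quasi-isomorphism on Hom-complexes, and get essential surjectivity from the fact that $\mc{B}^\pt$ is generated by $i_0(\mc{B})$ under shifts and cones together with exactness of $H^0F^\pt$.

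The one step that needs more care is the fully-faithfulness argument. You propose to filter $\Hom_{\text{Pre-Tr}(\mc{A})}(\mc{E},\mc{F})^*$ by the second index $j$ and claim the associated graded is a direct sum of the maps $F:\Hom_\mc{A}(E_j,F_i)^*\to\Hom_\mc{B}(FE_j,FF_i)^*$. For that filtration to be preserved by $\del_{\mc{F},\mc{E}}$, the term $\sum_k\phi_{ik}e_{kj}$ must strictly move $j$ in one direction; this holds precisely when the data $e_{ij}$ are one-sided (say $e_{ij}=0$ for $j\ge i$), a condition the paper's Definition 1.5 of Pre-Tr does not impose, and which is not literally preserved by the cone construction because the entry $\psi_{i,j+1}$ in $\Cone(\psi)$ need not vanish for $j\ge i$. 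So ``the natural filtration'' is not the na\"ive one by the index $j$; it is the filtration recording the iterated-cone structure of the object. A cleaner way to close the gap, which sidesteps the bookkeeping entirely and matches the way $\mc{C}^\pt$ is actually defined, is to argue by induction on the number of cone and shift operations used to build $\mc{E}$ and $\mc{F}$ from $i_0(\mc{A})$: the base case $\Hom(i_0A,i_0A')=\Hom_\mc{A}(A,A')$ is the hypothesis, shifts are immediate, and for cones one uses that $\Hom(\blank,\Cone(\psi))$ and $\Hom(\Cone(\psi),\blank)$ sit in cone sequences of Hom-complexes, together with the five lemma. This is also the same pattern you already use for essential surjectivity, so the whole proof becomes a single induction over $\mc{C}^\pt$.
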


\phantomsection
\subsection{Sheafification of DG categories}\label{subsec:sheaf} 
Let $n\mapsto A^n$, $n\mapsto B^n$ be cosimplicial abelian groups, giving us the diagonal cosimplicial abelian group $n\mapsto A^n\otimes B^n$, which we denote as $n\mapsto (A\otimes B)^n$. The {\em Alexander-Whitney map}
is a quasi-isomorphism of complexes
\[
AW:(A^*,d)\otimes(B^*,d)\to ((A\otimes B)^*,d)
\]
and is defined as follows: for each pair $(p,q)$, let $\delta^1_{p,p+q}:[p]\to [p+q]$ and $\delta^2_{q,p+q}:[q]\to [p+q]$ be the maps
\[
\delta^1_{p,p+q}(i)=i,\quad \delta^2_{q,p+q}(j)=p+j.
\]
The sum of the maps 
\[
A(\delta^1_{p,p+q})\otimes B(\delta^2_{q,p+q}):A^p\otimes B^q\to A^{p+q}\otimes B^{p+q}
\]
is easily seen to give the desired map of complexes  $AW$. We will not need the fact that $AW$ is a quasi-isomorphism.
See \cite{weibel}, \S 8.5.4 for more details.

If now $n\mapsto (A^*)^n$ is a cosimplicial object in $C(\Ab)$ (a cosimplicial complex), the cosimplicial structure gives a double complex $A^{**}$ with second differential coming from the cosimplicial structure, and the associated total complex $\Tot(A^{**})$. Given two cosimplicial complexes $n\mapsto (A^*)^n$, $n\mapsto (B^*)^n$, the AW map gives a map
\begin{equation}\label{eqn:CosimpAW}
AW_{A,B}:\Tot(A^{**})\otimes \Tot(B^{**})\to \Tot((A\otimes B)^*)^*),
\end{equation}
however, one needs to introduce a sign: for $a\in (A^p)^n$, $b\in (B^q)^m$
\[
AW_{A, B}(a\otimes b)=(-1)^{qn}AW_{A^p, B^q}(a\otimes b).
\]

Following \cite{smmot}, we use the Godement resolution to give a good global model for a sheaf of DG categories.   In particular, for a complex of sheaves $\mc{F}^*$ on a topological space $X$, we have the cosimplicial Godement resolution
\[
n\mapsto G^*(\mc{F}^*)
\]
and the associated total complex of sheaves on $X$ $\Tot(G^*(\mc{F}^*))$.

Given sheaves $\mc{F}_1$, $\mc{F}_2$ on $X$, the naturality of the Godement resolution gives us a canonical map of cosimplicial sheaves
\[
n\mapsto \cup^n(\mc{F}_1,\mc{F}_2): G^n(\mc{F}_1)\otimes G^n(\mc{F}_2)\to G^n(\mc{F}_1\otimes \mc{F}_2).
\]
For complexes of sheaves $\mc{F}_1^*$, $\mc{F}_2^*$, one extends this to a map of cosimplicial complexes by introducing the appropriate signs:
\begin{align*}
&\cup^n(\mc{F}_1^*,\mc{F}_2^*)^{a,b}: G^n(\mc{F}_1^a)\otimes G^n(\mc{F}_2^b)\to G^n(\mc{F}_1^a\otimes \mc{F}_2^b)\\
&\cup^n(\mc{F}_1^*,\mc{F}_2^*)^{a,b}:=(-1)^{na}\cup^n(\mc{F}_1^a,\mc{F}_2^b)
\end{align*}
Composing with the signed Alexander-Whitney map \eqref{eqn:CosimpAW} gives us the map of complexes
\begin{equation}\label{eqn:AWMap1}
AW:\Tot(G^*(\mc{F}_1^*))\otimes \Tot(G^*(\mc{F}_2^*))\to \Tot(G^*(\mc{F}_1^*\otimes \mc{F}_2^*)).
\end{equation}

\phantomsection
\section{Cubical enrichments in DG categories}
\phantomsection
\subsection{Cubical categories}
We first define the ``cubical category'' $\mathbf{Cube}$. It is a subcategory of $\mathbf{Sets}$ with objects $\underline{n}:=\{0,1\}^n$, $n\geq 0$ an integer, and morphisms generated by 
\begin{enumerate}
\item Inclusions: $\eta_{n,i,\epsilon}: \underline{n}\ra \underline{n+1}, \epsilon\in \{0,1\}, 1\leq i \leq n+1$,
\[ \eta_{n,i,\epsilon}(y_1,\ldots,y_n)= (y_1,\ldots,y_{i-1},\epsilon,y_i,\ldots,y_n) \]
\item Projections: $p_{n,i}:\underline{n}\ra\underline{n-1}, 1\leq i \leq n$,
\[ p_{n,i}(y_1,\ldots,y_n)= (y_1,\ldots,y_{i-1},y_{i+1},\ldots,y_n) \]
\item Permutation of factors: $(y_1,\ldots,y_n)\mapsto (y_{\sigma(1)},\ldots,y_{\sigma(1)})$ for $\sigma\in S_n$.
\item Involutions: $\tau_{n,i}:\underline{n}\ra\underline{n}$ exchanging $0$ and $1$ in the $i^{\mathrm{th}}$ factor.
\end{enumerate}

\begin{dfn} For a category $\mathcal{C}$, we call a functor $F:\mathbf{Cube}^{op}\ra \mathcal{C}$ a {\em cubical object} of $\mathcal{C}$ and a functor 
$G:\mathbf{Cube}\ra\mathcal{C}$ a {\em co-cubical object} of $\mathcal{C}$. \end{dfn}

\begin{rem} 1. Defining a morphism of (co)-cubical objects to be a natural transformation gives us the category of (co) cubical objects in a (small) category $\mc{C}$.\\
\\
2. Replacing $\Cube$ with the $k$-fold product category $\Cube^k$ gives us the categories of $k$-cubical objects in $\mc{C}$ and the category of $k$-co-cubical objects in $\mc{C}$.\\
\\
3. The product of sets makes $\mathbf{Sets}$ a symmetric monoidal category of which $\mathbf{Cube}$ is a symmetric monoidal subcategory.
\end{rem}

\phantomsection
\subsection{Cubes and complexes}

Let $\mathcal{A}$ be a pseudo-abelian category and $\underline{A}:\mathbf{Cube}^{op}\ra \mathcal{A}$ a cubical object.  Let $(\ub{A}_*,d)$ be the complex with $\ub{A}_n:=\ub{A}(\ub{n})$ and
\[ d_n:=\sum^{n}_{i=1}(-1)^{i}(\eta^*_{n,i,1}-\eta^*_{n,i,0}):\ub{A}_{n+1}\ra\ub{A}_n. \]

For $\epsilon\in\{0,1\}$, define 
$\pi^\epsilon_{n,i}:= p^*_{n,i}\circ \eta^*_{n,i,\epsilon}: \ub{A}(\ub{n})\ra\ub{A}(\ub{n})$ and let
\[
\pi_{n,m} := (id-\pi^1_{n,m})\circ\cdots\circ (id-\pi^1_{n,1}) 
\]
Note that $\pi^\epsilon_{n,i}$ are commuting idempotents, and that $(id-\pi^\epsilon_{n,i})(\ub{A}(\ub{n}))\subset \ker\eta^*_{n-1,i,\epsilon}$ since 
$p_{n,i}\circ\eta_{n-1,i,\epsilon}=id$.

Let $I=(i_1,\ldots, i_m)$ be an $m$ tuple of integers $1\le i_1<\ldots<i_m\le n$, and let 
\[
p_{n,I}:\un{n}\to\un{m}
\]
be the corresponding projection. Let
\[
\eta_{n,I}:\un{m}\to \un{n}
\]
be the inclusion defined by 
\begin{align*}
&\eta_{m,I}(\epsilon_1,\ldots, \epsilon_m)=(\eta_{m,I}(\epsilon_*)_1,\ldots,\eta_{m,I}(\epsilon_*)_n)
&\eta_{m,I}(\epsilon_*)_i=\begin{cases} \epsilon_j&\ \text{ if }i=i_j\text{ for some }j\\0&\ \text{ else.}\end{cases}
\end{align*}
The collection of maps $\{p_{n,I}\}$ for fixed $n$ form an $n$-cube of maps, which is compatibly split by the maps $\{\eta_{n,I}\}$, in the sense of \cite[\S 5.6]{GeisserLevine}. The following result follows directly from \cite[proposition 5.7]{GeisserLevine}.

\begin{lem}\label{cub} Let $\ub{A}:\mathbf{Cube}^{op}\ra\mathcal{A}$ be a cubical object in a pseudo-abelian category $\mathcal{A}$. \\
\\
1. For $n\ge 1$ and $1\le m\le n$, there are well  well-defined objects
\begin{eqnarray*}
\ub{A}_{n,m}^0&:=&\cap^m_{i=1}\ker\eta^*_{n-1,i,1}\subset \ub{A}(\ub{n}),\\
\ub{A}^{\mathrm{degn}}_{n,m}&:=& \sum_{i=1}^mp^*_{n,i}(\ub{A}(\ub{n-1}))\subset \ub{A}(\ub{n}).
\end{eqnarray*}
We set $\ub{A}_n^0:=\ub{A}^0_{n,n}$, $\ub{A}_n^{\mathrm{degn}}:=\ub{A}^{\mathrm{degn}}_{n,n}$.\\
\\
2.  For each $n,m$, $\pi_{n,m}$ maps $\ub{A}(\un{n})$ to $\ub{A}_{n,m}^0$ and defines a splitting \[\ub{A}(\un{n})=\ub{A}_{n,m}^{\mathrm{degn}}\oplus\ub{A}_{n,m}^0. \]
3. $d_n(\ub{A}_{n+1,m}^{\mathrm{degn}})=0, d_n(\ub{A}_{n+1}^0)\subset \ub{A}^0_{n}$
\end{lem}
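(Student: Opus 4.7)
The plan is to realise $\ub{A}^0_{n,m}$ and $\ub{A}^{\mathrm{degn}}_{n,m}$ as the image and kernel, respectively, of a single idempotent endomorphism of $\ub{A}(\un{n})$, and then invoke the pseudo-abelianness of $\mc{A}$. Everything is ultimately bookkeeping around the cubical identities, and the cleanest route is to reduce to \cite[Proposition 5.7]{GeisserLevine} applied to the compatibly split $n$-cube $\{p_{n,I}\}$ already mentioned in the paragraph preceding the lemma.

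First I would verify that each $\pi^1_{n,i}$ is an idempotent endomorphism of $\ub{A}(\un{n})$ and identify its kernel and image. The cubical identity $p_{n,i}\circ\eta_{n-1,i,1}=\id$ dualises to $\eta^*_{n-1,i,1}\circ p^*_{n,i}=\id$, from which $(\pi^1_{n,i})^2=\pi^1_{n,i}$ is immediate. The same relation gives $\ker\pi^1_{n,i}=\ker\eta^*_{n-1,i,1}$ (if $p^*_{n,i}\eta^*_{n-1,i,1}(x)=0$, apply $\eta^*_{n-1,i,1}$) and $\mathrm{image}(\pi^1_{n,i})=p^*_{n,i}(\ub{A}(\un{n-1}))$. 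The cubical commutation relations between $p_{n,i}$ and $\eta_{n-1,j,1}$ for $i\ne j$ show that the $\pi^1_{n,i}$ pairwise commute, hence so do the idempotents $\id-\pi^1_{n,i}$, and therefore $\pi_{n,m}$ itself is an idempotent. Pseudo-abelianness then furnishes the decomposition $\ub{A}(\un{n})=\mathrm{image}(\pi_{n,m})\oplus\ker(\pi_{n,m})$. On the image side I compute $\mathrm{image}(\pi_{n,m})=\bigcap_{i=1}^m\mathrm{image}(\id-\pi^1_{n,i})=\bigcap_{i=1}^m\ker\eta^*_{n-1,i,1}=\ub{A}^0_{n,m}$, which settles existence of $\ub{A}^0_{n,m}$ and shows $\pi_{n,m}$ is the projector onto it. On the kernel side, expanding $\id-\pi_{n,m}=\id-\prod_{i=1}^m(\id-\pi^1_{n,i})$ by inclusion-exclusion exhibits it as a signed sum of nontrivial products of the $\pi^1_{n,i}$, each of which lies in some $p^*_{n,i}(\ub{A}(\un{n-1}))$; conversely any $p^*_{n,i}(y)$ is fixed by $\pi^1_{n,i}$ and so annihilated by $\pi_{n,m}$. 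This identifies $\ker(\pi_{n,m})=\sum_{i=1}^m p^*_{n,i}(\ub{A}(\un{n-1}))=\ub{A}^{\mathrm{degn}}_{n,m}$, proving parts 1 and 2 simultaneously.

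For part 3 I would argue directly from the cubical identities. To see $d_n(\ub{A}^{\mathrm{degn}}_{n+1,m})=0$, take $x=p^*_{n+1,i}(y)$ with $i\le m$: the $k=i$ summand in $d_n(x)$ contributes $(-1)^i(\eta^*_{n,i,1}-\eta^*_{n,i,0})p^*_{n+1,i}(y)=(-1)^i(y-y)=0$, while for $k\ne i$ the composite $p_{n+1,i}\circ\eta_{n,k,\epsilon}$ factors as $\eta_{n-1,k',\epsilon}\circ p_{n,i'}$ for appropriate indices $(k',i')$ independent of $\epsilon$, so the $\epsilon=0$ and $\epsilon=1$ contributions cancel pairwise. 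For $d_n(\ub{A}^0_{n+1})\subset\ub{A}^0_n$, given $x$ with $\eta^*_{n,i,1}(x)=0$ for all $i$, I would apply $\eta^*_{n-1,j,1}$ to $d_n(x)$ and use the identities $\eta_{n,k,\epsilon}\circ\eta_{n-1,j,1}=\eta_{n,j'(\epsilon),1}\circ\eta_{n-1,k'(\epsilon),\epsilon}$ (case by case in $k<j$, $k=j$, $k>j$) to rewrite every term so that $\eta^*_{n,\ast,1}$ is applied to $x$, making each one vanish. The only real obstacle is the sign- and index-bookkeeping in these cubical identities, which is exactly what the apparatus of compatibly split cubes in \cite[\S 5.6]{GeisserLevine} was designed to mechanise, so the proof can be closed by simply citing \cite[Proposition 5.7]{GeisserLevine}.
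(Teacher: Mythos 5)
Your overall strategy for parts 1 and 2 — realising $\ub{A}^0_{n,m}$ and $\ub{A}^{\mathrm{degn}}_{n,m}$ as the image and kernel of the idempotent $\pi_{n,m}$ and then invoking pseudo-abelianness — is the right one, and it is what the cited result of Geisser--Levine amounts to once unwound; the paper itself offers no more detail than the citation. Your verification that the $\pi^1_{n,i}$ are commuting idempotents, that $\ker\pi^1_{n,i}=\ker\eta^*_{n-1,i,1}$ splits off, and that the image of $\pi_{n,m}$ is the intersection of the kernels, is all correct, as is your argument for the containment $d_n(\ub{A}^0_{n+1})\subset\ub{A}^0_n$: applying $\eta^*_{n-1,j,1}$ to $d_n(x)$ and commuting it past $\eta^*_{n,k,\epsilon}$ always produces a composite of the form $\eta^*_{n-1,\bullet,\epsilon}\circ\eta^*_{n,\bullet,1}$ acting on $x$, which vanishes since $x\in\ub{A}^0_{n+1}$.

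However, the argument you give for $d_n(\ub{A}^{\mathrm{degn}}_{n+1,m})=0$ does not work. You correctly identify, for $k\ne i$, a factorisation $p_{n+1,i}\circ\eta_{n,k,\epsilon}=\eta_{n-1,k',\epsilon}\circ p_{n,i'}$ with $(k',i')$ independent of $\epsilon$, and then assert that ``the $\epsilon=0$ and $\epsilon=1$ contributions cancel pairwise.'' They do not: dualising, the $k$-th summand contributes $(-1)^k\,p^*_{n,i'}\bigl(\eta^*_{n-1,k',1}-\eta^*_{n-1,k',0}\bigr)(y)$, which is a genuine difference, not zero. Concretely, for $n=1$, $j=1$, one finds $d_1(p^*_{2,1}(y))=p^*_{1,1}\bigl(\eta^*_{0,1,1}-\eta^*_{0,1,0}\bigr)(y)$, which is nonzero unless $\ub{A}$ collapses the two vertices of $\ub{1}$. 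What your computation actually establishes is the weaker (and in fact the correct, and the one the rest of the paper uses) statement $d_n(\ub{A}^{\mathrm{degn}}_{n+1,m})\subset\ub{A}^{\mathrm{degn}}_{n,m}$, since every term $p^*_{n,i'}(\cdots)$ lands in the degenerate part; that containment is all that is needed for the quotient complex $A_*=\ub{A}_*/\ub{A}^{\mathrm{degn}}_*$ to be well-defined. So either the lemma's ``$=0$'' should be read as the containment, or you should simply defer to \cite[Proposition 5.7]{GeisserLevine} as the paper does; but the cancellation you describe is not how one proves it.
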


\begin{dfn}
For a cubical object $\ub{A}:\mathbf{Cube}^{op}\ra\mathcal{A}$ in a pseudo-abelian category $\mathcal{A}$, define the complex $(A_*,d)$ to be 
\[A_*:=\ub{A}_*/\ub{A}_*^{\mathrm{degn}}.\] \end{dfn}
Lemma \ref{cub} shows that $A_*$ is well-defined and is isomorphic to the subcomplex $(\ub{A}^0_*,d)$ of $(\ub{A}_*,d)$. Note that 
$d_n=\sum_{i=1}^n(-)^{i-1}\eta^*_{n,i,0}$ on $A_{n+1}$. 

We have the map of complexes
\[
\lambda:A_0\to A_*
\]
viewing $A_0$ as a complex concentrated in degree 0.

We now take $\mc{A}$ to be the category of complexes in an abelian category $\mc{A}_0$ (with some boundedness condition $b,+,-,\emptyset$. Applying the total complex functor to the map $\lambda$ gives us the map
\begin{equation}\label{eqn:Comp}
\lambda:A_0\to \Tot(A_*).
\end{equation}

\begin{lem} \label{lem:QIsoTot} Suppose that for each $n$, the map $i_n:\un{0}\to\un{n}$ with image $0^n$ induces a quasi-isomorphism
\[
A(i_n):A_n\to A_0
\]
Then the map \eqref{eqn:Comp} is a quasi-isomorphism.
\end{lem}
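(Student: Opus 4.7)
My plan is to compute $H^*(\Tot(A_*))$ via the spectral sequence of the cubical filtration and to identify $\lambda$ as inducing an isomorphism at the $E_2$-page.

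Regard $\Tot(A_*)$ as the total complex of the double complex whose $(p,q)$-entry is $A_{-p}^q$ for $p\le 0$, filtered by cubical degree $p$. The resulting spectral sequence has first page
\[
E_1^{-n,q} = H^q(A_n),
\]
and the hypothesis supplies isomorphisms $A(i_n)^*\colon H^q(A_n)\iso H^q(A_0)$ for every $n$. Under these identifications, every column of $E_1$ is canonically $H^q(A_0)$.

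To describe the $E_1$-differentials, I use the formula $d_n=\sum_{i=1}^n(-1)^{i-1}\eta^*_{n,i,0}$ on $A_{n+1}$ (displayed just after Lemma~\ref{cub}), together with the key identity $\eta_{n,i,0}\circ i_n=i_{n+1}$ (inserting $0$ at any position into $0^n$ yields $0^{n+1}$). This gives
\[
A(i_n)\circ d_n \;=\; \epsilon_n\,A(i_{n+1}), \qquad \epsilon_n:=\sum_{i=1}^n(-1)^{i-1},
\]
so $\epsilon_n=0$ for $n$ even and $\epsilon_n=1$ for $n$ odd. Consequently, under the chosen identifications, the $E_1$-differential $E_1^{-(n+1),q}\to E_1^{-n,q}$ is multiplication by $\epsilon_n$, and the $E_1$-complex at fixed internal degree $q$ takes the alternating identity-zero form
\[
\cdots\xrightarrow{1}H^q(A_0)\xrightarrow{0}H^q(A_0)\xrightarrow{1}H^q(A_0)\xrightarrow{0}H^q(A_0),
\]
the rightmost copy sitting in column $p=0$.

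A direct computation shows the cohomology of this complex is $H^q(A_0)$ in column $0$ and zero in every other column, so $E_2=E_\infty$ is concentrated in column $0$ with value $H^*(A_0)$. Under the boundedness hypothesis on $\mathcal{A}$, the spectral sequence converges strongly to $H^*(\Tot(A_*))$. Since $\lambda$ corresponds at the $E_1$-level to the identity inclusion $H^q(A_0)\to E_1^{0,q}=H^q(A_0)$, it induces an isomorphism on $E_2^{0,*}$ and hence on $H^*(\Tot(A_*))$, proving that $\lambda$ is a quasi-isomorphism. The principal delicate step is the sign computation that produces the alternating $\epsilon_n$-pattern; the remaining spectral-sequence bookkeeping and convergence are routine under the stated boundedness assumption.
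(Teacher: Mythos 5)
Your spectral-sequence setup is correct, but there is a genuine gap in the identification of the $E_1$-page, and it is not a small sign issue.

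The columns of $\Tot(A_*)$ are the \emph{reduced} complexes $A_n=\ub{A}(\un{n})/\ub{A}_n^{\mathrm{degn}}$, so
$E_1^{-n,q}=H^q(A_n)$ in that sense. However, the hypothesis of the lemma concerns the \emph{unreduced} cubical object: in the paper's proof it is used as the statement that $p_n^*\colon A_0\to \ub{A}(\un{n})$ (equivalently $\ub{A}(i_n)\colon\ub{A}(\un{n})\to A_0$) is a quasi-isomorphism. Note that $\ub{A}(i_n)$ does \emph{not} descend to the quotient $A_n$: indeed $\ub{A}(i_n)\circ p_{n,i}^*=\ub{A}(p_{n,i}\circ i_n)=\ub{A}(i_{n-1})\neq 0$ in general, so writing ``$A(i_n)\colon A_n\to A_0$'' can only mean the restriction of $\ub{A}(i_n)$ to the nondegenerate summand $\ub{A}_n^0\cong A_n$. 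But that restriction is generally \emph{not} a quasi-isomorphism under the hypothesis. In fact the opposite is true: the hypothesis $p_n^*$ q.iso.\ forces $A_n$ to be \emph{acyclic} for all $n\ge 1$ (e.g.\ for $n=1$, $\ub{A}(\un{1})=p_1^*A_0\oplus A_1^0$, so $A_1^0$ is acyclic), and this acyclicity is exactly the content one must establish. So the identification $E_1^{-n,q}\cong H^q(A_0)$ which drives your whole computation is false; the correct picture is that $E_1$ is already concentrated in the column $n=0$, and proving \emph{that} requires the induction on the filtration $A_{(m)}(\un{n})$ that the paper carries out. Your $\epsilon_n$-sign calculation, which you flag as the delicate step, is internally consistent but operates on groups that are not what the hypothesis provides; the actual work — showing $H^q(A_n)=0$ for $n\ge 1$ — is absent from your argument.

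To repair the approach while keeping the spectral sequence, you would replace the claim ``$E_1^{-n,q}\cong H^q(A_0)$'' with a proof that $H^q(A_n)=0$ for $n\ge 1$, which is essentially the inductive argument the paper gives (splitting off the constant summand and peeling off degeneracies via the maps $p_{n,I}^*$). At that point the spectral sequence degenerates at $E_1$ and the conclusion is immediate; the alternating-differential computation becomes unnecessary.
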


\begin{proof} For each $m$, let 
\[
A_{(m)}(\un{n}):=A(\un{n})/\sum_{I=(i_1,\ldots, i_m)}p_{n,I}^*(A(\un{m})).
\]
This defines 
\[
A_{(m)}:\Cube\to \mc{A}
\]
with $A_{(m)}(\un{n})=0$ for $n\le m$ and with 
\[
(A_{(m)})_n=A_n
\]
for $n>m$. 

For $m=0$, the unique projection $p_n:\un{n}\to\un{0}$ is split (by any inclusion $i_n:\un{0}\to \un{n}$), so we have the direct sum decomposition of cubical objects
\[
A=A_0\oplus A_{(0)}
\]
where we consider $A_0$ as a constant cubical object. Since $p_n^*:A_0\to A(\un{n})$ is a quasi-isomorphism, it thus follows that $A_{(0)}(\un{n})$ is acyclic for all $n$.

We now show by induction on $m$ that $A_{(m)}(\un{n})$ is acyclic for all $n$ and $m$. Indeed, by construction, the sum $\sum_{I=(i_1,\ldots, i_m)}p_{n,I}^*(A_{(\un{m-1})}(\un{m}))$ in $A_{(\un{m-1})}(\un{n})$ is a direct sum of copies of $A_{(\un{m-1})}(\un{m})$. As $A_{(\un{m-1})}(\un{m})$ and $A_{(\un{m-1})}(\un{n})$  are acyclic by the induction assumption, the induction goes through. 

In particular, the complex $A_n:=A_{(n-1)}(\un{n})$ is acyclic for $n>0$, and thus the total complex of the double complex $A_*$, $*\ge1$, is acyclic. As this latter complex is quasi-isomorphic to the cone of the map $\lambda:A_0\to \Tot(A_*)$, the map $\lambda$ is a quasi-isomorphism, as claimed.
\end{proof}

If we have two cubical objects $\ub{A},\ub{B}:\mathbf{Cube}^{op}\ra\mathcal{C}$ in a tensor category $\mathcal{C}$, we can define a diagonal object 
$\ub{A\otimes B}$ as \[ \ub{A\otimes B}(\ub{n}):=\ub{A}(\ub{n})\otimes\ub{B}(\ub{n}), \] and on morphisms by 
\[ \ub{A\otimes B}(f):=\ub{A}(f)\otimes\ub{B}(f). \]
Let $p^1_{n,m}:\ub{m+n}\ra\ub{n}$ and $p^2_{n,m}:\ub{m+n}\ra\ub{m}$ be the projections onto the first $n$ and last $m$ factors respectively. Let 
\[ \cup^{n,m}_{\ub{A},\ub{B}}:\ub{A}(\ub{n})\otimes\ub{B}(\ub{m})\ra\ub{A}(\ub{n+m})\otimes\ub{B}(\ub{n+m}) \]
be the map $\ub{A}(p^1_{n,m})\otimes\ub{B}(p^2_{n,m})$. Taking direct sum of $\cup^{n,m}_{\ub{A},\ub{B}}$ over $n,m$ yields a map of complexes
\begin{equation}\label{cup} \cup_{A,B}:\ub{A}_*\otimes \ub{B}_*\ra \ub{A\otimes B}^*\end{equation} with an associativity property
\[ \cup_{A\otimes B,C}\circ(\cup_{A,B}\otimes id_{\ub{C}_*})= \cup_{A,B\otimes C}\circ(id_{\ub{A}_*}\otimes\cup_{B,C}) \]

\phantomsection
\subsection{Multiplications and co-multiplications}
In this section, we fix a co-cubical object $n\mapsto\square^n$   (denoted $\square^*$) in a tensor category $(\mathcal{C},\otimes)$, such that
$\square^0$ is the unit object with respect to $\otimes$. 

\begin{dfn}\label{dfn:mult} A {\em multiplication} $\mu$ on  $\square^*$ is a collection of morphisms
\[
\mu_{n,m}:\square^n\otimes\square^m\to \square^{n+m}
\]
which are
\begin{enumerate}
\item bi-natural: Let $f:\un{n}\to \un{p}$ be a morphism in $\Cube$, giving the morphism $f\times\id:\un{n+m}\to \un{p+m}$. Then the diagram
\[
\xymatrix{
\square^n\otimes\square^m\ar[r]^{\mu_{n,m}}\ar[d]_{f\otimes\id}&\square^{n+m}\ar[d]^{f\times\id}\\
\square^p\otimes\square^m\ar[r]_{\mu_{p,m}}&\square^{p+m}
}
\]
commutes.
\item associative: The diagram
\[
\xymatrixcolsep{40pt}
\xymatrix{
\square^p\otimes \square^n\otimes\square^m\ar[r]^{\mu_{p,n}\otimes\id}\ar[d]_{\id\otimes\mu_{n,m}}&\square^{p+n}\otimes\square^m\ar[d]^{\mu_{p+n,m}}\\
\square^p\otimes\square^{n+m}\ar[r]_{\mu_{p,n+m}}&\square^{p+n+m}
}
\]
commutes.
\item  commutative: Let $\tau_{n,m}:\un{n+m}\to\un{n+m}$ be the morphism in $\Cube$ defined as the composition
\[
\un{n+m}=\un{n}\times\un{m}\xrightarrow{\sigma}\un{m}\times\un{n}=\un{n+m}
\]
where $\sigma$ is the symmetry isomorphism in $\Sets$.  Let $t_{n,m}:\square^n\otimes\square^m\to \square^m\otimes\square^n$ be the symmetry isomorphism in the tensor category $\mc{C}$. Then the diagram
\[
\xymatrix{
\square^n\otimes\square^m\ar[d]_{t_{n,m}}\ar[r]^{\mu_{n,m}}&\square^{n+m}\ar[d]^{\tau_{n,m}}\\
\square^m\otimes\square^m\ar[r]_{\mu_{m,n}}&\square^{n+m}
}
\]
commutes.
\item unital: Let $\mu_n:\square^0\otimes\square^n\to\square^n$ be the identity isomorphism in $\mc{C}$. Then the composition
\[
\square^n\xrightarrow{\mu_n^{-1}} \square^0\otimes\square^n\xrightarrow{\mu_{0,n}}\square^n
\]
is the identity.
\end{enumerate}
\end{dfn}

\begin{dfn}
Let  $\square^*\otimes\square^*$ be the diagonal co-cubical object $n\mapsto \square^n\otimes\square^n$. A {\em co-multiplication} $\delta^*$ on $\square^*$ is a morphism of
co-cubical objects \[ \delta^*:\square^*\ra\square^*\otimes\square^* \]
 such that $\delta^*$ is
\begin{enumerate}
\item {\em co-associative}: The diagram
\[
\xymatrix{
\square^*\ar[r]^{\delta^*}\ar[d]_{\delta^*}&\square^*\otimes\square^*\ar[d]^{\id\otimes\delta^*}\\
\square^*\otimes\square^*\ar[r]_{\delta^*\otimes\id}&\square^*\otimes\square^*\otimes\square^*
}
\]
\item {\em co-commutative}: Let  $t$ be the commutativity constraint in 
$(\mathcal{C},\otimes)$. Then the composition
\[
\square^*\xrightarrow{\delta^*}\square^*\otimes\square^*\xrightarrow{t_{\square^*,\square^*}}
\square^*\otimes\square^*
\]
is the identity.
\item {\em co-unital}: Let $p_n:\un{n}\to \un{0}:=\{0\}$ be the projection. The composition
\[
\square^n\xrightarrow{\delta^n}\square^n\otimes\square^n\xrightarrow{p_n\otimes\id}\square^0\otimes\square^n
\xrightarrow{\mu_n}\square^n
\]
is the identity.

\end{enumerate}
\end{dfn}

Let
\[
p^1_{n,m}:\un{n+m}\to \un{n},\ p^2_{n,m}:\un{n+m}\to \un{m}
\]
be the projections on the first $n$ (resp. the last $m$) factors. Given a co-multiplication $\delta^*$ on $\square^*$, we have the maps
\[
\delta_{n,m}:\square^{n+m}\to\square^n\otimes\square^m
\]
defined as the composition
\[
\square^{n+m}\xrightarrow{\delta^{n+m}}\square^{n+m}\otimes\square^{n+m}\xrightarrow{p^1_{n,m}\otimes p^2_{n,m}}\square^n\otimes\square^m.
\]

\begin{dfn} A {\em bi-multiplication} on  $\square^*$ is a multiplication $\mu_{**}$ and a co-multiplication $\delta^*$ on $\square^*$ such that $\mu_{n,m}$ and $\delta_{n,m}$ are inverse isomorphisms, for all $n,m\ge0$.
\end{dfn}

\begin{rem} Clearly a co-cubical object $\square^*$ with a bi-multiplication $(\mu_{**}, \delta^*)$ is canonically isomorphic, as a co-cubical object with  bi-multiplication,  to the co-cubical object
\[
n\mapsto (\square^1)^{\otimes n}
\]
with a bi-multiplication of the form $(\id, \tilde\delta^*)$.
\end{rem}

\phantomsection
\subsection{Extended cubes}

\begin{dfn}
Let $\mathbf{ECube}$ be the smallest symmetric monoidal subcategory of $\mathbf{Sets}$ having the same objects as $\mathbf{Cube}$, containing $\mathbf{Cube}$
and containing the morphism 
\[
m:\ub{2}\ra\ub{1}
\] 
defined by multiplication of integers:
\[m((1,1))=1;\quad m((a,b))=0 \text{ for }(a,b)\neq (1,1).\]
An {\em extended cubical object} in a category $\mathcal{C}$ is a functor $F:\mathbf{ECube}^{op}\ra\mathcal{C}$, and {\em an extended co-cubical object} in 
$\mathcal{C}$ is a functor $F:\mathbf{ECube}\ra\mathcal{C}$.
\end{dfn}

If $\square^*$ is an extended co-cubical object in a tensor category $\mc{C}$, a multiplication, resp. co-multiplication, resp. bi-multiplication  on $\square^*$ is defined as for a  co-cubical object in $\mc{C}$, with all functorialities and naturalities extending to $\mathbf{ECube}$. Concretely, a co-multiplication $\delta^*:\square^*\to \square^*\otimes\square^*$ is required to be a morphism of extended co-cubical objects and a multiplication is required to satisfy the bi-naturality condition of definition~\ref{dfn:mult}(1) with respect to all morphisms in $\mathbf{ECube}$.

\phantomsection
\subsection{DG categories associated with cubical categories}\label{subsec:DGCubeCat}
The category of cubical abelian groups $\mathbf{Ab}^{\mathbf{Cube}^{op}}$ carries the structure of a symmetric monoidal category in the following way:
If we have two cubical abelian groups $n\mapsto A(n), n\mapsto B(n)$, the tensor product $A\otimes B$ is the cubical abelian group 
$n\mapsto A(n)\otimes B(n)$, with morphisms acting by \[ g(a\otimes b)=g(a)\otimes g(b). \]
A {\em cubical category} is a category $\mathcal{C}$ enriched with cubical abelian groups. Explicitly, for objects $X,Y$ in $\mathcal{C}$, we have 
cubical abelian groups \begin{eqnarray*} \ub{Hom}(X,Y,\blank)&:& \mathbf{Cube}^{op}\ra\mathbf{Ab}\\ && \ub{n}\mapsto \ub{Hom}_\mathcal{C}(X,Y,n) 
\end{eqnarray*} with the following property:\\
For each object $X$ in $\mathcal{C}$, we have an element $id_X\in\ub{Hom}_\mathcal{C}(X,X,0)$ and we have an associative composition law, for objects 
$X,Y,Z$, 
\[\ub{\circ}_{X,Y,Z}:\ub{Hom}(Y,Z,\blank)\otimes\ub{Hom}(X,Y,\blank)\ra\ub{Hom}(X,Z,\blank) \]
with $f\ub{\circ}_{X,X,Z}id_X=f$ and $id_Z\ub{\circ}_{X,Z,Z}g=g$.

There is a functor $\mathcal{C}\mapsto\mathcal{C}_0$ from cubical categories to pre-additive categories, 
where $\mathcal{C}_0$ has the same objects as $\mathcal{C}$ and 
\[ Hom_{\mathcal{C}_0}(X,Y):=\ub{Hom}_{\mathcal{C}}(X,Y,0).\] 
A {\em cubical enrichment} of a pre-additive category $\mathcal{C}$ is a cubical category $\tilde{\mathcal{C}}$ with an isomorphism
$\mathcal{C}\simeq \tilde{\mathcal{C}}_0$.

We can associate a DG category to a cubical category $\mathcal{C}$. For objects $X,Y$ in $\mathcal{C}$, let 
$Hom_{dg\mathcal{C}}(X,Y)^*$ be the non-degenerate complex $\ub{Hom}_{\mathcal{C}}(X,Y)^*/\ub{Hom}_{\mathcal{C}}(X,Y)^*_{\mathrm{degn}}$ associated to the cubical abelian group $\ub{n}\mapsto \ub{Hom}_{\mathcal{C}}(X,Y,n)$. 
We have the composition law 
\[ \circ_{X,Y,Z}:Hom_{dg\mathcal{C}}(Y,Z)^*\otimes Hom_{dg\mathcal{C}}(X,Y)^*\ra Hom_{dg\mathcal{C}}(X,Z)^* \]
induced by $\ub{\circ}_{X,Y,Z}$ and the product \eqref{cup}.

It is easy to check that the complexes $Hom_{dg\mathcal{C}}(X,Y)^*$ together with the above composition law defines a DG
category $dg\mathcal{C}$ with the same objects as $\mathcal{C}$.

We now show how to construct a cubical category and hence a DG category from
a tensor category with a co-cubical object.

Let $\square^*$ be a co-cubical object with a co-multiplication $\delta$.
Defining
\[ \ub{Hom}(X,Y,n):= Hom_\mathcal{C}(X\otimes\square^*,Y) \]
gives a cubical abelian group $n\mapsto \ub{Hom}(X,Y,n)$. Let ${Hom}(X,Y)^*$ be the associated complex.
The co-multiplication gives a map 
\[\ub{\circ}_{X,Y,Z}:\ub{Hom}_\mathcal{C}(Y,Z,*)\otimes\ub{Hom}_\mathcal{C}(X,Y,*)\ra\ub{Hom}_\mathcal{C}(X,Z,*) \]
by sending $f\otimes g\in\ub{Hom}_\mathcal{C}(Y,Z,n)\otimes\ub{Hom}_\mathcal{C}(X,Y,n)$ to the morphism
\[ X\otimes\square^n\stackrel{id_X\otimes\delta^n_{red}}{\longrightarrow}X\otimes\square^n\otimes\square^n
\stackrel{g\otimes id_{\square^n}}{\longrightarrow}Y\otimes\square^n\stackrel{f}{\ra}Z \]
\begin{prop}
Let $(\mathcal{C},\otimes)$ be a tensor category with a co-cubical object $\square^*$ and a co-multiplication $\delta$ on 
$\square^*$. Then, the cubical abelian group $\ub{Hom}(X,Y,\blank)$ with the composition law $\circ_{X,Y,Z}$ defined above
describes a cubical enrichment of $\mathcal{C}$.
\end{prop}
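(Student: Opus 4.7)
The plan is to verify directly the axioms of a cubical category for the data $\ub{Hom}(X,Y,\blank)$ equipped with the composition $\ub{\circ}_{X,Y,Z}$, and then to check that evaluating at $n=0$ recovers the original pre-additive category $\mathcal{C}$. There are four separate verifications to carry out, all of which should reduce to the axioms on $(\square^*, \delta)$ stated in the previous subsections.

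First, $\ub{Hom}(X,Y,\blank)$ is a cubical abelian group because $\square^*$ is co-cubical: any morphism $f:\un{n}\to\un{m}$ in $\Cube$ induces $\id_X\otimes\square^f:X\otimes\square^n\to X\otimes\square^m$, and precomposition defines the required contravariant action. To show $\ub{\circ}_{X,Y,Z}$ is a morphism of cubical abelian groups, i.e., a natural transformation of functors $\Cube^{op}\to\Ab$, I would chase $f\otimes g\in \ub{Hom}(Y,Z,m)\otimes\ub{Hom}(X,Y,m)$ through the two ways of first composing and then applying $\id_X\otimes\square^\phi$ versus first applying $\square^\phi$ componentwise and then composing. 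The fact that $\delta^*$ is a \emph{morphism} of co-cubical objects $\square^*\to \square^*\otimes\square^*$ exactly makes the relevant square commute, so naturality reduces to functoriality of $\otimes$ and of $\Hom$.

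For associativity, given $f\in\ub{Hom}(Z,W,n)$, $g\in\ub{Hom}(Y,Z,n)$, $h\in\ub{Hom}(X,Y,n)$, I would expand $(f\ub{\circ}g)\ub{\circ}h$ and $f\ub{\circ}(g\ub{\circ}h)$ as morphisms $X\otimes\square^n\to W$ built from iterated applications of $\id\otimes\delta^n$ on the $\square^n$-factor; the two resulting expressions differ only by the two orderings of applying $\delta^n$, and co-associativity of $\delta^*$ identifies them. For the identity laws, the identity $\id_X\in \ub{Hom}(X,X,0)$ is the canonical isomorphism $X\otimes\square^0\iso X$ furnished by the assumption that $\square^0$ is the unit of $\otimes$. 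The equality $f\ub{\circ}\id_X=f$ then follows from co-unitality of $\delta^*$ applied via the projection $p_n:\un{n}\to\un{0}$: the composite $\square^n\by{\delta^n}\square^n\otimes\square^n\by{p_n\otimes\id}\square^0\otimes\square^n\by{\mu_n}\square^n$ is the identity, which is exactly what is needed after tensoring with $X$. The dual identity $\id_Y\ub{\circ}f=f$ is analogous.

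Finally, to identify $\mathcal{C}\simeq \widetilde{\mathcal{C}}_0$, note that $\ub{Hom}(X,Y,0)=\Hom_\mathcal{C}(X\otimes\square^0,Y)\cong \Hom_\mathcal{C}(X,Y)$ using the unit isomorphism, and at $n=0$ the composition $\ub{\circ}_{X,Y,Z}$ uses $\delta^0:\square^0\to\square^0\otimes\square^0$, which is forced to be the canonical isomorphism of the unit by co-unitality; under the identification with $\Hom_\mathcal{C}$ this reproduces ordinary composition. The main bookkeeping obstacle is keeping the factor orderings straight when unwinding $\ub{\circ}$ of $\ub{\circ}$, since one must match the two $\delta^n$'s applied to the same $\square^n$-factor to the two parenthesizations; once this is drawn as a pentagon involving the co-associativity diagram from the definition of co-multiplication, everything collapses.
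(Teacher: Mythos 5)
The paper states this proposition without proof, leaving the verification implicit, and your proposal carries out exactly the direct axiom-checking that the author intends; so the overall approach is the same. Your three main arguments are correctly located: that $\ub{\circ}_{X,Y,Z}$ is a morphism of cubical abelian groups comes from $\delta^*$ being a morphism of co-cubical objects (i.e., $(\square^\phi\otimes\square^\phi)\circ\delta^m=\delta^n\circ\square^\phi$ for $\phi:\un{m}\to\un{n}$, plus the interchange law in $\mathcal{C}$); associativity collapses to $(\delta^n\otimes\id)\circ\delta^n=(\id\otimes\delta^n)\circ\delta^n$; and the right-identity law $f\ub{\circ}\id_X=f$ reduces, after tensoring with $X$ and using coherence of the unit constraints, to the stated co-unitality $\mu_n\circ(p_n\otimes\id)\circ\delta^n=\id_{\square^n}$. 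Note incidentally that where you use $\delta^n$ the body of the paper writes $\delta^n_{\mathrm{red}}$, which is never actually defined in the visible text; for the cubical enrichment both give a well-defined composition, and the distinction only matters later for descent to the non-degenerate quotient complexes, so your reading is the sensible one for this proposition.

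The one place you should tighten is the left identity $\id_Z\ub{\circ}g=g$, which you dismiss as ``analogous.'' Unwinding the composition with $p_n^*(\id_Z)$ in the outer position, what you actually need is that $\square^n\xrightarrow{\delta^n}\square^n\otimes\square^n\xrightarrow{\id\otimes p_n}\square^n\otimes\square^0\to\square^n$ is the identity, i.e., co-unitality projecting away the \emph{second} factor. The definition of co-multiplication only asserts the version that kills the \emph{first} factor. The second version follows, but not formally by ``the same computation'': you must invoke co-commutativity of $\delta^*$ together with naturality of the symmetry and coherence of the two unit constraints to convert one into the other. It is worth spelling that dependence out, both because co-commutativity is otherwise never used in this verification and because this is precisely the sort of left/right asymmetry that silently produces sign or constraint errors once one passes to the DG composition law.
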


Thus, following the method described above, we get a DG category $dg\mathcal{C}=(\mathcal{C},\otimes,\square^*,\delta^*)$.

\phantomsection
\section{More on DG categories and cubical categories} In this section, we give some variations on the theme of cubical categories and DG categories discussed in \S\ref{subsec:DGCubeCat}. Throughout this section, we fix an tensor category $\mc{C}$ with a co-cubical objects $\square^*$ having a co-multiplication $\delta^*$

\phantomsection
\subsection{Homotopy invariance} 
Let $p_1:\square^1\to \square^0$ be the map induced by $\un{1}\to \un{0}$ in $\Cube$. For $X\in\mc{M}$, let $p_X:X\otimes\square^1\to X$ be the composition
\[
X\otimes\square^1\xrightarrow{\id_X\otimes p_1}X\otimes\square^0\xrightarrow{\mu_X}X
\]
where $\mu_X:X\otimes\square^0\to X$ is the unit isomorphism in $\mc{C}$.

\begin{prop}\label{prop:HomInv}  Suppose that $\square^*$ is an extended co-cubical object of $\mc{C}$ and that $\delta^*$ extends to a bi-multiplication $(\mu_{**},\delta^*)$ on the extended co-cube $\square^*$. Then the map
\[
p_X^*:Hom_{dg\mc{C}}(X,Y)^*\to Hom_{dg\mc{C}}(X\otimes\square^1,Y)^*
\]
is a homotopy equivalence.
\end{prop}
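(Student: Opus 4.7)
The plan is to produce a strict one-sided inverse $i_{X,0}^*$ for $p_X^*$ and then a chain homotopy showing that the other composition $p_X^*\circ i_{X,0}^*$ is homotopic to the identity. For the first part, the cubical inclusion $\eta_{0,1,0}:\un{0}\to\un{1}$ supplies $i_{X,0}:X\cong X\otimes\square^0\to X\otimes\square^1$, hence a chain map
\[ i_{X,0}^*:Hom_{dg\mc{C}}(X\otimes\square^1,Y)^*\to Hom_{dg\mc{C}}(X,Y)^*. \]
Since $p_1\circ\eta_{0,1,0}=\id_{\un{0}}$, one has $p_X\circ i_{X,0}=\id_X$ and hence $i_{X,0}^*\circ p_X^*=\id$.

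The central ingredient for the homotopy will be an ``interval contraction'' $\tilde m:=\square(m)\circ\mu_{1,1}:\square^1\otimes\square^1\to\square^1$, obtained by combining the extended-cube morphism $m:\un{2}\to\un{1}$ (multiplication of integers) with the bi-multiplication isomorphism $\mu_{1,1}:\square^1\otimes\square^1\iso\square^2$. Writing $i_\epsilon:=\square(\eta_{0,1,\epsilon}):\square^0\to\square^1$, the identity $m(a,b)=ab$ together with bi-naturality of $\mu$ yields the boundary relations $\tilde m\circ(i_1\otimes\id)=\id_{\square^1}$ and $\tilde m\circ(i_0\otimes\id)=i_0\circ p_1$ (the ``zero'' map $\square^1\to\square^0\to\square^1$), and similarly in the second factor. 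Using the further bi-multiplication isomorphism $\mu_{1,n}:\square^1\otimes\square^n\iso\square^{n+1}$ to identify $Hom_{\mc{C}}(X\otimes\square^1\otimes\square^{n+1},Y)$ with $Hom_{\mc{C}}(X\otimes\square^1\otimes\square^1\otimes\square^n,Y)$, I would define the degree $-1$ operator
\[ H(g):=g\circ(\id_X\otimes\tilde m\otimes\id_{\square^n}) \]
for $g:X\otimes\square^1\otimes\square^n\to Y$. Because $\tilde m$ commutes with any projection acting on the $\square^n$-factor, $H$ preserves degenerate cubes and therefore descends to the non-degenerate quotient.

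Finally, I would verify the chain homotopy identity $dH+Hd=p_X^*i_{X,0}^*-\id$ by splitting the cubical differential $d=\sum_{i=1}^{n+1}(-1)^i(\eta^*_{n,i,1}-\eta^*_{n,i,0})$ according to the decomposition $\square^{n+1}\cong\square^1\otimes\square^n$. The position-$1$ term on $\square^{n+1}$ corresponds to the ``new'' $\square^1$-factor fed into $\tilde m$, and by the boundary relations above contributes exactly $p_X^*i_{X,0}^*(g)-g$; the positions $i\geq 2$ commute with $\tilde m$ and produce terms $H(\eta^*_{n-1,i-1,\epsilon}(g))$ that, after reindexing $j=i-1$, cancel $Hd(g)$ up to the expected sign. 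Combined with $i_{X,0}^*\circ p_X^*=\id$, this will show that $p_X^*$ is a homotopy equivalence with inverse $i_{X,0}^*$. The main obstacle is the compatibility check in this last step: one must verify that $\mu_{1,n}$ intertwines the position-$1$ cubical face on $\square^{n+1}$ with $i_\epsilon\otimes\id$ on the first factor of $\square^1\otimes\square^n$---this uses bi-naturality of $\mu_{**}$ and is precisely where the bi-multiplication hypothesis is essential beyond merely having the extended-cube morphism $m$---while keeping track of the alternating signs in $d$ consistently.
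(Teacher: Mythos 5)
Your proof takes essentially the same route as the paper: the paper also sets $i_X:=\id_X\otimes i_0$, checks $p_X\circ i_X=\id$, and then defines the homotopy by precomposition with $q_n:\square^1\otimes\square^{n+1}\to\square^1\otimes\square^n$, which is exactly the map you build from $\tilde m=\square(m)\circ\mu_{1,1}$ and $\delta_{1,n}$; the three face identities you list (functoriality with respect to $\Cube$ morphisms in the last $n$ slots, and the two boundary relations for $\tilde m$) are the same lemma the paper records before the sign computation. The argument is correct, and you have correctly flagged both the descent to the non-degenerate quotient (via naturality in the $\square^n$-factor) and the point where the bi-multiplication, as opposed to a bare multiplication, is genuinely needed (to transport faces across the isomorphism $\mu_{1,n}$).
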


\begin{proof} We have the map $i_0:=\eta_{0,1,0}:\un{0}\to \un{1}$, sending $\un{0}=0$ to $0\in\un{1}$.  We have the map $i_X:=\id_X\otimes i_0:X\to X\otimes\square^1$. Clearly $p_X\circ i_X=\id_X$, hence $i_X^*\circ p_X^*=\id$. To complete the proof, it suffices to show that $p_X^*\circ i_X^*$ is homotopic to the identity.

For this, recall the multiplication map
\[
m:\un{2}\to \un{1}
\]
$m(1,1)=1$, $m(a,b)=0$ if $(a,b)\neq(1,1)$.

Consider the map
\[
q_n:\square^1\otimes\square^{n+1}\to \square^1\otimes\square^n
\]
defined as the composition
\[
\square^1\otimes\square^{n+1}\xrightarrow{\id\otimes\delta_{1,n}}
\square^1\otimes\square^1\otimes\square^n\xrightarrow{\mu_{1,1}\otimes\id}
\square^2\otimes\square^n\xrightarrow{m\otimes\id}\square^1\otimes\square^n.
\]
We thus have the map
\[
h_n:=q_n^*: Hom_{dg\mc{C}}(X\otimes\square^1,Y)^n\to  Hom_{dg\mc{C}}(X\otimes\square^1,Y)^{n+1},
\]
which we claim gives a homotopy between the identity and $p_X^*\circ i_X^*$. 

To prove this, we note the following identities (we identify $\square^0\otimes\square^a$ and 
$\square^a\otimes\square^0$ with $\square^a$ via the unit isomorphism)
\begin{enumerate}
\item Let $f:\un{n}\to\un{m}$ be a morphism in $\Cube$ and let $f_1:=\id_{\un{1}}\times f$. Then
\[
q_m\circ (\id\otimes f_1)=(\id\otimes f)\circ q_n.
\]
In particular, for $i\ge 2$, $n\ge1$ and $\epsilon\in\{0,1\}$, we have
\[
q_n\circ(\id\otimes\eta_{n, i,\epsilon})=(\id\otimes \eta_{n-1,i-1,\epsilon})\circ q_{n-1}.
\]
\item $q_n\circ (\id\otimes \eta_{n,1,1})=\id$
\item $q_n\circ (\id\otimes \eta_{n,1,0})=(i_0\circ p_1)\otimes\id$
\end{enumerate}
In the additive  category generated by $\Cube$ (which is a tensor category with product $\times$), this gives the identity
\begin{align*}
q_{n}\circ&(\id\otimes \sum_{i=1}^{n}(-1)^i(\eta_{n,i,1}-\eta_{n,i,1})+
(\id\times \sum_{i=1}^{n-1}(-1)^i(\eta_{n-1,i,1}-\eta_{n-1,i,1})\circ q_{n-1}\\
&=
q_n\circ (\id\times\eta_{n,1,0}-\id\otimes\eta_{n,1,1})\\
&\hskip 10pt+q_{n}\circ(\id\times \sum_{i=2}^{n}(-1)^i(\eta_{n,i,1}-\eta_{n,i,1}))+
(\id\times \sum_{i=1}^{n-1}(-1)^i(\eta_{n-1,i,1}-\eta_{n-1,i,1})\circ q_{n-1}\\
&=q_n\circ (\id\times\eta_{n,1,0}-\id\times\eta_{n,1,1})\\
&\hskip 10pt+(\id\times \sum_{i=1}^{n-1}(-1)^{i+1}(\eta_{n-1,i,1}-\eta_{n-1,i,1}))\circ q_{n-1}+
(\id\times \sum_{i=1}^{n-1}(-1)^i(\eta_{n-1,i,1}-\eta_{n-1,i,1}))\circ q_{n-1}\\
&=(i_0\circ p_1-\id_{\un{1}})\times\id_{\un{n}}
\end{align*}
Therefore, the maps $h_n$ gives the desired homotopy.
\end{proof}

\begin{rem}\label{rem:Homotopy0Section} Let $i_{X0}:X\to X\otimes\square^1$ be the map induced by $i_0:=\eta_{0,1,0}:\square^0\to\square^1$ with image $0$. As $p_X\circ i_{X0}=\id_X$, it follows from proposition~\ref{prop:HomInv} that
\[
i_{X0}^*:Hom_{dg\mc{C}}(X\otimes\square^1,Y)^*\to Hom_{dg\mc{C}}(X,Y)^*
\]
is a homotopy equivalence, assuming that we have a bi-multiplication on the extended cube $\square^*$.
\end{rem}

\phantomsection
\subsection{Multi-cubes}
Next, we see what happens when we replace a cube with a multi-cube. Let $\mc{C}$ be a tensor category with a co-cubical object $\square^*$. Form the $k$-cubical object
\[
(a_1,\ldots, a_k)\mapsto Hom_{\mc{C}}(X\otimes\square^{a_1}\otimes\ldots\otimes\square^{a_k},Y)
\]
Taking the quotient by the degenerate elements with respect to each variable gives us the $k$-dimensional complex
\[
Hom_{\mc{C}}(X,Y)^{*_1,\ldots, *_k}
\]
and then the total complex
\[
Hom_{\mc{C}}(X,Y)_k^*:=\Tot Hom_{\mc{C}}(X,Y)^{*,\ldots, *}
\]

Take an integer $k'$ with $1\le k'<k$. Identifying $Hom_{\mc{C}}(X,Y)^{*_1,\ldots, *_{k'}}$ with the $k'$-dimension subcomplex  $Hom_{\mc{C}}(X,Y)^{*_1,\ldots, *_{k'},0,\ldots, 0}$ of $Hom_{\mc{C}}(X,Y)^{*_1,\ldots, *_k}$ induces the inclusion of total complexes
\begin{equation}\label{eqn:MultiCubeQIso}
\lambda_{k',k}:Hom_{\mc{C}}(X,Y)_{k'}^*\to Hom_{\mc{C}}(X,Y)_k^*.
\end{equation}

\begin{lem} \label{lem:quasi1} Suppose that $\square^*$ is an extended co-cubical object with a bi-multiplication.  Then for $1\le k'<k$, the map \eqref{eqn:MultiCubeQIso} is a quasi-isomorphism for all $X$ and $Y$ in $\mc{C}$.
\end{lem}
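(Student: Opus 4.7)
The plan is to argue by induction on $k - k'$. The case $k - k' = 0$ is trivial, and $\lambda_{k',k}$ factors as $\lambda_{k-1,k}\circ\cdots\circ\lambda_{k',k'+1}$, so it suffices to treat the case $k = k'+1$.

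For this step, I would reorganize the $k$-cubical object by separating off the last variable. Setting
\[
\mc{D}_n := Hom_{\mc{C}}(X,Y)^{*_1,\ldots,*_{k-1},n},
\]
where the indicated normalization and total complex are taken only in the first $k-1$ variables, one obtains a cubical object $n \mapsto \mc{D}_n$ in $C(\Ab)$. By construction $\mc{D}_0 = Hom_{\mc{C}}(X,Y)^*_{k'}$, the normalized total complex of $n \mapsto \mc{D}_n$ recovers $Hom_{\mc{C}}(X,Y)^*_{k}$, and $\lambda_{k',k}$ is exactly the canonical inclusion $\mc{D}_0 \to \Tot(\mc{D}_*)$ appearing in Lemma \ref{lem:QIsoTot}. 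That lemma reduces the proof to showing that, for every $n \ge 0$, the map $\mc{D}_n \to \mc{D}_0$ induced by $i_n\colon \un{0}\to\un{n}$ is a quasi-isomorphism.

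Using the commutativity of $\otimes$, I would identify $\mc{D}_n$ with $Hom_{\mc{C}}(X\otimes\square^n,Y)^*_{k-1}$, whereupon the relevant map becomes the one induced by $\id_X\otimes i_n\colon X\to X\otimes\square^n$. The projection $\id_X\otimes p_n$ provides a one-sided inverse. Invoking the bi-multiplication isomorphism $\square^n\cong(\square^1)^{\otimes n}$ and iterating, the question reduces to the case $n=1$, with $X$ allowed to be an arbitrary object of $\mc{C}$: it suffices to show that
\[
i_{X0}^*\colon Hom_{\mc{C}}(X\otimes\square^1,Y)^*_{k-1}\to Hom_{\mc{C}}(X,Y)^*_{k-1}
\]
is a chain homotopy equivalence for every $X$.

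The final step is to produce such a chain homotopy by adapting the construction in Proposition \ref{prop:HomInv}. The map $h_n = q_n^*$ defined there acts only on the distinguished $\square^1$-factor and is bi-natural in morphisms of $\Cube$ acting on the remaining cube coordinates, so the same formulas ought to yield a chain homotopy on the $(k-1)$-fold normalized total complex. The main technical obstacle will be sign bookkeeping: one must verify that the signs in the differentials, the normalization, and the Alexander--Whitney style product in the $k-1$ auxiliary cube directions are compatible with the signs appearing in the combinatorial identity at the end of the proof of Proposition \ref{prop:HomInv}, so that the relation $\id - p_X^*\circ i_{X0}^* = dh + hd$ continues to hold on the multi-cube total complex. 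Once this compatibility is established, Lemma \ref{lem:QIsoTot} finishes the argument.
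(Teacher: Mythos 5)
Your argument tracks the paper's up to the reduction (via Lemma \ref{lem:QIsoTot}) to showing that the pullback $p_n^*\colon Hom_{\mc{C}}(X,Y)^*_{k-1}\to Hom_{\mc{C}}(X\otimes\square^n,Y)^*_{k-1}$ is a quasi-isomorphism for all $X$, $Y$, $n$, and the further reduction to $n=1$ via the bi-multiplication $\square^n\cong(\square^1)^{\otimes n}$. It is at the final step that you depart from the paper, and where a gap appears. The paper runs a genuine induction on $k$: it invokes the inductive hypothesis that $\lambda_{1,k-1}$ is already a quasi-isomorphism for every pair of objects of $\mc{C}$ (in particular for $X$ and for $X\otimes\square^n$), so that the commutative square
\[
\xymatrix{
Hom_{\mc{C}}(X,Y)_1^*\ar[r]^{p_n^*}\ar[d]_{\lambda_{1,k-1}}&Hom_{\mc{C}}(X\otimes\square^n,Y)_1^*\ar[d]^{\lambda_{1,k-1}}\\
Hom_{\mc{C}}(X,Y)_{k-1}^*\ar[r]_{p_n^*}&Hom_{\mc{C}}(X\otimes\square^n,Y)_{k-1}^*
}
\]
together with Proposition \ref{prop:HomInv} applied to the top row (using $Hom_{\mc{C}}(\cdot,\cdot)_1^*=Hom_{dg\mc{C}}(\cdot,\cdot)^*$) transfers the quasi-isomorphism from the $k=1$ case to the $(k-1)$-fold case. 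Proposition \ref{prop:HomInv} is thereby used purely as a black box.

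Your induction on $k-k'$ is trivial (it only composes the consecutive inclusions $\lambda_{k'+j,k'+j+1}$) and carries no inductive hypothesis about $\lambda_{1,k-1}$, so you cannot perform this transfer. To compensate, you propose to rebuild the chain homotopy of Proposition \ref{prop:HomInv} directly on the $(k-1)$-fold normalized total complex $Hom_{\mc{C}}(X\otimes\square^1,Y)^*_{k-1}$. You do not carry that construction out, and you explicitly flag the required sign compatibility as unverified. That is a genuine gap: one would need to check that $q_n^*$, acting on one distinguished cube direction, anticommutes with the Koszul-signed differentials coming from the other $k-2$ cube directions in $\Tot$, and that the combinatorial identity closing the proof of Proposition \ref{prop:HomInv} survives those signs; none of this is in the proposal. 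This is precisely the verification the paper's inductive structure is designed to avoid. If you instead set up the induction on $k$ itself, as the paper does, the homotopy never needs to be rebuilt and Proposition \ref{prop:HomInv} can be cited as stated.
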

\begin{proof} We proceed by induction on $k$. For $k=1$ there is nothing to prove and it suffices to prove the case $k'=k-1$.

 Note that $Hom_{\mc{C}}(X,Y)_k^*$ is isomorphic to the non-degenerate complex total complex of the complex associated to the cubical object
\[
n\mapsto Hom_{\mc{C}}(X\otimes\square^n,Y)_{k-1}^*:
\]
\[
Hom_{\mc{C}}(X,Y)_k^*\cong \Tot[Hom_{\mc{C}}(X\otimes\square^*,Y)_{k-1}^*].
\]
In addition, via this identity, the map $\lambda_{k-1,k}$ is transformed to the map \eqref{eqn:Comp}
\[
\lambda:Hom_{\mc{C}}(X,Y)_{k-1}^*\to \Tot[Hom_{\mc{C}}(X\otimes\square^*,Y)_{k-1}^*].
\]

By our induction hypothesis the map
\[
\lambda_{1,k-1}:Hom_{\mc{C}}(X\otimes\square^n ,Y)_{1}^*\to Hom_{\mc{C}}(X\otimes\square^n,Y)_{k-1}^*
\]
is a quasi-isomorphism for all $X, Y$ and $n$. Also, 
\[
Hom_{\mc{C}}(X\otimes\square^n ,Y)_{1}^*=Hom_{dg\mc{C}}(X\otimes\square^n ,Y)^*,
\]
so by proposition~\ref{prop:HomInv}, the map
\[
p_n^*:Hom_{\mc{C}}(X,Y)_{1}^*\to Hom_{\mc{C}}(X\otimes\square^n ,Y)_{1}^*
\]
is a quasi-isomorphism for all $n$. Our induction hypothesis thus implies that
\[
p_n^*:Hom_{\mc{C}}(X,Y)_{k-1}^*\to Hom_{\mc{C}}(X\otimes\square^n,Y)_{k-1}^*
\]
is a quasi-isomorphism for all $n$.  By lemma~\ref{lem:QIsoTot} the map $\lambda$ is a quasi-isomorphism, hence 
$\lambda_{k-1,k}$ is a quasi-isomorphism. 
\end{proof}

\phantomsection
\subsection{The extended DG category} \label{subsec:ExtDGCat}
Next, we look at what happens when we add cubes to the target. Let $\mc{C}$ be an additive category with a co-cubical object $n\mapsto \square^n$. We define a new DG category $\dg\mathcal{C}$, with the same objects as $\mc{C}$. The Hom complexes are defined as follows: For each $m$, we have the non-degenerate complex $Hom_{dg\mathcal{C}}(X,Y\otimes\square^m)^*$; let 
$Hom_{\mc{C}}(X,Y\otimes\square^m)^*_0$ be the subcomplex consisting of $f$   such that
\[
p_{m,i}\circ f=0\in Hom_{\mc{C}}(X,Y\otimes\square^{m-1})^*;\quad i=1,\ldots, m.
\]
Let
\[
Hom_{\dg\mathcal{C}}(X,Y)^p:=\prod_{m-n=p}Hom_{dg\mc{C}}(X,Y\otimes\square^m)^{-n}_0;
\]
for $f:=(f_n\in Hom_{\mc{C}}(X,Y\otimes\square^{p+n})^{-n}_0)$, define $df=((df)_n\in Hom_{\mc{C}}(X,Y\otimes\square^{n+p+1})^{-n}_0)$ with
\[
(df)_n:=\sum_{i=1}^n(-1)^if_{n+1}\circ(\eta_{n,i,1}-\eta_{n,i,0})-(-1)^{p}\sum_{i=1}^{n+p}(-1)^i(\eta_{n+p,i,1}-\eta_{n+p,i,0})\circ f_n,
\]
which we write as
\[
(df)_n=f_{n+1}\circ d_X-(-1)^pd_Y\circ f_n;\ df:= f\circ d_X-(-1)^pd_Y\circ f.
\]

The composition $(g_m)\circ(f_n)$ with 
\begin{align*}
&f_n\in Hom_{dg\mc{C}}(X,Y\otimes\square^{n+p})^{-n}_0\subset Hom_{\mc{C}}(X\otimes\square^n,Y\otimes\square^{n+p})\\
&g_m\in Hom_{dg\mc{C}}(Y,Z\otimes\square^{m+q})^{-m}_0\subset
Hom_{\mc{C}}(Y\otimes\square^m, ,Z\otimes\square^{m+q})
\end{align*} 
is the sequence $(g_{n+p}\circ f_n)$, where we use the composition in $\mc{C}$. One checks that this does indeed define a DG category, which we denote by $\dg\mathcal{C}$.

\begin{rem} Note that contrary to the DG category $dg\mc{C}$, we did not require a co-multiplication to define the composition law.
\end{rem}

Now assume that $\square^*$ has a co-multiplication $\delta^*$. We  define a DG functor
\[
F:dg\mathcal{C}\to \dg\mathcal{C}
\]
as follows:

Suppose we are given $f:X\otimes \square^n\to Y$. Define $F(f):=(F(f)_m)$, where 
\[
F(f)_m:X\otimes \square^{n+m}\to Y\otimes\square^m
\]
is the map defined by the composition 
\[
X\otimes \square^{n+m}\xrightarrow{\delta^{n+m}}X\otimes \square^{n+m}\otimes\square^{n+m}\xrightarrow{p^1_{n,m}\otimes p^2_{n,m}}
X\otimes \square^{n}\otimes\square^{m}\xrightarrow{f\otimes\id_{\square^m}} Y\otimes\square^m.
\]
One checks that sending $f$ to $F(f)$ defines a map of complexes
\[
F_{X,Y}:Hom_{dg\mathcal{C}}(X,Y)^*\to Hom_{\dg\mathcal{C}}(X, Y)^*
\]
and is compatible with composition, giving us the DG functor
\begin{equation}\label{eqn:DGCompFun}
F:dg\mathcal{C}\to \dg\mathcal{C}.
\end{equation}

In many situations, the functor $F$ is a quasi-equivalence of DG categories, that is, the induced map $F_{X,Y}$ on the Hom-complexes is a quasi-isomorphism (in general, one also supposes that $F$ is a surjection on isomorphism classes, but as $F$ is a bijection on objects, this is immediate).

We first require a definition

\begin{dfn} Let $\square^*$ be a co-cubical object in an additive category $\mc{C}$, with a co-multiplication $\delta$, giving us the DG category $dg\mathcal{C}$. Call $(\square^*,\delta)$ {\em homotopy invariant} if for all $n$ and all $X, Y$ in $\mc{C}$, the morphism $i_n:\underline{0}\to \underline{n}$ with image $0^n$ induces a quasi-isomorphism
\[
i_{n*}:Hom_{dg\mathcal{C}}(X,Y)^*\cong Hom_{dg\mathcal{C}}(X,Y\otimes\square^0)^*\to 
Hom_{dg\mathcal{C}}(X,Y\otimes\square^n)^*
\]
\end{dfn}

\begin{prop} \label{prop:ExtCubeQIso} Suppose that $\square^*$ is homotopy invariant. Then the functor \eqref{eqn:DGCompFun} $F:dg\mathcal{C}\to \dg\mathcal{C}$ is a quasi-equivalence.
\end{prop}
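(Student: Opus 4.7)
The plan is to realize $\Hom_{\dg\mathcal{C}}(X,Y)^*$ as a product total complex of a double complex and argue that $F$ is a quasi-isomorphism by combining an explicit chain retraction with a spectral sequence argument using target-cube homotopy invariance. Concretely, set $C^{n,m}:=\Hom_{dg\mathcal{C}}(X,Y\otimes\square^m)^{-n}_0$. From the definition of $\dg\mathcal{C}$ one reads off
\[
\Hom_{\dg\mathcal{C}}(X,Y)^p=\prod_{m-n=p}C^{n,m},
\]
with differential $d=d_X+(-1)^{p+1}d_Y$, where $d_X$ lowers $n$ by one (source cube) and $d_Y$ raises $m$ by one (target cube); the ``row'' $C^{*,0}$ recovers $\Hom_{dg\mathcal{C}}(X,Y)^*$. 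Counitality of $\delta^*$ ensures that for $f\in C^{N,0}$ the component of $F(f)$ at source degree $N$ (equivalently target degree $0$) equals $f$ itself.

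Since $C^{n,m}$ vanishes whenever $m<0$, an element $(f_n)$ of cohomological degree $-N$ necessarily has $f_n=0$ for $n<N$. Thus $\pi((f_n)):=f_N$ defines a natural retraction $\pi:\Hom_{\dg\mathcal{C}}(X,Y)^*\to\Hom_{dg\mathcal{C}}(X,Y)^*$; a direct check shows $\pi$ is a chain map (the $d_Y$-contribution to $(df)_{N-1}$ vanishes because $f_{N-1}=0$) and satisfies $\pi\circ F=\id$, exhibiting $F$ as a split monomorphism of complexes. It therefore suffices to show that $\pi$ is a quasi-isomorphism.

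For this I would use the decreasing target-degree filtration $F^k=\{(f_n):f_n=0\text{ whenever }n+p<k\}$, which is stable under $d$ since $d_X$ preserves $m$ and $d_Y$ increases it. The associated graded piece $(F^k/F^{k+1},d_X)$ is the complex $(C^{*,k},d_X)$, which sits inside $\Hom_{dg\mathcal{C}}(X,Y\otimes\square^k)^*$ as the target-normalized subcomplex cut out by $p_{k,i}\circ f=0$. A cocubical Dold--Kan splitting gives $\Hom_{dg\mathcal{C}}(X,Y\otimes\square^k)^*=C^{*,k}\oplus D^{*,k}$, where $D^{*,k}$ is the sum of images of the cocubical cofaces $(\eta_{k-1,i,0})_*$; since the iterated $0$-section $i_{k*}$, a quasi-isomorphism by hypothesis, factors through $D^{*,k}$, the complementary summand $C^{*,k}$ is acyclic for $k\ge 1$. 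The associated spectral sequence then collapses onto its $k=0$ column $(C^{*,0},d_X)=\Hom_{dg\mathcal{C}}(X,Y)^*$, and $\pi$ (which preserves the filtration and induces the edge map) is a quasi-isomorphism. The hard part is justifying the acyclicity of $C^{*,k}$ for $k\ge 1$ using only the $0$-section form of homotopy invariance: this requires iterating through the $k$ cocubical projections and combining the Dold--Kan decomposition with homotopy invariance at each stage, a bookkeeping that is the genuine technical core of the argument.
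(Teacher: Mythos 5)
Your argument is correct and rests on the same two pillars as the paper's: the retraction $\pi$ onto the target-degree-zero row (with $\pi\circ F=\id$ by co-unitality of $\delta$), and the acyclicity for $k\ge 1$ of the normalized complexes $C^{*,k}=Hom_{dg\mathcal{C}}(X,Y\otimes\square^k)^*_0$. You differ in how these are established and assembled. The paper is hands-on: given a cocycle $g$, it sets $g'=g-F\pi(g)$, locates the lowest nonzero target-degree component $g'_n$, observes that $p_{n,1}(g'_n)=0$ while $(p_{n,1})_*$ is a quasi-isomorphism (so $g'_n$ is a $d_X$-coboundary in the full Hom-complex), uses the idempotent splittings to push the primitive into the normalized subcomplex, and iterates; the product structure of $Hom_{\dg\mathcal{C}}$ makes the resulting infinite sequence of corrections a well-defined element. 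You instead filter by target degree and invoke the spectral sequence, and your acyclicity argument for $C^{*,k}$ is actually cleaner than the paper's: since the splitting $Hom_{dg\mathcal{C}}(X,Y\otimes\square^k)^*=C^{*,k}\oplus D^{*,k}$ is a direct sum of complexes and the quasi-isomorphism $(i_k)_*$ factors through the summand $D^{*,k}$, one gets $H^*(C^{*,k})=0$ in one step --- no iteration over the $k$ cocubical projections is required, so the worry in your final sentence is unfounded. The one point you elide is convergence: because $Hom_{\dg\mathcal{C}}(X,Y)^*$ is a product total complex, your filtration $F^k$ is complete as well as exhaustive, and since $E_1$ is concentrated in the $k=0$ column the spectral sequence degenerates; strong convergence then follows, e.g., from the Milnor $\lim^1$-sequence applied to the surjective tower $(F^1/F^k)_k$, all of whose terms are acyclic. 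The paper's iterative construction of the homotopy handles this completeness issue automatically.
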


\begin{proof} Let 
\[
\pi^{-n}: Hom_{\dg\mathcal{C}}(X, Y)^{-n}\to  Hom_{dg\mathcal{C}}(X, Y)^{-n}
\]
be the projection on the $Hom_{\mathcal{C}}(X\otimes\square^n, Y)$-component; this gives us the map of complexes
\[
\pi_{X,Y}:Hom_{\dg\mathcal{C}}(X, Y)^*\to  Hom_{dg\mathcal{C}}(X, Y)^*
\]
with $\pi_{X,Y}\circ F_{X,Y}=\id$. To show that $F_{X,Y}$ is a quasi-equivalence, it suffices to show that, for a given element $g=(g_n)\in Hom_{\dg\mathcal{C}}(X, Y)^p$ with $dg=0$, we can find an $h\in Hom_{\dg\mathcal{C}}(X, Y)^{p-1}$ with $g-F(\pi(g))=dh$. 

For this, we note that $g':=g-F(\pi(g))$ has $\pi(g')=0$, that is, the $Hom_{\mathcal{C}}(X\otimes\square^{-p}, Y)$-component of $g'$ is zero. Let $n>0$ be the minimal integer such that the  $Hom_{\mathcal{C}}(X\otimes\square^{n-p}, Y\otimes\square^n)$-component $g'_n$ of $g'$ is non-zero. Then the ``$X$-differential" $d_Xg'_n$ of $g'_n$ in 
$Hom_{\mathcal{C}}(X\otimes\square^{n-p-1}, Y\otimes\square^n)$ is zero, hence $g'_n$  defines a cohomology class in $H^{n-p}(Hom_{dg\mathcal{C}}(X, Y\otimes\square^n)^*)$. Also, by definition,
$p_{n,1}(g_n')=0$, but since $\square^*$ is homotopy invariant, the map on cohomology induced by $p_{n,1}$ is an isomorphism. Thus, there is an element $h_n\in Hom_{dg\mathcal{C}}(X, Y\otimes\square^n)^{n-p-1}$ with $d_Xh_n=g'_n$. Using the splittings $\eta_{n-1,i,1}$ to $p_{n,i}$, we can assume that $h_n$ is in the subgroup 
$Hom_{dg\mathcal{C}}(X, Y\otimes\square^n)^{n-p-1}_0$ of
of $Hom_{dg\mathcal{C}}(X, Y\otimes\square^n)^{n-p-1}$. We view $h_n$ as an element of 
$Hom_{\dg\mathcal{C}}(X, Y)^{p-1}$ by taking all other components to be zero.

Now we can replace with $g'$ with $g'':= g'-dh_n$ giving us a new element such that the minimal $m$ for which $g''$ has a  non-zero $Hom_{\mathcal{C}}(X\otimes\square^{m-p}, Y\otimes\square^m)$-component has $m>n$. Repeating, we construct a sequence of elements $h_n\in Hom_{\mathcal{C}}(X, Y\otimes\square^n))^{n-p-1}_0$ with
\[
d(h_n)=(g'_n)\text{ in } Hom_{\dg\mathcal{C}}(X, Y)^p,
\]
as desired.
\end{proof}

\phantomsection
\subsection{Extended multi-cubical complexes}\label{subsec:ExtMultiCube}

For later use, we combine the extended total complex construction with mult-cubes in the source. Let 
\[
Hom_{\mc{C}}(X,Y\otimes \square^m)_{k,0}^n\subset Hom_{\mc{C}}(X,Y\otimes \square^{m-1})_{k}^n
\]
be the intersection of the kernels of the maps
\[
p_{m,i}: Hom_{\mc{C}}(X,Y\otimes \square^m)_{k}^n\to  Hom_{\mc{C}}(X,Y\otimes \square^{m-1})_{k}^n
\]
$i=1,\ldots, m$, and let
\[
 Hom_{\mc{C}}(X,Y)_{k,\ext}^p:=\prod_{n+m=p} Hom_{\mc{C}}(X,Y\otimes \square^m)_{k}^n.
 \]
We note that 
\[
Hom_{\dg\mathcal{C}}(X,Y)^p=Hom_{\mc{C}}(X,Y)_{1,\ext}^p;
\]
just as in the case $k=1$, the differential  $d_X$ in $Hom_{\mc{C}}(X\otimes \square^m,Y)_{k}^*$ and the differential $d_Y$ formed using the co-cubical structure $m\mapsto Hom_{\mc{C}}(X\otimes \square^m,Y)_{k}^*$: For $f=(f_n)\in  Hom_{\mc{C}}(X,Y)_{k,\ext}^p$, with $f_n\in Hom_{\mc{C}}(X,Y\otimes \square^{p+n})_{k}^{-n}$, set 
\[
(df)_n:=f_{n+1}\circ d_X-(-1)^pd_Y\circ f_n.
\]

The maps \eqref{eqn:MultiCubeQIso} give rise to maps
\begin{equation}\label{eqn:MultiCubeQIso2}
\lambda_{k',k,\ext}:Hom_{\mc{C}}(X,Y)_{k',\ext}^*\to Hom_{\mc{C}}(X,Y)_{k,\ext}^*
\end{equation}
for $1\le k'<k$.

\begin{prop}\label{prop:ExtCubeQIso2}  Suppose that $\square^*$ is an extended co-cubical object with bi-multiplication. Suppose further that $\square^*$ is homotopy invariant. Then the map \eqref{eqn:MultiCubeQIso2} is a quasi-isomorphism.
\end{prop}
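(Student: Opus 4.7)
The strategy is a filtration argument that combines Lemma~\ref{lem:quasi1} with the splitting idea behind Proposition~\ref{prop:ExtCubeQIso}. First, by induction on $k-k'$, it suffices to treat the case $k' = k-1$.

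The plan is to filter both sides by the dimension $m$ of the target cube: define
\[
F^M Hom_{\mc{C}}(X,Y)_{k,\ext}^p := \prod_{\substack{n+m=p\\ m\ge M}} Hom_{\mc{C}}(X,Y\otimes\square^m)_{k,0}^{-n},
\]
and similarly for $k-1$. Inspecting the differential formulas in \S\ref{subsec:ExtDGCat} and \S\ref{subsec:ExtMultiCube}, one sees that $d_X$ preserves the $m$-grading while $d_Y$ strictly increases it; hence $F^\bullet$ is a descending filtration of subcomplexes, compatible with $\lambda_{k-1,k,\ext}$ (which touches only the source variables). Because $Hom_{\mc{C}}(X,Y\otimes\square^m)_{k,0}^{-n}$ vanishes unless both $n\ge 0$ and $m\ge 0$, for every fixed $p$ the filtration has at most $p+1$ nonzero steps; consequently the standard comparison for finite filtrations reduces the claim to showing that the map on each associated graded piece is a quasi-isomorphism.

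On the $m$-th associated graded level, the complex reduces to $Hom_{\mc{C}}(X,Y\otimes\square^m)_{k,0}^\ast$ with differential $d_X$, and the induced map of $\lambda_{k-1,k,\ext}$ is the restriction
\[
\lambda_{k-1,k}\big|_0 \colon Hom_{\mc{C}}(X,Y\otimes\square^m)_{k-1,0}^\ast \longrightarrow Hom_{\mc{C}}(X,Y\otimes\square^m)_{k,0}^\ast.
\]
Lemma~\ref{lem:quasi1} gives that the unrestricted map $\lambda_{k-1,k}$ is a quasi-isomorphism. To pass to the restriction, apply Lemma~\ref{cub} to the $m$-direction cubical structure: this yields a natural direct sum decomposition of each side into the non-degenerate subcomplex ``$_0$'' and a complementary degenerate summand built from the projectors $\pi_{m,m}$. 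Since $\lambda_{k-1,k}$ is entirely natural in the target variable, it respects this splitting, so the quasi-isomorphism $\lambda_{k-1,k}$ decomposes into a direct sum of two maps, both of which must then be quasi-isomorphisms. In particular $\lambda_{k-1,k}|_0$ is a quasi-isomorphism for every $m$, completing the induction step.

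The main points requiring care are the finiteness of the filtration (which is what ensures the associated-graded comparison is legitimate without further convergence hypotheses) and the compatibility of the $_0$-vs-degenerate splitting with $\lambda_{k-1,k}$; both fall out of the manifest commutation of source multi-cube operations with target cube operations inside $Hom_{\mc{C}}(X\otimes\square^{a_1}\otimes\cdots\otimes\square^{a_k},Y\otimes\square^m)$.
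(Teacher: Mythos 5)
Your associated-graded analysis is correct and is a reasonable alternative to the paper's proof, but the finiteness claim that the whole argument hinges on is false, so as written there is a genuine gap.

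The paper's own proof does not filter by the target-cube dimension at all: it first shows (by the same iterative/successive-approximation argument as in Proposition~\ref{prop:ExtCubeQIso}, now with multi-cubes in the source) that the projection $\pi_k \colon Hom_{\mc{C}}(X,Y)^*_{k,\ext}\to Hom_{\mc{C}}(X,Y)^*_k$ onto the $m=0$ factor is a quasi-isomorphism for every $k$, and then deduces the claim from the $2$-out-of-$3$ property in the square relating $\pi_k$, $\pi_{k'}$ and $\lambda_{k',k}$ (the last being a quasi-isomorphism by Lemma~\ref{lem:quasi1}). Your approach, via the filtration by the target-cube dimension $m$ and the identification of $\mathrm{gr}^m$ with $Hom_{\mc{C}}(X,Y\otimes\square^m)^*_{k,0}$ equipped with $d_X$ only, is genuinely different; the step where you split off the $0$-part via the idempotents coming from the sections $\eta_{m-1,i,1}$ of $p_{m,i}$, combine with Lemma~\ref{lem:quasi1}, and then use the fact that a block-diagonal map inducing an isomorphism on cohomology is a quasi-isomorphism on each block, is correct.

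The problem is the sentence asserting that ``for every fixed $p$ the filtration has at most $p+1$ nonzero steps.'' This is not true. The total object is a product $\prod_{m-n=p}Hom_{\mc{C}}(X,Y\otimes\square^m)^{-n}_{k,0}$ with $n\ge 0$ (the inner $k$-cubical complex is concentrated in nonpositive degrees), so for a fixed total degree $p$ the pairs $(n,m)$ are $(0,p),(1,p+1),(2,p+2),\dots$ --- the index $m$ is unbounded above. You seem to have implicitly changed the sign convention so that ``$n+m=p$ with $n,m\ge 0$'' truncates, but in the paper's conventions this does not happen, and indeed there is no reason for $Hom_{\mc{C}}(X,Y\otimes\square^m)^{-n}_{k,0}$ to vanish for large $m$. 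Consequently the ``standard comparison for finite filtrations'' is not available, and some convergence input is required: the filtration is complete and Hausdorff because the totalization is a product, and one has to run the successive-approximation argument (given a cycle $z$, solve $d\bar h_0 = \mathrm{gr}^0 z$, lift, replace $z$ by $z-dh_0\in F^1$, repeat, and use completeness to sum the $h_m$). That argument does work here, but you never invoke it, and it is in substance exactly the iterative argument the paper carries out in Proposition~\ref{prop:ExtCubeQIso} and then reuses. So the right fix is either to replace ``finite filtration'' by ``complete filtration'' and spell out the successive-approximation step, or to follow the paper and reduce to $\pi_k$.
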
 

\begin{proof} By our assumption that $\square^*$ is homotopy invariant, together with lemma~\ref{lem:quasi1}, the map
\[
p_{n*}:Hom_{\mc{C}}(X,Y\times\square^n)_{k}^*\to Hom_{\mc{C}}(X,Y)_{k}^*
\]
is a quasi-isomorphism for all $X$, $Y$, $k$ and $n$. The same proof as we used in proposition~\ref{prop:ExtCubeQIso} shows that the projection on the $Hom_{\mc{C}}(X,Y)_{k}^*$-factor gives a quasi-isomorphism
\[
\pi_k:Hom_{\mc{C}}(X,Y)_{k,\ext}^*\to Hom_{\mc{C}}(X,Y)_{k}^*.
\]

In addition, the diagram
\[
\xymatrix{
Hom_{\mc{C}}(X,Y)_{k',\ext}^*\ar[d]_{\lambda_{k',k,\ext}}\ar[r]^{\pi_{k'}}& Hom_{\mc{C}}(X,Y)_{k'}^*\ar[d]^{\lambda_{k',k}}
\\
Hom_{\mc{C}}(X,Y)_{k,\ext}^*\ar[r]_{\pi_k}& Hom_{\mc{C}}(X,Y)_{k}^*
}
\]
commutes; as the maps $\pi_k$, $\pi_{k'}$ and   $\lambda_{k',k}$ are quasi-isomorphisms, the  map
$\lambda_{k',k,\ext}$ is a quasi-isomorphism as well.
\end{proof}

\phantomsection
\section{DG categories of motives}
We briefly recall Levine's construction of the DG category of smooth motives over a base.
Let $S$ be a fixed regular scheme of finite Krull dimension. Let $\sm/S$ be the category of smooth $S$-schemes of finite type and $\mathbf{Proj}/S\subset\sm/S$
be the full subcategory of $\sm/S$ consisting smooth projective $S$-schemes.
\begin{dfn} For $X,Y\in \sm/S$, the group of finite correspondences, $Cor_S(X,Y)$, is defined to be the free abelian group on the integral closed subschemes
$W\subset X\times_SY$ such that the projection $W\to X$ is finite and surjective onto an irreducible component of $X$.
\end{dfn}

The category $Cor_S$ consists of the objects of $\sm/S$ and has morphisms
\[\Hom_{Cor_S}(X,Y):=Cor_S(X,Y) \]
where the composition of correspondences
\[ \circ:Cor_S(X,Y)\otimes Cor_S(Y,Z)\to Cor_S(X,Z) \]
is defined as
\[ W\circ W':= p_{XZ*}(p^*_{XY}(W)\cdot_{XYZ}p^*_{YZ}(W')) \]
where $\cdot_{XYZ}$ is the intersection product of cycles on $X\times_S Y\times_S Z$ and $p_{XY},p_{YZ}, p_{XZ}$ are the respective projections.

The product $\times_S$ extends to finite correspondences, making $Cor_S$ a tensor category.

 Assigning $\square^n_S= \mathbb{A}^n_S$ gives a co-cubical object in $Cor_S$. In fact, $\square^*_S$ extends to a functor
\[ 
\square^n_S:\mathbf{ECube}\to\sm/S \] sending $m:\ub{2}\to\ub{1}$ to the usual multiplication
\[m_S:\square^2_S\to\square^1_S;\quad \mu_S(x,y)=xy 
\]

Since the tensor product in $Cor_S$ arises from the product in $\sm/S$, we have the identity
\[
\square^n_S=(\square^1_S)^{\otimes n};
\]
the collection of identity maps thus gives a multiplication $\mu_{**/S}$ on the extended co-cubical object $\square_S^*$. The diagonal 
\[
\delta_{\square_S^n}:\square^n_S\to \square^n_S\times_S\square^n_S=\square^n_S\otimes\square^n_S
\]
gives the co-multiplication $\delta$ on $\square^*$. It is easy to check that $(\mu_{**},\delta^*)$ defines a bi-multiplication on the extended co-cubical object $\square^*$.

\begin{dfn} We define the category $dgCor_S=(Cor_S,\otimes,\square^*_S,\delta^*)$. Let $dgPrCor_S$ be the full subcategory of $dgCor_S$ with objects in $\mathbf{Proj}/S$.
\end{dfn}

\begin{prop}\label{prop:CorS}  1. $Cor_S$ is a tensor category\\
\\
2. The identity multiplication $\mu_{**}$ and diagonal co-multiplication $\delta^*$ define a bi-multiplication on the co-cubical object $\square^*_S$.\\
\\
3. The co-cubical object $\square^*_S$ is homotopy invariant.
\end{prop}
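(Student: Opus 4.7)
The three assertions are largely independent, so I would address them in order.

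For part~(1), I extend $\times_S$ on $\sm/S$ to finite correspondences via external product of cycles: for $\alpha\in Cor_S(X,Y)$ and $\beta\in Cor_S(X',Y')$, the product $\alpha\otimes\beta:=\alpha\times_S\beta\subset(X\times_S X')\times_S(Y\times_S Y')$ is a finite correspondence because each factor is, and compatibility with composition is the classical commutativity of intersection product with external product. Associativity, symmetry, and the unit $S=\square^0$ are inherited from $\sm/S$ via the graph functor. For part~(2), by construction $\square^n_S=(\square^1_S)^{\otimes n}=\A^n_S$, so $\mu_{n,m}:\square^n_S\otimes\square^m_S\to\square^{n+m}_S$ is literally the identity. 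Unfolding $\delta_{n,m}$ as
\[
\A^{n+m}_S\xrightarrow{\delta}\A^{n+m}_S\times_S\A^{n+m}_S\xrightarrow{p^1_{n,m}\times p^2_{n,m}}\A^n_S\times_S\A^m_S
\]
shows it sends $(x_1,\ldots,x_{n+m})$ to $((x_1,\ldots,x_n),(x_{n+1},\ldots,x_{n+m}))$, which under $\A^n_S\times_S\A^m_S=\A^{n+m}_S$ is also the identity. So $\mu_{n,m}$ and $\delta_{n,m}$ are mutually inverse identity morphisms.

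Part~(3) is the substantive step: I must show that $i_n:\square^0\to\square^n$, the inclusion of the origin $0^n$, induces a quasi-isomorphism
\[
i_{n*}:Hom_{dgCor_S}(X,Y)^*\to Hom_{dgCor_S}(X,Y\otimes\square^n)^*
\]
for all $X,Y\in\sm/S$. Since the structure projection $p_n:\square^n\to\square^0$ splits $i_n$, it suffices to exhibit a chain homotopy between $\id$ and $(i_n\circ p_n)_*$ on the target complex. The plan is to use the $\A^1$-contraction
\[
H_n:\square^n\times_S\square^1\to\square^n,\quad (x_1,\ldots,x_n,t)\mapsto(tx_1,\ldots,tx_n),
\]
a morphism in $\sm/S$ assembled from $n$ copies of the multiplication $m:\square^2\to\square^1$ supplied by the extended cubical structure on $\square^*_S$. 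At $t=1$ it is the identity; at $t=0$ it factors as $\square^n\xrightarrow{p_n}\square^0\xrightarrow{i_n}\square^n$. For $f\in Cor_S(X\times_S\square^m,Y\times_S\square^n)$ I define
\[
h_m(f):=(\id_Y\times H_n)\circ(f\times\id_{\square^1})\in Cor_S(X\times_S\square^{m+1},Y\times_S\square^n),
\]
placing the homotopy parameter $t$ as the last factor of $\square^{m+1}$. The face at position $m+1$ recovers $f$ at $\epsilon=1$ and $i_n\circ p_n\circ f$ at $\epsilon=0$, while the faces at positions $i\le m$ commute with $h_m$. Assembling these with the cubical signs $(-1)^i$, and absorbing an overall $m$-dependent sign by redefining $h_m$ up to $(-1)^{m+1}$, yields the identity $dh+hd=\id-i_n\circ p_n$. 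A direct check shows that $h_m$ carries degenerate correspondences to degenerate ones, so it descends to the non-degenerate Hom complex, exhibiting $i_{n*}$ as a homotopy equivalence and in particular a quasi-isomorphism.

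The main obstacle is the sign and degeneracy bookkeeping in~(3); the geometric content is just the $\A^1$-deformation retraction of $\A^n$ onto its origin, and this is precisely why the multiplication $m:\square^2\to\square^1$ was built into the extended cubical structure on $\square^*_S$.
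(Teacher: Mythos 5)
Your handling of parts (1) and (2) matches what the paper sketches before the proposition. For part (3) the geometric idea — a fibrewise $\A^1$-contraction onto the origin — is the same one the paper imports from Friedlander--Voevodsky, but your execution diverges in two places. First, you contract all $n$ target coordinates simultaneously via $H_n(\bar{x},t)=(tx_1,\ldots,tx_n)$, whereas the paper uses $\square^{n+1}_S=\square^n_S\otimes\square^1_S$ to reduce by induction to the case $n=1$ and then contracts a single coordinate. Second, and more substantively, you package the homotopy as a composition $(\id_Y\times H_n)\circ(f\times\id_{\square^1})$ of finite correspondences, so well-definedness follows from the general fact that composition of finite correspondences is defined; the paper instead writes $h_n=q_{n*}\circ p^*$ as a cycle pullback followed by a pushforward along a non-proper self-map $q_n$ of the ambient variety, and must then verify directly that $q_n$ restricted to supports that are finite over the source is proper (exhibiting $k[X_1,X_2]/(f)$ as a finite $k[T_1,T_2]$-module via $T_1=X_1$, $T_2=X_1X_2$). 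For $n=1$ the two constructions produce the same cycle, so your framing simply trades the explicit properness argument for an appeal to the general theory, and the simultaneous scaling trades an induction on $n$ for slightly heavier sign bookkeeping; your sign normalization $(-1)^{m+1}h_m$ is the same one the paper lands on. One small inaccuracy: you say $H_n$ is ``assembled from $n$ copies of the multiplication $m$'' in $\mathbf{ECube}$, but it also requires the diagonal $t\mapsto(t,\ldots,t)$, which is not a morphism of $\mathbf{ECube}$. This is harmless — both proofs only need the relevant scaling map to be a morphism of $S$-schemes, not of $\mathbf{ECube}$ (the paper's $(p_1,m_S)$ has the same feature) — but the phrasing overstates what the extended cubical structure supplies.
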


\begin{proof} We have already remarked on (1) and (2). The proof of (3) follows the proof of the homotopy invariance of the simplicial Suslin complex given in \cite[lemma 4.1]{FriedlanderVoevodsky}, which we recall for the convenience of the reader. 

Since $Cor_S$ is an additive category with disjoint union of schemes inducing the direct sum, we may assume that $X$ and $S$ are irreducible.  We need to show that, for $i_n:\un{0}\to\un{n}$ the inclusion with image $0^n$, the map
\[
i_{n*}:Hom_{dgCor_S}(X, Y)^*\to Hom_{dgCor_S}(X, Y\otimes \square_S^n)^*
\]
is a quasi-isomorphism.

Since $\square^{n+1}_S=\square^n_S\otimes\square^1_S$, we need only show that for the $0$-section
$i:Y\to Y\times\A^1=Y\times_S \square_S^1$, the induced map
\[
i_{*}:Hom_{dgCor_S}(X, Y)^*\to Hom_{dgCor_S}(X, Y\otimes \square_S^1)^*=Hom_{dgCor_S}(X, Y\times_S \square_S^1)^*
\]
is a quasi-isomorphism. We show in fact that $i_{*}$ is a homotopy equivalence with homotopy inverse the map 
\[
p_*:Hom_{dgCor_S}(X, Y\times_S \square_S^1)^*\to Hom_{dgCor_S}(X, Y)^*.
\]
As $p_*\circ i_*=\id$, we need to show that $i_*\circ p_*$ is homotopic to the identity on $Hom_{dgCor_S}(X, Y\times_S \square_S^1)^*$.

For this, recall that $Hom_{Cor_S}(X\times_S\square_S^n, Y\times_S \square^1)^n$ is the free abelian group on the integral closed subschemes $W\subset X\times_S\square_S^n\times Y\times_S\square_S^1$ such that the projection $W\to X\times\square^n_S$ is finite and surjective. Let
\[
p:\square^{n+1}_S\to \square^n_S
\]
be the projection on the first $n$ factors. We have the multiplication map
\[
m_S:\square_S^1\times_S\square^1_S\to \square^1_S
\]
giving us the map
\[
(p_1,m_S):\square_S^1\times_S\square^1_S\to \square^1_S\times_S\square^1_S
\]
sending $(x,y)$ to $(x,xy)$.

For a cycle $Z\in Hom_{Cor_S}(X\times_S\square_S^n, Y\times_S \square^1)$, we associate the cycle $p^*(Z)\in Hom_{Cor_S}(X\times_S\square_S^{n+1}, Y\times_S \square^1)$.  Next, we apply the map
\[
q_n:X\times_S\square_S^{n+1}\times_SY\times_S \square^1\to X\times_S\square_S^{n+1}\times_SY\times_S \square^1
\]
defined as the composition
\begin{align*}
X\times\square_S^{n+1}\times Y\times\square_S^1
&=X\times\square_S^{n}\times_S\square^1_S\times_SY\times_S\square_S^1\\
&\xrightarrow{\tau}X\times\square_S^{n}\times_SY\times_S\square^1_S\times_S\square_S^1\\
&\xrightarrow{\id\times_S(p_1,m_S)}X\times\square_S^{n}\times Y\times_S\square^1_S\times_S\square_S^1\\
&\xrightarrow{\tau^{-1}}X\times\square_S^{n}\times_S\square^1_S\times_SY\times_S\square_S^1\\
&=X\times\square_S^{n+1}\times Y\times\square_S^1.
\end{align*}
We would like to form the cycle $q_{n*}(p^*(Z))$; the problem is that $q_n$ is not a proper morphism. However one shows that the restriction of $q_n$ to a closed subset $W$ of $X\times\square_S^{n+1}\times Y\times\square_S^1$ which is finite over $X\times\square_S^{n+1}$ is proper, which suffices.

 To see this, we note that $q_n$ is a morphism over $X\times\square_S^{n}\times_SY$, which reduces us to showing that the   map $(p_1,m_S)$ is proper when restricted to a closed subscheme $W\subset 
\square_S^1\times_S\square^1_S$ which is finite over $\square^1_S$ via the first projection. We may enlarge $W$, and thus we may assume that $W$ is given by a monic equation of the form
\[
f(X_1, X_2):=X_2^n+\sum_{i=0}^{n-1}a_i(X_1)X_2^i=0;]\quad a_i(X_1)\in k[X_1].
\]
The map $(p_1,m_S)$ restricted to $W$ is then given by the map
\[
(p_1,m_S)^*:k[T_1, T_2]\to k[X_1,X_2]/(f)
\]
sending $T_1$ to $X_1$ and $T_2$ to $X_1X_2$, so it is clear that $k[X_1,X_2]/(f)$ is a finite 
$k[T_1, T_2]$-module. We therefore have the well-defined map
\[
h_n:=q_{n*}\circ p^*:Hom_{Cor_S}(X\times_S\square_S^n, Y\times_S \square^1)\to
Hom_{Cor_S}(X\times_S\square_S^{n+1}, Y\times_S \square^1)
\]

The maps $h_n$ satisfy the following relations:
\begin{enumerate}
\item Let $f:\un{n}\to \un{m}$ be a map in $\Cube$, giving the map  
$f\times\id:\un{n+1}\to \un{m+1}$, and maps
\[
f^*:Hom_{Cor_S}(X\times_S\square_S^{n}, Y\times_S \square^1)\to Hom_{Cor_S}(X\times_S\square_S^{m}, Y\times_S \square^1,
\]
\[
(f\times\id)^*:Hom_{Cor_S}(X\times_S\square_S^{n+1}, Y\times_S \square^1)\to Hom_{Cor_S}(X\times_S\square_S^{m+1}, Y\times_S \square^1
\]
Then 
\[
h_m\circ  f^*=(f\times\id)^*\circ h_n.
\]
\item $\eta_{n,n+1,1}^*\circ h_n=\id$, $\eta_{n,n+1,0}^*\circ h_n=i_{*}\circ p_*$.
\end{enumerate}
The relation (1) applied to the projections $p_{n,i}$ shows that the maps $h_n$ descend to maps on the non-degenerate quotients
\[
\bar{h}_n:Hom_{dgCor_S}(X, Y\times_S \square_S^1)^{-n}\to Hom_{dgCor_S}(X, Y\times_S \square_S^1)^{-n-1}.
\]
The relations (1) and (2)  show that  the collection of maps $(-1)^{n+1}\bar{h}_n$ give a homotopy between $\id$ and $i_{*}\circ p_*$.
\end{proof}


Let $S$ be a fixed regular scheme of finite Krull dimension. Fix a Grothendieck topology $\tau$ on some full subcategory $Opn^\tau_S$ of $\mathbf{Sch}_S$.
Suppose
we have a pre-sheaf
of DG categories $U\ra \mathcal{C}(U)$
on $S_\tau$.
For objects $X$ and $Y$ in $\mathcal{C}(S)$, we have the presheaf 
\[ [f:U\ra S]\mapsto Hom_{\mathcal{C}(U)}(f^*X,f^*Y), \]
which we denote 
by $\ub{Hom}_{\mathcal{C}}^\tau(X,Y)$.

Let $R\Gamma(S,\mathcal{C})$ be the DG category with the same objects as $\mathcal{C}(S)$ and with Hom-complex
\[ 
Hom_{R\Gamma(S,\mathcal{C})}(X,Y)^*=\Tot(G^*(\ub{Hom}_\mathcal{C}^\tau(X,Y))^*(S)).
 \]
%
The composition law is defined using the Alexander-Whitney map \eqref{eqn:AWMap1} composed with the composition law on the presheaf of DG categories $U\ra \mathcal{C}(U)$.
For a proof that the composition of morphism is an associative map of complexes, see \cite[ \S 3.1]{smmot}
(but note that we have introduced a sign-correction in both the Godement resolution and the Alexander-Whitney map, which was missing in \cite[\hbox{\it loc. cit}]{smmot}).

\begin{dfn}
We denote by $\ub{dgPrCor}_S$ the   Zariski presheaf of DG categories
\[U\mapsto dgPrCor_U\]
The DG category of smooth effective motives over $S$ is defined to be 
\[dgSmMot^{\mathrm{eff}}_S := \mathcal{C}^b_{dg}(R\Gamma(S,\ub{dgPrCor}_S)). \]
taking $\tau$ to be the Zariski topology
on the category $Opn^{Zar}_S$  of Zariski open subsets of $S$.
The triangulated category, $SmMot^{\mathrm{eff}}_{gm}(S)$ of smooth effective geometric motives over $S$ is defined as the idempotent
completion of the homotopy category $H^0dgSmMot^{\mathrm{eff}}_S$.
\end{dfn}
The triangulated category of smooth motives over $S$, $SmMot_{gm}(S)$ is the triangulated category formed by inverting
$\otimes \mathbb{L}$ on $SmMot^{\mathrm{eff}}_{gm}(S)$.

\begin{rem} \label{rem:SLocal} Suppose that $S=\text{Spec } \mc{O}_{X,v}$ for $X$ a smooth scheme of finite type over $k$, and $v$ a finite set of points of $X$. Suppose that the field $k$ has characteristic zero. Then by \cite[corollary 5.6]{smmot} together with \cite[theorem 8.1]{FriedlanderVoevodsky}, the natural functor
\[
\Gamma(S,\ub{dgPrCor}_S)\to R\Gamma(S,\ub{dgPrCor}_S)
\]
is a quasi-equivalence of DG categories. Thus, we have the quasi-equivalence of DG categories
\[
\mathcal{C}^b_{dg}(\Gamma(S,\ub{dgPrCor}_S))\to \mathcal{C}^b_{dg}(R\Gamma(S,\ub{dgPrCor}_S))=
dgSmMot^{\mathrm{eff}}_S
\]
and therefore the idempotent completion of $H^0\mathcal{C}^b_{dg}(\Gamma(S,\ub{dgPrCor}_S))$ is equivalent to 
$SmMot_{gm}(S)$. 
\end{rem}

\cleardoublepage
\phantomsection
\chapter{Pseudo-tensor structure}
\phantomsection
\section{Pseudo-tensor structure on DG categories}
\begin{dfn}[\cite{pseudo}, pg. 11--14] 
A pseudo-tensor structure on an additive category $\mathcal{A}$ is the following datum:
\begin{enumerate}
\item For any finite non-empty set $I$, an $I$-family of objects $X_i\in\mathcal{A}$, $i\in I$, and an object $Y\in\mathcal{A}$, 
we have an abelian group $P^\mathcal{A}_I(\{X_i\}_{i\in I},Y)$. 

\noindent [We denote $P^\mathcal{A}_n(\{X_i\}_{i=1}^n,Y):=P^\mathcal{A}_{\{1,\ldots,n\}}(\{X_i\}_{i\in I},Y)$.]
\item Given any surjective map of finite sets $\pi:J\twoheadrightarrow I$, we have the composition map
\[ P^\mathcal{A}_I(\{Y_i\},Z)\otimes\prod_{i\in I}P^\mathcal{A}_{J_i}(\{X_j\}_{j\in J_i},Y_i)\longrightarrow P^\mathcal{A}_J(\{X_j\},Z), \quad (f,(g_i))\mapsto f(g_i)\]
where $J_i:=\pi^{-1}(i)$.  
\end{enumerate}
The following properties must hold:
\begin{enumerate}
\item The composition is associative: for another surjective map $K\twoheadrightarrow J$, $\{W_k\}$ an $K$-family of objects, and $h_j \in P^\mathcal{A}_{K_j}(\{W_k\}_{k\in K_j},X_j)$,
we have $f(g_i(h_j))=(f(g_i))(h_j)\in P^\mathcal{A}_K(\{W_k\},Z)$.
\item For any object $E\in \mathcal{A}$, there is an element $id_E\in P^\mathcal{A}_{1}(\{E\},E)$ with $\del id_E=0$ such that for any $f\in P^\mathcal{A}_I(\{X_i\},Y)$, we have $id_Y(f)=f(id_{X_i})=f$.
\end{enumerate}
\end{dfn}

Now, let $\mathcal{C}$ be an additive DG category. We define the notion of a pseudo-tensor structure on $\mathcal{C}$.
\begin{dfn} \label{pstensor}
A pseudo-tensor structure on $\mathcal{C}$ is the following datum:
\begin{enumerate}
\item For any finite non-empty set $I$, an $I$-family of objects $X_i\in\mathcal{C}$, $i\in I$, and an object $Y\in\mathcal{C}$, 
we have a complex of abelian groups $(P^\mathcal{C}_I(\{X_i\}_{i\in I},Y)^*,\del)$. 

\noindent [We denote $P^\mathcal{C}_n(\{X_i\}_{i=1}^n,Y)^*:=P^\mathcal{C}_{\{1,\ldots,n\}}(\{X_i\}_{i\in I},Y)^*$.]
\item Given any surjective map of finite sets $\pi:J\twoheadrightarrow I$, we have the composition map
\[ P^\mathcal{C}_I(\{Y_i\},Z)^m\otimes\bigotimes_{i\in I}P^\mathcal{C}_{J_i}(\{X_j\}_{j\in J_i},Y_i)^{n_i}\longrightarrow P^\mathcal{C}_J(\{X_j\},Z)^{m+\sum n_i}, \quad (f,(g_i))\mapsto f(g_i)\]
where $J_i:=\pi^{-1}(i)$.  
\end{enumerate}
The following properties must hold:
\begin{enumerate}
\item The composition is a map of complexes: 
\[\del(f(g_i))=(\del f)(g_i)+\sum_{k=1}^p(-1)^{m+\sum_{i=1}^{k-1}n_i}f(g_1,\ldots,g_{k-1}, \del g_k,g_{k+1},\ldots,g_{p})\]
taking $I=\{1,\ldots,p\}$.
\item The composition is associative: for another surjective map $K\twoheadrightarrow J$, $\{W_k\}$ an $K$-family of objects, and $h_j \in P^\mathcal{C}_{K_j}(\{W_k\}_{k\in K_j},X_j)^{p_j}$,
we have in $P^\mathcal{C}_K(\{W_k\},Z)^{m+\sum n_i+\sum p_j}$,
\[f(g_i(h_j))=(-1)^{\sum_{i=1}^p(\sum_{l=1}^{i-1}(\sum_{j\in J_l} p_j)) n_i}(f(g_i))(h_j).\]
\item For any object $E\in \mathcal{C}$, there is an element $id_E\in P^\mathcal{C}_{1}(\{E\},E)^0$ with $\del id_E=0$ such that for any $f\in P^\mathcal{C}_I(\{X_i\},Y)^n$, we have $id_Y(f)=f(id_{X_i})=f$.
\end{enumerate}
\end{dfn}
\begin{rem}
The condition for associativity stated above is precisely the commutativity of the following diagram.
\[\begin{xy}
\xymatrix{\ds P_I(\{Y_i\},Z)\otimes\bigotimes_{i\in I}\big(P_{J_i}(\{X_j\},Y_i)\otimes\bigotimes_{j\in J_i}P_{K_j}(\{W_k\},X_j)\big)\hspace{-17mm}\ar[dr] \ar[dd]^\theta
& \\
& \hspace{-17mm}\ds P_I(\{Y_i\},Z)\otimes\bigotimes_{i\in I}\big(P_{K_i}(\{W_k\},X_j)\ar[d] \\
\ds \big(P_I(\{Y_i\},Z)\otimes\bigotimes_{i\in I}P_{J_i}(\{X_j\},Y_i)\big)\otimes\bigotimes_{j\in J}P_{K_j}(\{W_k\},X_j)\hspace{-17mm}\ar[dr] & E \\
&
\hspace{-17mm}\ds P_J(\{X_j\},Z)\otimes\bigotimes_{j\in J}P_{K_j}(\{W_k\},X_j) \ar[u] } \end{xy}\]
The sign that comes into the associativity condition is to make sure that $\theta$ is a map of complexes.
\end{rem}
\begin{lem}
If $P^\mathcal{C}_I$ is a pseudo-tensor structure on a DG category $\mathcal{C}$, then it induces a pseudo-tensor structure on the homotopy category 
$H^0\mathcal{C}$.
\end{lem}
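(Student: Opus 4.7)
The plan is to define
\[
P^{H^0\mathcal{C}}_I(\{X_i\}_{i\in I},Y):=H^0\bigl(P^{\mathcal{C}}_I(\{X_i\}_{i\in I},Y)^*,\del\bigr),
\]
and to check that the datum of Definition~\ref{pstensor} descends to this Hom-group. The three things to verify are (a) that composition of cohomology classes is well-defined, (b) associativity, and (c) the identity axiom.

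For (a), represent classes in $H^0P^{\mathcal{C}}_I(\{Y_i\},Z)$ and each $H^0P^{\mathcal{C}}_{J_i}(\{X_j\}_{j\in J_i},Y_i)$ by $0$-cycles $f$ and $g_i$, and set $[f]([g_i]):=[f(g_i)]$. That $f(g_i)$ is a $0$-cycle follows immediately from property (1) of Definition~\ref{pstensor}: with $m=0$ and all $n_i=0$ and $\del f=0$, $\del g_i=0$ for each $i$, the Leibniz-type identity
\[
\del(f(g_i))=(\del f)(g_i)+\sum_{k}(-1)^{\sum_{i<k}n_i}f(g_1,\ldots,\del g_k,\ldots,g_p)
\]
collapses to $0$. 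If we replace $f$ by $f+\del F$ with $F$ of degree $-1$, the same identity (applied to $F$ instead of $f$) gives $(\del F)(g_i)=\del(F(g_i))$, so the class $[f(g_i)]$ is unchanged; similarly, replacing some $g_k$ by $g_k+\del G_k$ with $G_k$ of degree $-1$ and all other arguments cycles yields $f(g_1,\ldots,\del G_k,\ldots,g_p)=\pm\del(f(g_1,\ldots,G_k,\ldots,g_p))$, which is a boundary. Hence the assignment $([f],[g_i])\mapsto[f(g_i)]$ is a well-defined map on cohomology.

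For (b), the associativity condition in Definition~\ref{pstensor} picks up the sign
\[
(-1)^{\sum_{i=1}^p\bigl(\sum_{l<i}\sum_{j\in J_l}p_j\bigr)n_i},
\]
which is $+1$ as soon as all $n_i$ and $p_j$ vanish. Choosing $0$-cycle representatives at every level, the identity $f(g_i(h_j))=(f(g_i))(h_j)$ in $Z^0P^{\mathcal{C}}_K(\{W_k\},Z)$ from $\mathcal{C}$ descends to the desired associativity in $H^0\mathcal{C}$. For (c), the element $id_E\in P^{\mathcal{C}}_1(\{E\},E)^0$ satisfies $\del id_E=0$, so its class $[id_E]\in H^0P^{\mathcal{C}}_1(\{E\},E)$ is defined; the identities $id_Y(f)=f=f(id_{X_i})$ hold already at the level of cycles, hence pass to cohomology.

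The main (minor) point requiring care is the sign-bookkeeping in Step~(a): once one writes out the Leibniz-type rule with all relevant degrees set to $0$, everything collapses, but one should verify this for each variable separately rather than just for $f$. No genuine obstacle arises beyond this routine check, and the result follows.
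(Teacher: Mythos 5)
Your proof is correct and follows essentially the same approach as the paper: define $P^{H^0\mathcal{C}}_I := H^0 P^{\mathcal{C}}_I$, use the fact that composition is a map of complexes to descend to cohomology, and observe that the associativity sign vanishes in degree zero while the identity element passes through. You have simply spelled out the cycle-and-boundary bookkeeping that the paper leaves implicit.
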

\begin{proof}
Let $P^{H^0\mathcal{C}}_I(\{X_i\},Y):=H^0P^\mathcal{C}_I(\{X_i\},Y)^*$. We want to show that $P^{H^0\mathcal{C}}_I$ is a pseudo-tensor structure on 
$H^0\mathcal{C}$. Since composition map on $P^\mathcal{C}$ is a map of complexes, we have the map
\[ P^{H^0\mathcal{C}}_I(\{Y_i\},Z)\otimes\prod_{i\in I}P^{H^0\mathcal{C}}_{J_i}(\{X_j\}_{j\in J_i},Y_i)\longrightarrow P^{H^0\mathcal{C}}_J(\{X_j\},Z)\]
Associativity of composition is clear and $id_X\in Z^0P^\mathcal{C}_1(\{X\},X)$ implies that its image in $H^0P^\mathcal{C}_1(\{X\},X)$ has the required properties.
\end{proof}
\begin{lem}
A pseudo-tensor structure $P^\mathcal{C}_I(\{X_i\},Y)$ on a DG category $\mathcal{C}$ is functorial in each of the $X_i$'s. The same is true for a pseudo-tensor structure on an additive category.
\end{lem}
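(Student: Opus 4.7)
The plan is to extract functoriality in each slot directly from the composition axiom~(2), applied to the identity surjection $\pi = \id_I : I \to I$, for which every fibre $J_i = \{i\}$ is a singleton. In that case the composition law specialises to a pairing
\[
P^{\mathcal{C}}_I(\{Y_i\}, Z)^m \otimes \bigotimes_{i \in I} P^{\mathcal{C}}_1(\{X_i\}, Y_i)^{n_i} \to P^{\mathcal{C}}_I(\{X_i\}, Z)^{m + \sum n_i}.
\]
Fix $i_0 \in I$ and suppose given a morphism $\phi : X'_{i_0} \to X_{i_0}$ of degree $p$, viewed as an element of $P^{\mathcal{C}}_1(\{X'_{i_0}\}, X_{i_0})^p$. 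Setting $X'_i := X_i$ for $i \neq i_0$ and inserting $\id_{X_i}$ in those slots and $\phi$ in the $i_0$-slot, I would define
\[
\phi^*(f) := f(\id_{X_1}, \ldots, \phi, \ldots, \id_{X_n}),
\]
which lies in $P^{\mathcal{C}}_I(\{X_i\}_{i \neq i_0} \sqcup \{X'_{i_0}\}, Y)^{m+p}$.

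The second step is to check that $\phi^*$ is additive in $\phi$ and a morphism of complexes of degree $p$. Additivity is immediate from additivity of the composition pairing. Compatibility with the differential follows from the Leibniz rule of axiom~(1) together with $\del \id_{X_i} = 0$ from axiom~(3): every term in the Leibniz expansion involving $\del \id_{X_i}$ vanishes, leaving
\[
\del(\phi^*(f)) = \phi^*(\del f) + (-1)^m (\del \phi)^*(f),
\]
which is precisely the formula for a degree-$p$ map of complexes.

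The third step is to verify the functor axioms. One has $(\id_{X_{i_0}})^* = \id$ directly from axiom~(3). For composability, given a further morphism $\psi : X''_{i_0} \to X'_{i_0}$ of degree $q$, the identity $(\phi \circ \psi)^* = \psi^* \circ \phi^*$ follows from the associativity axiom~(2) with $K = J = I$ and both surjections the identity, with nontrivial entries only in the $i_0$-slot. The combinatorial sign $(-1)^{\sum_i (\sum_{l<i} \sum_{j \in J_l} p_j) n_i}$ in axiom~(2) collapses to $+1$, since every $p_l$ and $n_l$ with $l \neq i_0$ vanishes, and the composition $\phi \circ \psi$ in $P^{\mathcal{C}}_1$ is interpreted via the identity embedding.

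I expect no real obstacle beyond the Koszul-sign bookkeeping in axiom~(2), which reduces to the trivial case as indicated above. The additive-category version is obtained by restricting the entire argument to degree zero, in which case the differential considerations disappear and the argument is formally identical but simpler.
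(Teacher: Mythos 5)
Your argument is correct and follows essentially the same route as the paper: define $\phi^*(f)$ by inserting $\phi$ in the chosen slot and $\id$ everywhere else, then derive $\id^*=\id$ from the unit axiom and $(\phi\circ\psi)^*=\psi^*\circ\phi^*$ from associativity. The paper states this more tersely (without spelling out the sign bookkeeping you do), but the method is identical.
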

\begin{proof}
Let $f:X'_k\ra X_k$ be a morphism of degree $m$ for any $k\in I=\{1,\ldots,n\}$. Then, we have a map 
\[P^\mathcal{C}_n(\{X_i\}_{i=1}^n,Y)\stackrel{f^*}{\longrightarrow}P^\mathcal{C}_n(\{X_1,\ldots,X_{k-1},X'_k,X_{k+1},\ldots,X_n\},Y) \] 
given by $g\mapsto g(id_{X_1},\ldots,id_{X_{k-1}},f,id_{X_{k+1}},\ldots,id_{X_n})$. Note that $f^*$ is a map of degree $m$ in $C_{dg}(\mathbf{Ab})$.
Clearly, $id_{X_k}^*=id$ and it follows from the associativity of composition that $(f_1\circ f_2)^*=f_2^*\circ f_1^*$.
\end{proof}
\begin{cor}
The functor $P^\mathcal{C}_I$ is additive in each component.
\end{cor}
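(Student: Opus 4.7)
The plan is to deduce the corollary directly from the $\mathbb{Z}$-multilinearity that is already built into the definition of the composition law of a pseudo-tensor structure. Recall from the preceding lemma that, for each morphism $f:X_k'\to X_k$ of degree $m$ and each index $k\in I=\{1,\ldots,n\}$, the functoriality map is given by
\[
f^{*}(g)\;:=\;g\bigl(id_{X_1},\ldots,id_{X_{k-1}},f,id_{X_{k+1}},\ldots,id_{X_n}\bigr),
\]
which is a degree-$m$ map from $P^\mathcal{C}_n(\{X_i\},Y)^*$ to $P^\mathcal{C}_n(\{X_1,\ldots,X_k',\ldots,X_n\},Y)^*$, satisfying $id_{X_k}^{*}=id$ and $(f_1\circ f_2)^{*}=f_2^{*}\circ f_1^{*}$.

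The only additional input needed is that the composition law
\[
P^\mathcal{C}_I(\{Y_i\},Z)^m\otimes\bigotimes_{i\in I}P^\mathcal{C}_{J_i}(\{X_j\}_{j\in J_i},Y_i)^{n_i}\longrightarrow P^\mathcal{C}_J(\{X_j\},Z)^{m+\sum n_i}
\]
is, by the very shape of Definition \ref{pstensor}, a homomorphism of abelian groups out of a $\mathbb{Z}$-tensor product, hence $\mathbb{Z}$-multilinear in all of its slots. Specializing to the setup of the preceding lemma, fix $g\in P^\mathcal{C}_n(\{X_i\},Y)^*$ and fix all entries except the $k$-th at $id_{X_i}$; then the assignment $f\mapsto g(id,\ldots,f,\ldots,id)$ is a group homomorphism in $f$. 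In particular $(f_1+f_2)^{*}=f_1^{*}+f_2^{*}$ on the nose, and $0^{*}=0$.

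Combined with the functoriality statement of the previous lemma, this says that for each fixed choice of the remaining $X_i$'s ($i\neq k$) and of $Y$, the assignment $X_k\mapsto P^\mathcal{C}_I(\{X_i\},Y)^{*}$ is a contravariant DG functor from $\mathcal{C}$ to $C_{dg}(\mathbf{Ab})$ whose action on morphisms is a homomorphism of abelian groups of the appropriate degree; equivalently, it carries finite direct sums of $X_k$ to direct sums of complexes, by the standard splitting-idempotent argument using $id_{X\oplus X'}=i_1\circ p_1+i_2\circ p_2$ together with the additivity of $f\mapsto f^{*}$. The same argument applies in each index $k\in I$, yielding additivity in every component.

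There is no real obstacle here: once one notices that the composition is defined on $\otimes_{\mathbb{Z}}$, bilinearity is automatic, and the corollary is essentially a reformulation of that bilinearity. The only bookkeeping point is to track degrees so that $f^{*}$ is recognized as a well-defined morphism in the DG category $C_{dg}(\mathbf{Ab})$, which is immediate from the definition.
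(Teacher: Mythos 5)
The paper states this corollary without proof, treating it as an immediate consequence of the preceding functoriality lemma. Your argument fills in exactly the expected details: the composition law of Definition \ref{pstensor} is a homomorphism out of a tensor product of abelian groups, hence multilinear, so $f \mapsto f^{*} = g(id,\ldots,f,\ldots,id)$ is a group homomorphism in $f$, and the usual biproduct-splitting argument then yields preservation of finite direct sums. This is correct and is the argument the paper implicitly relies on.
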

\begin{lem} 
If $\pi:J\simeq I$ is a bijection and the objects $\{X_j\}_{j\in J}$ is a permutation of the objects $\{Y_i\}_{i_I}$ such that $X_{\pi^{-1}(i)}=Y_i$, then
\[ P^\mathcal{C}_I(\{Y_i\},\blank)^*\stackrel{\thicksim}{\longrightarrow} P^\mathcal{C}_J(\{X_j\},\blank)^* \]
\label{commut}
\end{lem}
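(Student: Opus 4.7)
The natural approach is to produce the isomorphism directly by composing with identities along the bijection $\pi$. Since composition in the pseudo-tensor structure is defined for arbitrary surjections of finite sets (and a bijection is such a surjection), we have at our disposal, for $\pi : J \simeq I$ with fibers $J_i = \{\pi^{-1}(i)\}$, the composition map
\[ P^\mathcal{C}_I(\{Y_i\},Z)^m \otimes \bigotimes_{i\in I} P^\mathcal{C}_{\{\pi^{-1}(i)\}}(\{Y_i\},Y_i)^{n_i} \longrightarrow P^\mathcal{C}_J(\{X_j\},Z)^{m+\sum n_i}, \]
where we have used $X_{\pi^{-1}(i)} = Y_i$. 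The plan is to plug in $\mathrm{id}_{Y_i} \in P^\mathcal{C}_{\{\pi^{-1}(i)\}}(\{Y_i\},Y_i)^0$ for each $i\in I$ to define
\[ \Phi_\pi : P^\mathcal{C}_I(\{Y_i\},Z)^* \longrightarrow P^\mathcal{C}_J(\{X_j\},Z)^*, \qquad f \longmapsto f(\mathrm{id}_{Y_i})_{i\in I}. \]

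First I would check that $\Phi_\pi$ is a map of complexes. This follows immediately from the Leibniz-type rule (property (1) in Definition \ref{pstensor}): since each $\mathrm{id}_{Y_i}$ has $\partial \mathrm{id}_{Y_i} = 0$ and degree $0$, only the $(\partial f)(\mathrm{id}_{Y_i})$ term survives, giving $\partial\Phi_\pi(f) = \Phi_\pi(\partial f)$.

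Next I would produce the inverse. The bijection $\pi^{-1} : I \to J$ is itself a surjection of finite sets with singleton fibers, and $Y_i = X_{\pi^{-1}(i)}$, so the same recipe yields
\[ \Phi_{\pi^{-1}} : P^\mathcal{C}_J(\{X_j\},Z)^* \longrightarrow P^\mathcal{C}_I(\{Y_i\},Z)^*, \qquad g \longmapsto g(\mathrm{id}_{X_j})_{j\in J}. \]
Then $\Phi_{\pi^{-1}} \circ \Phi_\pi$ sends $f$ to $f(\mathrm{id}_{Y_i})(\mathrm{id}_{X_j})$, and by associativity (property (2)), this equals $f(\mathrm{id}_{Y_i}(\mathrm{id}_{X_{\pi^{-1}(i)}}))$; since $\mathrm{id}_{Y_i}(\mathrm{id}_{Y_i}) = \mathrm{id}_{Y_i}$ by the unit property (property (3)), and then $f(\mathrm{id}_{Y_i}) = f$, again by property (3), we get the identity. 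The reverse composite is identical by symmetry.

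I do not anticipate a serious obstacle: the only potential subtlety is the sign in the associativity relation, but all the degrees $n_i$ and $p_j$ involved are zero (since we are composing only with identities), so every sign of the form $(-1)^{(\sum p_j)n_i}$ is $+1$. The whole argument is therefore a formal manipulation using only the three defining properties of the pseudo-tensor structure together with the existence of the identity morphisms $\mathrm{id}_{Y_i}$.
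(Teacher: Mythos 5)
Your proposal is correct and follows essentially the same route as the paper: composing with the identity elements $\mathrm{id}_{Y_i}$ (resp.\ $\mathrm{id}_{X_j}$) along $\pi$ and $\pi^{-1}$, then using associativity and the unit axiom to show the composites are identities. The paper's proof is phrased in terms of $\sigma(f)=f(g_i)$ and $\tau(f')=f'(h_j)$ with $g_i=\mathrm{id}_{Y_i}$ and $h_j=\mathrm{id}_{X_j}$, which is exactly your $\Phi_\pi$ and $\Phi_{\pi^{-1}}$; your extra remark that the maps are chain maps and that the associativity sign is trivially $+1$ is a small but welcome clarification.
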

\begin{proof}
Since $\pi$ is a bijection, for any $j\in J$, $j=\pi^{-1}(i)$ for $i=\pi(j)\in I$. Then, $X_j=X_{\pi^{-1}(i)}=Y_i=Y_{\pi(j)}$. Thus we have $g_i=id_{Y_i}\in 
P_1^\mathcal{C}(\{X_{\pi^{-1}(i)}\},Y_i)^0$ and $h_j=id_{X_j}\in P^\mathcal{C}_1(\{Y_{\pi(j)},X_j)^0$. This gives us maps 
\[\sigma:P^\mathcal{C}_I(\{Y_i\},Z)^*\longrightarrow P^\mathcal{C}_J(\{X_j\},Z)^* \quad \tau:P^\mathcal{C}_J(\{X_j\},Z)^*\longrightarrow P^\mathcal{C}_I(\{Y_i\},Z)^* \]
given by $\sigma(f)=f(g_i)$ and $\tau(f')=f'(h_j)$. Then $\tau\circ\sigma(f)=(f(g_i))(h_j)=f(g_i(h_j))=f(g_i\circ h_{\pi^{-1}(i)})=f(id_{Y_i})=f$ and similarly, $\sigma\circ\tau=id$.
Hence, $\sigma$ is an isomorphism with inverse $\tau$.
\end{proof}

If we have a pseudo-tensor structure on a DG category $\mathcal{C}$, then each  collection of objects $\{X_i\}_{i\in I}$ in $\mathcal{C}$ gives rise to the $\mathcal{C}$-module
\[
P^\mathcal{C}_I(\{X_i\},\blank)^*:\mathcal{C}\to C(\mathbf{Ab}),
\]
which we may consider as an object of $D(\mathcal{C}^{op})$. Similarly, each object $X$ of $\mathcal{C}$ gives us the object 
$Hom_\mathcal{C}(X,\blank)^*$ of $D(\mathcal{C}^{op})$, lying in the full subcategory $K^b(\mathcal{C}^{op})$.

\begin{dfn} A pseudo-tensor structure is called {\em representable}, if for any collection of objects $\{X_i\}_{i\in I}$ in $\mathcal{C}$, there exists an object 
$\otimes_{i\in I}X_i$ in $\mathcal{C}$ 
%
and
an isomorphism  in $D(\mathcal{C}^{op})$
\[
 \lambda_{\{X_i\}_{i\in I}}\,:\,P^\mathcal{C}_I(\{X_i\},\blank)^* \stackrel{\sim}{\longrightarrow} Hom_\mathcal{C}(\otimes_{i\in I}X_i,\blank)^*
\]
such that  for $|I|=1$, the object corresponding to $\{X_1\}$ is $X_1$ and $\lambda_{\{X_1\}}$ is the identity.
\end{dfn}
\begin{rem}
1. For an  additive category $\mathcal{C}$,  a pseudo-tensor structure is representable if and only if for any collection of objects $\{X_i\}_{i\in I}$ in $\mathcal{C}$, there exists an object 
$\otimes_{i\in I}X_i$ in $\mathcal{C}$ and an isomorphism of functors from $\mathcal{C}$ to $\mathbf{Ab}$
\[ \lambda_{\{X_i\}_{i\in I}}\,:\, P^\mathcal{C}_I(\{X_i\},\blank)\to Hom_\mathcal{C}(\otimes_{i\in I}X_i,\blank).
\]
2. Suppose a given pseudo-tensor structure on $\mc{C}$ is representable, via objects $X_1\otimes\ldots\otimes X_n$ and isomorphisms $ \lambda_{\{X_i\}_{i\in I}}$ for each collection $X_1,\ldots, X_n$ of objects of $\mc{C}$. Suppose we have another choice of objects  $(X_1\otimes\ldots\otimes X_n)'$ and isomorphisms $ \lambda'_{\{X_i\}_{i\in I}}$. Then we have the isomorphism in $D(\mc{C}^{op})$
\[
 Hom_\mathcal{C}((\otimes_{i\in I}X_i)',\blank)^*\cong Hom_\mathcal{C}(\otimes_{i\in I}X_i,\blank)^*;
 \]
 in particular, the respective identity maps give us  the morphisms in $H^0\mc{C}$
 \begin{align*}
 &f:X_1\otimes\ldots\otimes X_n\to (X_1\otimes\ldots\otimes X_n)'
 &g:(X_1\otimes\ldots\otimes X_n)'\to X_1\otimes\ldots\otimes X_n
 \end{align*}
 which are inverse isomorphisms in $H^0\mc{C}$, and we have 
 \[
 \lambda'_{\{X_i\}_{i\in I}}=\lambda_{\{X_i\}_{i\in I}}\circ g.
 \]
 Thus, the data $(X_1\otimes\ldots\otimes X_n,\lambda_{\{X_i\}_{i\in I}})$ is determined up to canonical isomorphism in $H^0\mc{C}$.
 \end{rem}

If the $P^\mathcal{C}_I$ are representable, the composition map gives for a surjection $\pi:J\twoheadrightarrow I$,
\[\xymatrixcolsep{25pt}{\small 
\xymatrix{{\ds Hom_\mathcal{C}({\bigotimes_{i\in I}(\bigotimes_{j\in J_i}X_j),\bigotimes_{i\in I}(\bigotimes_{j\in J_i}X_j)})^m\otimes\prod_{i\in I}Hom_\mathcal{C}(\bigotimes_{j\in J_i}X_j,\bigotimes_{j\in J_i}X_j)^{n_i}} \ar[d]^{\wr}  \\
{\ds P^\mathcal{C}_I(\{\bigotimes_{j\in J_i}X_j\},\bigotimes_{i\in I}(\bigotimes_{j\in J_i}X_j))^m\otimes\prod_{i\in I}P^\mathcal{C}_{J_i}(\{X_j\}_{j\in J_i},\bigotimes_{j\in J_i}X_j)^{n_i}} \ar[d] \\
{\ds P^\mathcal{C}_J(\{X_j\},\bigotimes_{i\in I}(\bigotimes_{j\in J_i}X_j))^{m+\sum n_i}} \ar[d]^{\wr} \\
{\ds Hom_\mathcal{C}(\bigotimes_{j\in J}X_j,\bigotimes_{i\in I}(\bigotimes_{j\in J_i}X_j))^{m+\sum n_i}} }}
\]
The image of $(id_{\otimes_{i\in I}(\otimes_{j\in J_i}X_j)},(id_{\otimes_{j\in J_i}X_j})_i)$ under this map is denoted
\[ \epsilon_\pi:\otimes_{j\in J}X_j\longrightarrow \otimes_{i\in I}(\otimes_{j\in J_i}X_j) \]
Note that $\epsilon_\pi\in Z^0Hom_\mathcal{C}(\otimes_{j\in J}X_j,\otimes_{i\in I}(\otimes_{j\in J_i}X_j))$.

If $f:X\ra Y$ and $f':X'\ra Y'$ are morphisms in $\mathcal{C}$, we define a morphism $f\otimes f'$ as follows. Consider the map
\[\begin{xy}\xymatrix{
Hom(Y\otimes Y',\blank)\otimes Hom(X,Y)\otimes Hom(X',Y') \ar[d]^{\wr} \\
P_2(\{Y,Y'\},\blank)\otimes P_1(\{X\},Y)\otimes P_1(\{X'\},Y') \ar[d] \\
P_2(\{X,X'\},\blank) \ar[d]^{\wr} \\
Hom(X\otimes X',\blank) }
\end{xy} \]
The image of $(id_{Y\otimes Y'},f,f')$ under this map is denoted $f\otimes f'\in Hom(X\otimes X',Y\otimes Y')$.
\begin{lem}\label{pipi}
If $K\stackrel{\pi'}{\twoheadrightarrow}J\stackrel{\pi}{\twoheadrightarrow}I$ are surjective maps and the pseudo-tensor structure is representable, then the following diagram commutes.
\[\begin{xy}\xymatrix{\otimes_KX_k\ar[r]^-{\epsilon_{\pi'}}\ar[dr]_{\epsilon_{\pi\pi'}}& \otimes_J(\otimes_{K_j}X_k) \ar[r]^-{\epsilon_\pi} &
\otimes_I(\otimes_{J_i}(\otimes_{K_j}X_k)) \\ & \otimes_I(\otimes_{K_i}X_k) \ar[ur]_{\otimes_{i\in I}\epsilon_{\pi'_i}} & } \end{xy} \]
where $J_i=\pi^{-1}(i),K_j=\pi'^{-1}(j),K_i=(\pi\pi')^{-1}(i)$ and $\pi'_i:=\pi'|_{K_i}:K_i\twoheadrightarrow J_i$.
\end{lem}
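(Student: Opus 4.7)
The plan is to transport both compositions across the representability isomorphism
\[
\lambda\colon P^\mathcal{C}_K\bigl(\{X_k\}_K,\ \otimes_I(\otimes_{J_i}(\otimes_{K_j} X_k))\bigr)^0 \stackrel{\sim}{\longrightarrow} \Hom_\mathcal{C}\bigl(\otimes_K X_k,\ \otimes_I(\otimes_{J_i}(\otimes_{K_j} X_k))\bigr),
\]
and to check that the two resulting elements coincide by applying axiom~(2) of Definition~\ref{pstensor} (associativity of pseudo-tensor composition) to the factorization $K \twoheadrightarrow J \twoheadrightarrow I$.

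For each finite set $S$ and $S$-family $\{Y_s\}$, write $\mu^{\{Y_s\}}_S := \lambda^{-1}(\id_{\otimes_S Y_s}) \in P^\mathcal{C}_S(\{Y_s\}, \otimes_S Y_s)^0$ for the ``universal'' $S$-ary multiplication. Unwinding the construction of $\epsilon_\sigma$ in the text gives, for any surjection $\sigma\colon T \twoheadrightarrow S$,
\[
\lambda^{-1}(\epsilon_\sigma) \;=\; \mu_S^{\{\otimes_{T_s} Y_t\}}\bigl((\mu^{\{Y_t\}_{T_s}}_{T_s})_{s \in S}\bigr),
\]
and a parallel unwinding of the tensor product of a family of morphisms gives
\[
\lambda^{-1}\bigl(\otimes_{i \in I} \epsilon_{\pi'_i}\bigr) \;=\; \mu^{\{\otimes_{J_i}(\otimes_{K_j}X_k)\}}_I\bigl((\epsilon_{\pi'_i})_{i \in I}\bigr).
\]
Because $\lambda_{\{X\}}$ is the identity on $P^\mathcal{C}_1 = \Hom$, naturality of $\lambda$ in its target (as an isomorphism in $D(\mathcal{C}^{op})$) implies the relation $\lambda^{-1}(\alpha \circ \beta) = \alpha(\lambda^{-1}(\beta))$ for any morphism $\alpha\colon W \to W'$, where the right-hand side uses the pseudo-tensor composition with $\alpha \in P^\mathcal{C}_1(W, W')$.

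Combining these translations, the top path rewrites, under $\lambda^{-1}$, as the iterated pseudo-tensor composition
\[
\mu^{\{\otimes_{J_i}(\otimes_{K_j} X_k)\}}_I\Bigl((\mu^{\{\otimes_{K_j} X_k\}_{J_i}}_{J_i})_{i \in I}\Bigr)\Bigl((\mu^{\{X_k\}_{K_j}}_{K_j})_{j \in J}\Bigr),
\]
while the bottom path rewrites as $\mu_I\bigl((\epsilon_{\pi'_i})_{i \in I}\bigr)\bigl((\mu^{\{X_k\}_{K_i}}_{K_i})_{i \in I}\bigr)$. Associativity for $K \twoheadrightarrow J \twoheadrightarrow I$ transforms the first expression into
\[
\mu^{\{\otimes_{J_i}(\otimes_{K_j}X_k)\}}_I\Bigl(\bigl(\mu^{\{\otimes_{K_j}X_k\}_{J_i}}_{J_i}\bigl((\mu^{\{X_k\}_{K_j}}_{K_j})_{j \in J_i}\bigr)\bigr)_{i \in I}\Bigr),
\]
and associativity for $K \twoheadrightarrow I$ applied to the second, combined with the identity $\epsilon_{\pi'_i}(\mu^{\{X_k\}_{K_i}}_{K_i}) = \lambda^{-1}(\epsilon_{\pi'_i})$, transforms it into the same expression. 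All elements involved have degree $0$, so the sign prefactor in axiom~(2) is trivial. Injectivity of $\lambda$ now yields the desired equality of morphisms in $\mathcal{C}$.

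The main technical obstacle is really just the bookkeeping; in particular, one must spell out unambiguously the $|I|$-ary tensor product of morphisms, extending the binary construction $f \otimes f'$ given in the text (say, inductively on $|I|$ using two-block partitions and Lemma~\ref{commut}), and verify the displayed formula for $\lambda^{-1}(\otimes_i \epsilon_{\pi'_i})$. Once those translations are recorded, the lemma is a purely formal consequence of the associativity axiom.
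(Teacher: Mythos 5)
Your proposal is correct and follows essentially the same route as the paper's proof: both unwind $\epsilon_\sigma$ and $\otimes_i\epsilon_{\pi'_i}$ into the universal elements $\lambda^{-1}(\id)$, apply the associativity axiom for the chains $K\twoheadrightarrow J\twoheadrightarrow I$ and $K\twoheadrightarrow I$, and invoke naturality of $\lambda$ to pass between pre- and post-composition. The only cosmetic difference is that the paper verifies commutativity at the level of the contravariant functors $P_\bullet(\{\cdot\},\blank)$ (testing against an arbitrary $f$) and then lets representability transfer the statement back to morphisms, while you work directly with the representing elements; the bookkeeping caveats you flag (spelling out the $|I|$-ary tensor of morphisms, checking the formula for $\lambda^{-1}(\otimes_i\epsilon_{\pi'_i})$) are real but also left implicit in the paper.
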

\begin{proof}
Commutativity of the above diagram is the same as the commutativity of the following diagram
\[ \begin{xy}
\xymatrix{ P_J(\{\otimes_{k\in K_j}X_k\},\blank)\ar[r]^-{\epsilon_{\pi'}^*} & P_K(\{X_k\},\blank) \\
P_I(\{\otimes_{j\in J_i}(\otimes_{k\in K_j}X_k)\},\blank)\ar[r]^-{(\otimes_I\epsilon_{\pi'_i})^*} \ar[u]^{\epsilon_{\pi}^*} & P_I(\{\otimes_{k\in K_i}X_k\},\blank)
\ar[u]_{\epsilon_{\pi\pi'}^*} } \end{xy}\]
Let $f\in P_I(\{\otimes_{j\in J_i}(\otimes_{k\in K_j}X_k)\},\blank)$, then 
\begin{eqnarray*} \epsilon_{\pi'}^*(\epsilon_{\pi}^*(f))
&=& (f\circ((\lambda^{-1}_{\{\otimes_{K_j}X_k\}_{j\in J_i}}(id_{\otimes_{J_i}(\otimes_{K_j}X_k)}))_{i\in I}))\circ((\lambda^{-1}_{\{X_k\}_{k\in K_j}}(id_{\otimes_{K_j}X_k}))_{j\in J})\\
&=& f\circ((\lambda^{-1}_{\{\otimes_{K_j}X_k\}_{j\in J_i}}(id_{\otimes_{J_i}(\otimes_{K_j}X_k)}))\circ((\lambda^{-1}_{\{X_k\}_{k\in K_j}}(id_{\otimes_{K_j}X_k}))_{j\in {J_i}}))_{i\in I}\\
&=& f\circ(\lambda^{-1}_{\{X_k\}_{k\in K_i}}(\epsilon_{\pi'_i}))_{i\in I} \\
&& [\text{By definition of }\epsilon_{\pi'_i}] \\
&=& f\circ(\epsilon_{\pi'_i}\circ(\lambda^{-1}_{\{X_k\}_{k\in K_i}}(id_{\otimes_{K_i}X_k})))_{i\in I} \\
&& [\text{Since } \lambda_{\{X_k\}_{k\in K_i}}(\epsilon_{\pi'_i}\circ(\lambda^{-1}_{\{X_k\}_{k\in K_i}}(id_{\otimes_{K_i}X_k})))=\epsilon_{\pi'_i}\circ(id_{\otimes_{K_i}X_k})=\epsilon_{\pi'_i} \\
&& \text{by naturality and }\lambda\text{ is an isomorphism.}] \\
&=& (f\circ(\epsilon_{\pi'_i})_{i\in I})\circ(\lambda^{-1}_{\{X_k\}_{k\in K_i}}(id_{\otimes_{K_i}X_k}))_{i\in I}\\
&=& \epsilon_{\pi\pi'}^*((\otimes_I\epsilon_{\pi'_i})^*(f))\vspace*{-1cm}
\end{eqnarray*}
\end{proof}

\begin{lem} \label{epi}Suppose that the  pseudo-tensor structure $P^\mathcal{C}_*$ is representable. Then  $\epsilon_\pi$ is an isomorphism for all surjections $\pi$ if and only if the pseudo-tensor structure $P^\mathcal{C}_I$ gives
a tensor structure on $\mathcal{C}$.
\end{lem}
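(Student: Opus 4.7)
The plan is to use representability to identify each $P^{\mathcal{C}}_I(\{X_i\},\blank)^*$ with $Hom_{\mathcal{C}}(\otimes_{i\in I} X_i,\blank)^*$, and then to show that invertibility of every $\epsilon_\pi$ is precisely what is needed to assemble the representing objects $\otimes_{i\in I} X_i$ into the associators of a tensor structure on $\mathcal{C}$, with the symmetry being supplied by Lemma~\ref{commut}.

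For the forward direction, I would assume every $\epsilon_\pi$ is an isomorphism and define the bifunctor $\otimes:\mathcal{C}\times\mathcal{C}\to\mathcal{C}$ as the representing object of $P^{\mathcal{C}}_2(\{X,Y\},\blank)^*$, using the already established functoriality of $P^{\mathcal{C}}_I$ in each entry. For the associator on $X_1,X_2,X_3$, I would take the two surjections $\pi_L,\pi_R:\{1,2,3\}\twoheadrightarrow\{1,2\}$ grouping respectively the first two and the last two indices, and set $\alpha_{X_1,X_2,X_3}:=\epsilon_{\pi_R}\circ\epsilon_{\pi_L}^{-1}:(X_1\otimes X_2)\otimes X_3\xrightarrow{\sim}X_1\otimes(X_2\otimes X_3)$; naturality is immediate from the functoriality of $P^{\mathcal{C}}_I$. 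The pentagon axiom for a four-fold tensor is then obtained by applying Lemma~\ref{pipi} to the various chains $\{1,2,3,4\}\twoheadrightarrow\{1,2,3\}\twoheadrightarrow\{1,2\}$ (and their variants) that interpolate between the two sides of the pentagon, and checking that the five resulting comparison arrows commute. The symmetry $\tau_{X,Y}:X\otimes Y\xrightarrow{\sim}Y\otimes X$ is supplied by Lemma~\ref{commut} applied to the transposition on $\{1,2\}$, and the hexagon axioms follow analogously by combining Lemma~\ref{commut} with Lemma~\ref{pipi} on the appropriate chains of surjections interleaved with bijections.

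For the backward direction, if the pseudo-tensor structure already defines a tensor structure on $\mathcal{C}$, then $\otimes_{i\in I}X_i$ is canonically isomorphic to the iterated tensor product $X_1\otimes\cdots\otimes X_n$ formed from the tensor structure, and the representability isomorphisms identify $P^{\mathcal{C}}_I(\{X_i\},Y)^*$ with $Hom_{\mathcal{C}}(X_1\otimes\cdots\otimes X_n,Y)^*$ compatibly with composition (which corresponds to tensoring morphisms). Under this identification, $\epsilon_\pi:\otimes_{j\in J}X_j\to\otimes_{i\in I}(\otimes_{j\in J_i}X_j)$ becomes the canonical coherence isomorphism given by the iterated associators of the tensor structure, hence it is an isomorphism for every surjection $\pi$.

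The hard part will be the forward direction, specifically the verification of the pentagon (and hexagon) axioms: each axiom reduces, via Lemma~\ref{pipi} together with Lemma~\ref{commut}, to an equality of composites of $\epsilon$-maps indexed by chains of surjections and bijections, which must then be checked by tracking how subsets regroup along each such chain. The unit is not forced by the axioms as stated (since $I$ is required non-empty), but can be recovered either from the identification $P^{\mathcal{C}}_1=Hom_{\mathcal{C}}$ with $\id_X$ as unit, or, in the motivic application to $dgCor_S$, by taking $S$ itself as the unit object.
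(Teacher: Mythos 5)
Your proposal follows essentially the same route as the paper: use representability to pass between $P^{\mathcal C}_I$ and $Hom_{\mathcal C}(\otimes X_i,\blank)$, define the associator as a composite $\epsilon_{1,23}\circ\epsilon_{12,3}^{-1}$, take the symmetry from Lemma~\ref{commut}, and reduce the pentagon and hexagon axioms to commuting triangles of $\epsilon$-maps via Lemma~\ref{pipi}. The paper spells out one additional step you gloss over — the explicit verification that $\otimes$ is a bifunctor, i.e.\ $(gf)\otimes(g'f')=(g\otimes g')\circ(f\otimes f')$, derived from naturality of the representing isomorphisms — but otherwise the approaches coincide.
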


\begin{proof}
Clearly if the pseudo-tensor structure  gives a tensor structure, then the maps  $\epsilon_\pi$ are all isomorphisms.

Conversely, if we have morphisms $X\stackrel{f}{\ra}Y\stackrel{g}{\ra}Z$ and $X'\stackrel{f'}{\ra}Y'\stackrel{g'}{\ra}Z'$, then we have that 
\begin{eqnarray} \label{fg} gf\otimes g'f'& = & \lambda_{\{X,X'\}}(\lambda_{\{Z,Z'\}}^{-1}(id_{Z\otimes Z'})(gf,g'f'))\nonumber \\
& = &
\lambda_{\{X,X'\}}((\lambda_{\{Z,Z'\}}^{-1}(id_{Z\otimes Z'})(g,g'))(f,f'))
\nonumber \\
&= & \lambda_{\{X,X'\}}((g\otimes g')_*\lambda_{\{Y,Y'\}}^{-1}(id_{Y\otimes Y'})(f,f'))\nonumber \\
&= & (g\otimes g')_*\lambda_{\{X,X'\}}(\lambda_{\{Y,Y'\}}^{-1}(id_{Y\otimes Y'})(f,f'))\nonumber \\
&= &(g \otimes g')\circ (f\otimes f'). \end{eqnarray}
since, by naturality, 
$\lambda_{\{Y,Y'\}}((g\otimes g')_*\lambda_{\{Y,Y'\}}^{-1}(id_{Y\otimes Y'}))=(g\otimes g')_*\lambda_{\{Y,Y'\}}(\lambda_{\{Y,Y'\}}^{-1}(id_{Y\otimes Y'}))=g\otimes g'=\lambda_{\{Y,Y'\}}(\lambda_{\{Z,Z'\}}^{-1}(id_{Z\otimes Z'})(g,g'))$ and $\lambda_{\{Y,Y'\}}$ is an isomorphism.
The associativity isomorphisms are given by the diagrams \[\begin{xy}\xymatrix{ X\otimes(Y\otimes Z)\ar[rr]^{\alpha}& & (X\otimes Y)\otimes Z \\
& X\otimes Y\otimes Z \ar[ul]_{\sim}^{\epsilon_{1,23}} \ar[ur]^{\sim}_{\epsilon_{12,3}} & } \end{xy}\]
We need to check that the following pentagonal diagram commutes:
\[\begin{xy} \xymatrix{ & (X\otimes Y)\otimes(Z\otimes W) \ar[dr]^{\alpha} & \\
X\otimes(Y\otimes(Z\otimes W)) \ar[ur]^{\alpha} \ar[dd]_{id_X\otimes\alpha} & & ((X\otimes Y)\otimes Z)\otimes W \\
& X\otimes Y\otimes Z\otimes W \ar[uu]^{\epsilon_{12,34}} \ar[ul]^{id_X\otimes\epsilon_{2,34}}\ar[ur]^{\epsilon_{12,3}\otimes id_W} 
\ar[dl]^{id_X\otimes\epsilon_{23,4}} \ar[dr]^{\epsilon_{1,23}\otimes id_W} & \\
X\otimes((Y\otimes Z)\otimes W) \ar[rr]^{\alpha} & & (X\otimes (Y\otimes Z))\otimes W \ar[uu]^{\alpha\otimes id_W} } \end{xy}\]
The upper left triangle commutes since $id_X\otimes\epsilon_{2,34}=\epsilon_{1,23}\circ\epsilon_{1,2,34}$ and 
$\epsilon_{12,34}=\epsilon_{12,3}\circ\epsilon_{1,2,34}$ by Lemma \ref{pipi}. Similarly, the upper right triangle commutes too. 
Again, using Lemma \ref{pipi}, the lower triangle commutes since $id_X\otimes\epsilon_{23,4}=\epsilon_{1,23}\circ\epsilon_{1,23,4}$ and 
$\epsilon_{1,23}\otimes id_W=\epsilon_{12,3}\circ\epsilon_{1,23,4}$. The other two triangles commute by definition of $\alpha$ and \eqref{fg}.
The pentagon commutes since all the inner triangles commute.

By Lemma \ref{commut}, we have the isomorphism $P_{2}(\{X_2,X_1\},\blank)\stackrel{\sim}{\ra}P_{2}(\{X_1,X_2\},\blank)$ which along with representability, 
gives the commutativity constraint 
$\tau_{X_1,X_2}:\,X_1\otimes X_2\stackrel{\sim}{\ra}X_2\otimes X_1$. We also need to verify the commutativity of the following diagram.
\[\begin{xy}\xymatrix{
X\otimes(Y\otimes Z)\ar[rr]^{\alpha}\ar[ddd]^{id_X\otimes\tau_{Y,Z}} && (X\otimes Y)\otimes Z \ar[rr]^{\tau_{(X\otimes Y),Z}} && Z\otimes(X\otimes Y)\ar[ddd]^\alpha \\
& X\otimes Y\otimes Z \ar[ul]_\sim \ar[ur]^\sim \ar[dr]^\sim \ar[rr]^\sim && Z\otimes X\otimes Y \ar[ur]_\sim \ar[ddr]^\sim & \\
&& X\otimes Z\otimes Y \ar[ur]^\sim \ar[d]^\wr \ar[dll]_\sim && \\
X\otimes(Z\otimes Y) \ar[rr]_\alpha && (X\otimes Z)\otimes Y \ar[rr]_{\tau_{X,Z}\otimes id_Y} && (Z\otimes X)\otimes Y }
\end{xy}\]

The three inner triangles commute by definition of $\alpha$. 
But all the inner quadrilaterals commute by Lemma \ref{pipi}, implying the commutativity of the outer hexagon. Hence, $\otimes$ gives a tensor
structure on $\mathcal{C}$.
\end{proof}

\phantomsection
\section{Pseudo-tensor structure on DG complexes}\label{sec:PSDGCat}
The goal in this section is to show that   a pseudo-tensor structure on a DG category $\mathcal{C}$ induces a canonical pseudo-tensor structure on the category Pre-Tr$(\mathcal{C})$ (see Definition \ref{Pretr}).
In section \ref{sec:Pseudo-tensorPT}, we show that, if the original pseudo-tensor structure is representable
and defines a tensor structure on $H^0\mathcal{C}$, then  the pseudo-tensor structure on Pre-Tr$(\mathcal{C})$ is representable and induces a tensor structure on the  category $K^b(\mathcal{C})$.

We begin by defining the induced pseudo-tensor structure on Pre-Tr$(\mathcal{C})$.

Let $\mathcal{E}^i=\{E^i_j,e^i_{jk}:E^i_k\ra E^i_j\}_{N_i\leq j,k\leq M_i}$, $i\in I:=\{1,\ldots,m\}$ and $\mathcal{F}=\{F_k,f_{jk}:F_k\ra F_j\}_{N'\leq j,k\leq M'}$ be 
objects in Pre-Tr$(\mathcal{C})$. We define
\begin{equation}\label{eqn:PSDef}
P_m^{\pt}(\{\mathcal{E}^i\}_{i\in I},\mathcal{F})^n= \bigoplus_{\begin{array}{c} N_i\leq j_i\leq M_i\\
N'\leq k\leq M'\end{array}} P_m^\mathcal{C}(\{E^i_{j_i}\}_{i\in I},F_k)^{n-k+\sum_{i\in I}j_i} 
\end{equation}
Let $\phi=(\phi_{k,\{j_i\}})\in P^{\pt}_m(\{\mathcal{E}^i\}_{i\in I},\mathcal{F})^n$. Then we define 
\begin{align}\label{eqn:PSDifDef}
(\del\phi)_{k,\{j_i\}}=&(-1)^kd\phi_{k,\{j_i\}}+\sum_lf_{kl}(\phi_{l,\{j_i\}})\\
& -(-1)^n\sum_{i=1}^m\sum_{N_i\leq l\leq M_i}(-1)^{s(i,{j_i},l)}\phi_{k,J(i,{j_i},l)}
(id_{E^1_{j_1}},\ldots,id_{E^{i-1}_{j_{i-1}}},e^i_{lj_i},id_{E^{i+1}_{j_{i+1}}},\ldots,id_{E^m_{j_m}})\notag
\end{align}
where $s(i,{j_i},l):=j_1+\cdots+j_{i-1}+(j_i-l+1)(j_{i+1}+\cdots+j_m)$ and $J(i,{j_i},l):=$\\ 
$\{j_1,\ldots,j_{i-1},l,j_{i+1},\ldots,j_m\}$.
We check that $\del^2=0$. For $\phi$ as above, 
\begin{eqnarray}
(\del^2\phi)_{k,\{j_i\}}&=&(-1)^kd(\del\phi)_{k,\{j_i\}}\label{ddel}\\
&& +\sum_lf_{kl}\left( (\del\phi)_{l,\{j_i\}}\right)\label{fdel}\\
&& -(-1)^{n+1}\sum_{i=1}^m\sum_{N_i\leq l\leq M_i}(-1)^{s(i,{j_i},l)}\nonumber \\ 
&& \hspace{-11mm}(\del\phi)_{k,J(i,{j_i},l)}
(id_{E^1_{j_1}},\ldots,id_{E^{i-1}_{j_{i-1}}},e^i_{lj_i},id_{E^{i+1}_{j_{i+1}}},\ldots,id_{E^m_{j_m}})\label{dele}
\end{eqnarray}

We have,\begin{eqnarray*} \eqref{ddel} &=& (-1)^k\left(\sum_l\sum_q(-1)^{k+1}f_{kq}f_{ql}(\phi_{l,\{j_i\}})+\sum_l(-1)^{l-k+1}f_{kl}(d\phi_{l,\{j_i\}})\right. \\
&& \hspace{-7mm}-(-1)^n\sum_{i=1}^m\sum_{N_i\leq l\leq M_i}(-1)^{s(i,{j_i},l)}\left(d\phi_{k,J(i,{j_i},l)}
(id_{E^1_{j_1}},\ldots,id_{E^{i-1}_{j_{i-1}}},e^i_{lj_i},id_{E^{i+1}_{j_{i+1}}},\ldots,id_{E^m_{j_m}})\right. \\
&& \hspace{7mm}\left.+(-1)^{l-j_i+\sum j_i-k+1}\phi_{k,J(i,{j_i},l)}(id_{E^1_{j_1}},\ldots,\sum_p(-1)^{l+1}e^i_{lp}e^i_{pj_i},\ldots,id_{E^m_{j_m}})\big)\right)
\end{eqnarray*}
\begin{eqnarray*}
\eqref{fdel} &=& \sum_l(-1)^lf_{kl}(d\phi_{l,\{j_i\}}) +\sum_l\sum_mf_{kl}f_{lm}(\phi_{m,\{j_i\}}) \\
&& \hspace{-7mm}-(-1)^n\sum_q\sum_{i}\sum_{l}(-1)^{s(i,{j_i},l)}f_{kq}(\phi_{q,J(i,{j_i},l)}
(id_{E^1_{j_1}},\ldots,id_{E^{i-1}_{j_{i-1}}},e^i_{lj_i},\ldots,id_{E^m_{j_m}}))
\end{eqnarray*}
And,
\begin{multline*}
\eqref{dele} = -(-1)^{n+1}\sum_{i=1}^m\sum_{l}(-1)^{s(i,{j_i},l)}d\phi_{k,J(i,{j_i},l)}
(id_{E^1_{j_1}},\ldots,id_{E^{i-1}_{j_{i-1}}},e^i_{lj_i},id_{E^{i+1}_{j_{i+1}}},\ldots,id_{E^m_{j_m}}) \\
-(-1)^{n+1}\sum_q\sum_{i}\sum_{l}(-1)^{s(i,{j_i},l)}f_{kq}(\phi_{q,J(i,{j_i},l)}
(id_{E^1_{j_1}},\ldots,id_{E^{i-1}_{j_{i-1}}},e^i_{lj_i},\ldots,id_{E^m_{j_m}})) \\
-(-1)^{n+1+n+1}\bigg(\sum_{i=1}^m\sum_{N_i\leq l,p\leq M_i}(-1)^{s(i,\{j_i\},l)+s(i,J(i,\{j_i\},l),p)}\phi_{k,J(i,\{j_i\},p)}(id_{E^1_{j_1}},\ldots,e^i_{pl}e^i_{lj_i},\ldots,id_{E^m_{j_m}})\\ 
\hspace{-19mm}
\sum_{\begin{array}{c}\scriptstyle i,q=1\\\scriptstyle i\neq q\end{array}}^{m}\sum_{l=N_i}^{M_i}\sum_{p=N_q}^{M_q}(-1)^{s(i,\{j_i\},l)+s(q,J(i,\{j_i\},l),p)}(\phi_{k,J(q,J(i,\{j_i\},l),p)}(id_{E^1_{j_1}},\ldots,e^q_{pj_q},\ldots,id_{E^m_{j_m}}))(id_{E^1_{j_1}},\ldots,e^i_{lj_i},\ldots,id_{E^m_{j_m}})\bigg)
\end{multline*}
We have $s(i,\{j_i\},l)+s(i,J(i,\{j_i\},l),p)= j_1+\cdots+j_{i-1}+(j_i-l+1)(j_{i+1}+\cdots+j_m)+j_1+\cdots+j_{i-1}+(l-p+1)(j_{i+1}+\cdots+j_m)=(j_i-l_i)(j_{i+1}+\cdots+j_m)
+ 2(-j_i+\sum j_i)=s(i,\{j_i\},p)-j_i+\sum j_i$. Now, the last term in the expansions of \eqref{ddel} is:
\[(-1)^{k+n+1}\sum_{i=1}^m\sum_{l}(-1)^{s(i,{j_i},l)}((-1)^{l-j_i+\sum j_i-k+n}\phi_{k,J(i,{j_i},l)}(id_{E^1_{j_1}},\ldots,\sum_p(-1)^{l+1}e^i_{lp}e^i_{pj_i},\ldots,id_{E^m_{j_m}})) \]
Since the sign of the term $\phi_{k,J(i,\{j_i\},p)}(id_{E^1_{j_1}},\ldots,e^i_{pl}e^i_{lj_i},\ldots,id_{E^m_{j_m}})$ is $(-1)^{s(i,\{j_i\},p)-j_i+\sum j_i}$,
we can see that the last term in the expansion of \eqref{ddel} and the penultimate term of \eqref{dele} cancel each other. For the last term of \eqref{dele}, note that for $i\leq q$, 
\begin{multline*} (\phi_{k,J(q,J(i,\{j_i\},l),p)}(id_{E^1_{j_1}},\ldots,e^q_{pj_q},\ldots,id_{E^m_{j_m}}))(id_{E^1_{j_1}},\ldots,e^i_{lj_i},\ldots,id_{E^m_{j_m}})=\\ (-1)^{(j_i-l+1)(j_q-p+1)}(\phi_{k,J(i,J(q,\{j_i\},p),l)}(id_{E^1_{j_1}},\ldots,e^i_{lj_i},\ldots,id_{E^m_{j_m}}))(id_{E^1_{j_1}},\ldots,e^q_{pj_q},\ldots,id_{E^m_{j_m}}).\end{multline*}
Also, \begin{multline*} s(i,\{j_i\},l)+s(q,J(i,\{j_i\},l),p)-(s(q,\{j_i\},p)+s(i,J(q,\{j_i\},p),l))\\
\equiv (j_i-l+1)(j_q-p+1)+1 \bmod{2} 
\end{multline*}
implying that the last sum $\sum_{1\leq i\neq q\leq m}\sum_{N_i\leq l\leq M_i}\sum_{N_q\leq p\leq M_q}$ in the expansion of \eqref{dele} is 0.
Clearly, all the other terms also
cancel each other, giving $\del^2\phi=0$.

The composition map is defined as follows
\begin{equation}\label{eqn:PSComp}
\xymatrixcolsep{25pt}
{\small \xymatrix{ P^\pt_I(\{\mathcal{E}^i\},\mathcal{F})^m\otimes\prod_{i\in I}P^\pt_{J_i}(\{\mathcal{H}^j\},\mathcal{E}^i)^{n_i} \ar@{=}[d] \\
\bigoplus_{j_i,k,k_j,l_i}P^\mathcal{C}_I(\{E^i_{j_i}\},F_k)^{m-k+\sum j_i}\otimes\prod_{i\in I}(\{H^j_{k_j}\},E^j_{l_i})^{n_i-l_i+\sum_{j\in J_i}k_j} \ar[d]\\
\bigoplus_{j_i,k,k_j}P^\mathcal{C}_I(\{E^i_{j_i}\},F_k)^{m-k+\sum j_i}\otimes\prod_{i\in I}(\{H^j_{k_j}\},E^j_{j_i})^{n_i-j_i+\sum_{j\in J_i}k_j} \ar[d]\\
\bigoplus_{j_i}\bigoplus_{k_j,k}P^\mathcal{C}_J(\{H^j_{k_j}\},F_k)^{m+\sum n_i-k+\sum_{j\in J}k_j} \ar@{=}[d]\\
\bigoplus_{j_i}P^\pt_J(\{\mathcal{H}^j\},\mathcal{F})^{m+\sum n_i} \ar[d]^{\widetilde{\sum}} \\
P^\pt_J(\{\mathcal{H}^j\},\mathcal{F})^{m+\sum n_i} }}
\end{equation}
where $\widetilde{\sum}$ is defined as, for $\phi\in P^\pt_I(\{\mathcal{E}^i\},\mathcal{F})^m$ and $\psi^i\in P^\pt_{J_i}(\{\mathcal{H}^j\},\mathcal{E}^i)^{n_i}$, 
\[ (\phi(\psi^i))_{k,\{k_j\}}=\sum_{\{j_i\}}(-1)^{S(\{j_i\},\{k_j\},\{n_i\})}\phi_{k,\{j_i\}}(\psi^i_{j_i,\{k_j\}}). \]
where $S(\{j_i\},\{k_j\},\{n_i\})=\sum_{p=2}^d(\sum_{i=1}^{p-1}j_p(n_i-j_i+\sum_{J_i}k_j)+(\sum_{J_i}k_j)n_p)$.
To check that the composition defined above is a map of complexes, note that
\begin{eqnarray}
(\del(\phi(\psi^i)))_{k,\{k_j\}}&=& (-1)^k\sum_{\{j_i\}}(-1)^{S(\{j_i\},\{k_j\},\{n_i\})}d(\phi_{k,\{j_i\}}(\psi^i_{j_i,\{k_j\}})) \\
&& + \sum_l\sum_{\{j_i\}}(-1)^{S(\{j_i\},\{k_j\},\{n_i\})}f_{kl}(\phi_{l,\{j_i\}}(\psi^i_{j_i,\{k_j\}})) \\
&+& -(-1)^{m+\sum n_i}\sum_{j=1}^{e_i}\sum_{l}\sum_{\{j_i\}}(-1)^{s(j,\{k_j\},l)+S(\{j_i\},J(j,\{k_j\},l),\{n_i\})}\nonumber \\
&& (\phi_{k,\{j_i\}}(\psi^i_{j_i,J(j,\{k_j\},l)}))(id_{H^1_{k_1}},\ldots,h^j_{lk_j},\ldots,id_{H^{p_i}_{k_{p_i}}}) 
\end{eqnarray}
whereas
\begin{eqnarray}
((\del\phi)(\psi^i))_{k,\{k_j\}}&=& \sum_{\{j_i\}}(-1)^{S(\{j_i\},\{k_j\},\{n_i\})}\left((-1)^kd\phi_{k,\{j_i\}}(\psi^i_{j_i,\{k_j\}})\right. \\
&&  +\sum_lf_{kl}\phi_{l,\{j_i\}}(\psi^i_{j_i,\{k_j\}}) \\
&&  -(-1)^m\sum_{p=1}^d\sum_l(-1)^{s(p,\{j_i\},l)+(j_p-l_p+1)(\sum_{i<p}(n_i-j_i+\sum_{J_i}k_j))}\nonumber \\
&& \left. \phi_{k,J(p,\{j_i\},l)}(\psi^1_{j_1,\{k_j\}},\ldots,e^p_{lj_p}\psi^p_{j_p\{k_j\}},\ldots,\psi^d_{j_d\{k_j\}})\right)
\end{eqnarray}
and
\begin{eqnarray}
&{\ds\sum_{p=1}^d}& (-1)^{m+\sum_{i=1}^{p-1}n_i}(\phi(\psi^1,\ldots,\del\psi^p,\ldots,\psi^d))_{k,\{k_j\}}\\
&=& \sum_{p=1}^d\sum_{\{j_i\}}(-1)^{m+\sum_{i=1}^{p-1}n_i+S(\{j_i\},\{k_j\},\{n_1,\ldots,n_p+1,\ldots,n_d\})}\nonumber \\
&& \left((-1)^{j_p}\phi_{k\{j_i\}}(\psi^1_{j_1\{k_j\}},\ldots,d\psi^p_{j_p\{k_j\}},\ldots,\psi^d_{j_d,\{k_j\}})\right. \\
&& +\sum_l \phi_{k,\{j_i\}}(\psi^1_{j_1\{k_j\}},\ldots,e^p_{j_pl}\psi^p_{l\{k_j\}},\ldots,\psi^d_{j_d\{k_j\}}) \\
&& -(-1)^{n_p}\sum_{j=1}^{e_p}\sum_l(-1)^{s(j,\{k_j\},l)+\sum_{i>p}(n_i-j_i+\sum_{J_i}k_j)(\sum_{j\in J_p}(k_j-l+1))} \nonumber \\
&& (\phi_{k,\{j_i\}}(\psi^p_{j_p,J(j,\{k_j\},l)}))(id_{H^1_{k_1}},\ldots,h^j_{lk_j},\ldots,id_{H^{p_i}_{k_{p_i}}})
\end{eqnarray}

Comparing signs of the like terms and using the definitions of $s$ and $S$, it follows by easy computation that 
\[\del(\phi(\psi^i))=(\del\phi)(\psi^i)+\sum_{p=1}^d(-1)^{m+\sum_{i=1}^{p-1}n_i}\phi(\psi^1,\ldots,\del\psi^p,\ldots,\psi^d)\]
Next, to check that the composition defined above is associative, note that, for surjective maps $K\twoheadrightarrow J\twoheadrightarrow I$, and 
$\phi\in P^\pt_I(\{\mathcal{E}^i\},\mathcal{F})^m$, $\psi^i\in P^\pt_{J_i}(\{\mathcal{H}^j\},\mathcal{E}^i)^{n_i}$ and $\rho^j\in P^\pt_{K_j}(\{\mathcal{G}^k\},\mathcal{H}^j)^{p_j}$,
\begin{eqnarray*}
((\phi(\psi^i))(\rho^j))_{k,\{q_l\}}&=& \sum_{\{k_j\}}(-1)^{S(\{k_j\},\{q_l\},\{p_j\})}(\phi(\psi^i))_{k,\{k_j\}}(\rho^j_{k_j,\{q_l\}})\\
&=& \sum_{\{k_j\}}\sum_{\{j_i\}}(-1)^{S(\{k_j\},\{q_l\},\{p_j\})+S(\{j_i\},\{k_j\},\{n_i\})}(\phi_{k\{j_i\}}(\psi^i_{j_i\{k_j\}}))(\rho^j_{k_j,\{q_l\}}) \\
=\hspace{14mm}&&\hspace{-21mm} \sum_{\{k_j\}}\sum_{\{j_i\}}(-1)^{S(\{k_j\},\{q_l\},\{p_j\})+S(\{j_i\},\{k_j\},\{n_i\})+\sum_{p=1}^d\sum_{i<p}(\sum_{J_i}(p_j-k_j+\sum_{l\in K_j}q_l))(n_p-j_p+\sum_{J_p}k_j)}\\
&& \hspace{4mm}(\phi_{k\{j_i\}}(\psi^i_{j_i\{k_j\}}(\rho^j_{k_j,\{q_l\}}))) 
\end{eqnarray*}
while
\begin{eqnarray*}
(\phi(\psi^i(\rho^j)))_{k,\{q_l\}}&=&\sum_{\{j_i\}}(-1)^{S(\{j_i\},\{q_l\},\{n_i+\sum_{J_i}p_j\})}\phi_{k,\{j_i\}}(\psi^i(\rho^j))_{j_i\{q_l\}} \\
&=& \sum_{\{j_i\}}\sum_{\{k_j\}}(-1)^{S(\{j_i\},\{q_l\},\{n_i+\sum_{J_i}p_j\})+\sum_{i=1}^dS(\{k_j\}_{j\in J_i},\{q_l\},\{p_j\}_{j\in J_i})}\\
&& \phi_{k\{j_i\}}(\psi^i_{j_i\{k_j\}_{j\in J_i}}(\rho^j_{k_j,\{q_l\}}))
\end{eqnarray*}
Straightforward computation shows that \\
$S(\{j_i\},\{q_l\},\{n_i+\sum_{J_i}p_j\})+\sum_{i=1}^dS(\{k_j\}_{j\in J_i},\{q_l\},\{p_j\}_{j\in J_i})=S(\{k_j\},\{q_l\},\{p_j\})+$\\ $S(\{j_i\},\{k_j\},\{n_i\})+\sum_{p=1}^d\sum_{i<p}(\sum_{J_i}(p_j-k_j+\sum_{l\in K_j}q_l))(n_p-j_p+\sum_{J_p}k_j)$, and hence
\[(\phi(\psi^i))(\rho^j)=\phi(\psi^i(\rho^j))\]

We collect our construction in the following proposition:

\begin{prop}\label{prop:PreTrPSDef} Given a pseudo-tensor structure on a DG category $\mc{C}$, the complexes \eqref{eqn:PSDef} with differential \eqref{eqn:PSDifDef} and with composition law \eqref{eqn:PSComp} define a pseudo-tensor structure on 
Pre-Tr$(\mathcal{C})$.
\end{prop}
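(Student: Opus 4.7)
The proof is essentially a long but conceptually routine bookkeeping exercise, since much of the calculation has already been carried out in the paragraphs leading up to the proposition. My plan is to organize the verification into four self-contained checks, corresponding to the four axioms of a pseudo-tensor structure on a DG category, and to exploit the symmetry of the sign function $s(i,j_i,l)$ and $S(\{j_i\},\{k_j\},\{n_i\})$ to keep the cancellations visible.

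First I would verify that $\partial$ squares to zero on each $P_m^{\pt}(\{\mc{E}^i\},\mc{F})^*$. The differential has three pieces: the internal $\mc{C}$-differential $(-1)^k d$, the contribution from the morphisms $f_{kl}$ in the target $\mc{F}$, and the contributions from the morphisms $e^i_{lj_i}$ in each source $\mc{E}^i$. Applying $\partial$ twice produces nine cross-terms, grouped as in the computation given above: (i) $d^2=0$ kills the pure $d$-term; (ii) the defining identity $(-1)^k d e_{jk}+\sum e_{ik}e_{kj}=0$ for each $\mc{E}^i$ and for $\mc{F}$ makes the $d$-$e$ and $d$-$f$ cross-terms cancel the pure $e$-$e$ and $f$-$f$ quadratic terms; (iii) the mixed $e^i$-$e^q$ terms for $i\neq q$ cancel in pairs by swapping the order of composition, using the sign formula $s(i,\{j_i\},l)+s(q,J(i,\{j_i\},l),p)-(s(q,\{j_i\},p)+s(i,J(q,\{j_i\},p),l))\equiv (j_i-l+1)(j_q-p+1)+1\pmod 2$ together with the Koszul sign on permuting $e^i_{lj_i}$ and $e^q_{pj_q}$; (iv) the $f$-$e^i$ cross-terms cancel by the Leibniz rule satisfied by $\circ$ in the original pseudo-tensor structure on $\mc{C}$.

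Second, I would check that \eqref{eqn:PSComp} is a map of complexes, i.e.\ the graded Leibniz rule $\partial(\phi(\psi^i))=(\partial\phi)(\psi^i)+\sum_p(-1)^{m+\sum_{i<p}n_i}\phi(\psi^1,\dots,\partial\psi^p,\dots,\psi^d)$. Expanding both sides, the terms of three types appear: a $d$-on-$\mc{C}$-components term (handled by the fact that the original $\circ$ on $\mc{C}$ is a map of complexes), an $f$-on-target term (which matches on both sides immediately), and $e^p$-on-source terms (which migrate between $\phi$ and $\psi^p$ by absorbing an $e^p_{lj_p}$ into one of the arguments). The tricky part is reconciling the sign $S(\{j_i\},\{k_j\},\{n_i\})$ after replacing $n_p$ by $n_p+1$; this is a direct check by writing out $S$ as a sum and comparing parity.

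Third, I would verify associativity for a chain of surjections $K\twoheadrightarrow J\twoheadrightarrow I$. On the $\mc{C}$-level this is the associativity already assumed for $P^{\mc{C}}$, up to the Koszul sign in definition~\ref{pstensor}(2); on the Pre-Tr level the extra signs are exactly the $S$-signs. The combinatorial identity to verify, namely
\[
S(\{j_i\},\{q_l\},\{n_i+{\textstyle\sum_{J_i}}p_j\})+\sum_{i=1}^d S(\{k_j\}_{j\in J_i},\{q_l\},\{p_j\}_{j\in J_i})
\]
equals
\[
S(\{k_j\},\{q_l\},\{p_j\})+S(\{j_i\},\{k_j\},\{n_i\})+\sum_{p=1}^d\sum_{i<p}\Bigl({\textstyle\sum_{J_i}}(p_j-k_j+{\textstyle\sum_{K_j}}q_l)\Bigr)(n_p-j_p+{\textstyle\sum_{J_p}}k_j),
\]
is the heart of the matter and is purely a rearrangement of integers.

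Finally, for the identity axiom, I take $\id_\mc{E}\in P_1^{\pt}(\{\mc{E}\},\mc{E})^0$ to be the element with $(\id_\mc{E})_{k,\{j\}}=\id_{E_k}$ if $j=k$ and zero otherwise, where $\id_{E_k}\in P_1^{\mc{C}}(\{E_k\},E_k)^0$. A direct check using \eqref{eqn:PSDifDef} shows $\partial\id_\mc{E}=0$ because the $d$-term and the two $e$-terms cancel via $(-1)^k d\,\id_{E_k}+\sum_l e_{kl}\circ e_{lk}=0$ (which is $0$ since $d\,\id=0$ and the cross-terms $e\circ e$ vanish on the diagonal by the Pre-Tr relation), and the unit property $\id_\mc{F}(\phi)=\phi=\phi(\id_{\mc{E}^i})$ follows termwise from the unit in $P^{\mc{C}}$. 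The main obstacle throughout is simply keeping the signs consistent; the cleanest way to organize this is to introduce the shift-and-swap convention $(\text{deg}=\text{cube index}-\text{filtration})$ once and then let Koszul bookkeeping do the rest.
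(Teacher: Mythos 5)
Your four-check organization mirrors what the paper actually does in the computation block preceding the proposition: $\del^2=0$ via the nine cross-terms and the Pre-Tr relation $(-1)^i d e_{ij}+\sum_k e_{ik}e_{kj}=0$; the Leibniz rule for $\circ$ by comparing the three displayed expansions; and associativity by the $S$-sign identity. The one genuine difference is that you add an explicit verification of the identity axiom, which the paper silently omits; this is a welcome addition.

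However, your stated reason for $\del\,\id_{\mc{E}}=0$ is wrong. With $(\id_{\mc{E}})_{k,\{j\}}=\id_{E_k}\cdot[j=k]$ and $m=1$, $n=0$, the differential is
\[
(\del\,\id_{\mc{E}})_{k,\{j\}}=(-1)^k d\bigl(\id_{E_k}[j=k]\bigr)+\sum_l e_{kl}\bigl((\id_{\mc{E}})_{l,\{j\}}\bigr)-\sum_l(-1)^{s(1,j,l)}(\id_{\mc{E}})_{k,\{l\}}(e_{lj}).
\]
The first term vanishes because $d\,\id=0$. The second term collapses to $e_{kj}$ (only $l=j$ contributes). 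The third term collapses to $-(-1)^{s(1,j,k)}e_{kj}=-e_{kj}$ (only $l=k$ contributes, and for a single source the sign $s(1,j,k)=0$). These cancel directly. Your proposed identity $(-1)^k d\,\id_{E_k}+\sum_l e_{kl}\circ e_{lk}=0$ is not the relevant computation, and the claim that ``the cross-terms $e\circ e$ vanish on the diagonal by the Pre-Tr relation'' is false: the Pre-Tr relation gives $\sum_l e_{kl}e_{lk}=-(-1)^k d e_{kk}$, which need not be zero. The conclusion $\del\,\id_{\mc{E}}=0$ is correct, but the reasoning must be replaced by the cancellation of the $e_{kj}$ and $-e_{kj}$ contributions above.

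One further small imprecision: your point (iv), that the $f$-$e^i$ cross-terms ``cancel by the Leibniz rule,'' undersells what is actually happening. Those terms appear identically in \eqref{fdel} and \eqref{dele} with overall signs $-(-1)^n$ and $-(-1)^{n+1}$, so they cancel directly by sign; the pseudo-tensor associativity (with its Koszul sign) is needed only to bring them into the same written form. Stating it this way makes the bookkeeping transparent rather than invoking Leibniz vaguely.
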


For later use, we prove the following result:


\begin{lem} \label{A}
Let $\{\mathcal{E}^1,\ldots,\mathcal{E}^m\}$ be objects of Pre-Tr$(\mathcal{C})$. \\
1.  Let $\mathcal{F}$ be in Pre-Tr$(\mathcal{C})$. Then 
\[
P^\pt_m(\{\mathcal{E}^1,\ldots,\mathcal{E}^m\}, \mathcal{F}[1])\cong 
P^\pt_m(\{\mathcal{E}^1,\ldots,\mathcal{E}^m\}, \mathcal{F})[1].
\]
2. Let $\psi:\mathcal{F}\to \mathcal{G}$ be a morphism in $Z^0$Pre-Tr$(\mathcal{C})$. Then 
\[
P^\pt_m(\{\mathcal{E}^1,\ldots,\mathcal{E}^m\},\Cone(\psi))\cong \Cone[P^\pt_m(\{\mathcal{E}^1,\ldots,\mathcal{E}^m\}, \mathcal{F})\to
P^\pt_m(\{\mathcal{E}^1,\ldots,\mathcal{E}^m\}, \mathcal{G})].
\]
\end{lem}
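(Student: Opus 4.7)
Both parts follow from the formula
\[
P_m^{\pt}(\{\mathcal{E}^i\},\mathcal{F})^n= \bigoplus_{j_i, k} P_m^\mathcal{C}(\{E^i_{j_i}\},F_k)^{n-k+\sum j_i}
\]
by decomposing the right-hand side according to the $k$-piece and tracking the differential.

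For part 1, I would simply reindex. Writing $\tilde F_k:=(\mathcal{F}[1])_k = F_{k+1}$ and substituting $k'=k+1$ shows
\[
P^\pt_m(\{\mathcal{E}^i\},\mathcal{F}[1])^n = \bigoplus_{j_i,k'}P^\mathcal{C}_m(\{E^i_{j_i}\},F_{k'})^{(n+1)-k'+\sum j_i}=P^\pt_m(\{\mathcal{E}^i\},\mathcal{F})^{n+1},
\]
so the graded groups agree. To check compatibility of differentials, recall that by definition $\tilde f_{kl}=-f_{k+1,l+1}$, while shifting a Hom-complex negates its differential. Tracing the three summands of \eqref{eqn:PSDifDef} through this reindexing, the factor of $-1$ coming from $\tilde f_{kl}$ together with the change of $(-1)^k$ to $(-1)^{k+1}$ in the $d$-term, and the change of $(-1)^n$ to $(-1)^{n+1}$ in the $e^i$-term, jointly produce an overall sign of $-1$, which is exactly the shift convention. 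Hence the reindexing is a (degree $0$) isomorphism of complexes $P^\pt_m(\{\mathcal{E}^i\},\mathcal{F}[1])\cong P^\pt_m(\{\mathcal{E}^i\},\mathcal{F})[1]$.

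For part 2, I would use $\Cone(\psi)_k=G_k\oplus F_{k+1}$ to split
\[
P^\pt_m(\{\mathcal{E}^i\},\Cone(\psi))^n\cong P^\pt_m(\{\mathcal{E}^i\},\mathcal{G})^n\;\oplus\;P^\pt_m(\{\mathcal{E}^i\},\mathcal{F}[1])^n,
\]
and then apply part 1 to identify the second summand with $P^\pt_m(\{\mathcal{E}^i\},\mathcal{F})^{n+1}$. Writing $\phi_{k,\{j_i\}}=(\phi^G_{k,\{j_i\}},\phi^F_{k,\{j_i\}})$ and using the matrix form
\[
cone(\psi)_{kl}=\begin{pmatrix} g_{kl} & \psi_{k,l+1}\\ 0 & f[1]_{kl}\end{pmatrix},
\]
the differential \eqref{eqn:PSDifDef} splits into its $G$- and $F$-components. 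The $F$-component is exactly what part 1 handles and gives $-\del_{\mathcal{F}}$ on the reindexed element $\tilde\phi^F$. The $G$-component equals $\del_{P^\pt(\ldots,\mathcal{G})}(\phi^G)$ plus the off-diagonal contribution $\sum_l\psi_{k,l+1}(\phi^F_{l,\{j_i\}})=\psi_*(\tilde\phi^F)$, where $\psi_*$ is the map induced by composition with $\psi$ (an instance of the composition law \eqref{eqn:PSComp} with trivial sign since $|I|=1$). This is precisely the cone differential of $\psi_*:P^\pt_m(\{\mathcal{E}^i\},\mathcal{F})\to P^\pt_m(\{\mathcal{E}^i\},\mathcal{G})$.

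The only real obstacle is the bookkeeping of signs: one must check that the $(-1)^k$, $(-1)^n$, and the sign inside $f[1]_{kl}=-f_{k+1,l+1}$ combine correctly to match the shift convention $d_{X[1]}=-d_X$ and the cone convention used in the paper, $d_{\Cone}=\bigl(\begin{smallmatrix}d_Y&f\\ 0&d_{A[1]}\end{smallmatrix}\bigr)$. Once the calculation in part 1 is carried out carefully, the verification in part 2 is a direct substitution; no further sign issues enter because neither the $e^i$-term nor the reindexing in the $\mathcal{E}^i$ variables is affected by the shift or the cone construction on the target.
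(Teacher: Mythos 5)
Your proposal is correct and follows essentially the same route as the paper's own proof: for part 1, reindex $k\mapsto k+1$ to match the graded pieces and then track how each of the three summands in the differential formula \eqref{eqn:PSDifDef} acquires a factor $-1$ (from $(-1)^k\mapsto(-1)^{k+1}$, from $f[1]_{kl}=-f_{k+1,l+1}$, and from $(-1)^n\mapsto(-1)^{n+1}$ respectively), yielding $\del_1=-\del$ on the reindexed element, i.e. the shifted differential; for part 2, split $\Cone(\psi)_k=G_k\oplus F_{k+1}$, use part 1 to identify the second summand, and verify that the off-diagonal entry $\psi_{k,l+1}$ produces exactly the extra term needed for the cone differential. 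One very minor wording issue: when you say the three sign changes "jointly produce an overall sign of $-1$," the mechanism is really that each additive term of $\del$ separately picks up a factor $-1$, so the whole sum flips sign; the three signs are not multiplied together. The mathematics is correct, just the phrasing slightly understates that this must happen uniformly in each of the three summands.
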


\begin{proof}
We note that \begin{eqnarray*}
 P_m^{\pt}(\{\mathcal{E}^i\}_{i\in I},\mathcal{F}[1])^n&=& \bigoplus_{\begin{array}{c} N_i\leq j_i\leq M_i\\
N'-1\leq k\leq M'-1\end{array}} P_m^\mathcal{C}(\{E^i_{j_i}\}_{i\in I},F_{k+1})^{n-k+\sum_{i\in I}j_i} \\
&=&  P_m^{\pt}(\{\mathcal{E}^i\}_{i\in I},\mathcal{F})^{n+1}
\end{eqnarray*}
Let $\del_1$ be the differential on $P_m^{\pt}(\{\mathcal{E}^i\}_{i\in I},\mathcal{F}[1])^*$ and $\del$ the differential
on $P_m^{\pt}(\{\mathcal{E}^i\}_{i\in I},\mathcal{F})^*$.
Now, for $\phi\in P_m^{\pt}(\{\mathcal{E}^i\}_{i\in I},\mathcal{F}[1])^n$, it follows by direct computation that
\[(\del_1^n(\phi))_{k,\{j_i\}}=-(\del^{n+1}(\phi))_{k,\{j_i\}} \]
That is, $\del_1=\del[1]$. Thus, $P_m^{\pt}(\{\mathcal{E}^i\}_{i\in I},\mathcal{F}[1])=P_m^{\pt}(\{\mathcal{E}^i\}_{i\in I},\mathcal{F})[1]$.

To see the second assertion, first note that $P_m^{\pt}(\{\mathcal{E}^i\}_{i\in I},\mc{G}\oplus\mathcal{F}[1])^n=
 P_m^{\pt}(\{\mathcal{E}^i\}_{i\in I},\mathcal{G})^n\oplus P_m^{\pt}(\{\mathcal{E}^i\}_{i\in I},\mathcal{F})[1]^n$ as
abelian groups. 
If $\phi=(\phi^1,\phi^2)\in P_m^{\pt}(\{\mathcal{E}^i\}_{i\in I},\Cone(\psi))^n=P_m^{\pt}(\{\mathcal{E}^i\}_{i\in I},\mathcal{G})^n\oplus P_m^{\pt}(\{\mathcal{E}^i\}_{i\in I},\mathcal{F})[1]^n$, we check that
\[\del_{\Cone(\psi)}\phi=\big(\del_\mc{G}\phi^1)_{k,\{j_i\}}+\psi_{k,l+1}\phi^2_{l+1,\{j_i\}},(\del_{\mc{F}[1]}\phi^2)_{k,\{j_i\}}\big)=d(\phi^1,\phi^2) \]
where $d$ is the differential on $\Cone[P^\pt_m(\{\mathcal{E}^1,\ldots,\mathcal{E}^m\}, \mathcal{F})\to
P^\pt_m(\{\mathcal{E}^1,\ldots,\mathcal{E}^m\}, \mathcal{G})]$.
\end{proof}

\phantomsection
\section{Pseudo-tensor structure on the category $\mathcal{C}^\pt$}\label{sec:Pseudo-tensorPT}
We now suppose that $\mathcal{C}$ has the structure of a pseudo-tensor DG category.
Let $\mathcal{E},\{\mathcal{G}^k\}_{1\leq k\leq n}$ be objects in $\mathcal{C}^\pt$. Then, we have functors 
\[ P_{\mathcal{E},\{\mathcal{G}^k\}}:=P_{n+1}^\pt(\{\mathcal{E},\mathcal{G}^1,\ldots,\mathcal{G}^n\},i(\blank))^* \in C_{dg}(\mathcal{C}^{op}) \]
\begin{prop} \label{cone}
Let $\mathcal{E},\mathcal{F},\{\mathcal{G}^k\}_{1\leq k\leq n}$ be objects in $\mathcal{C}^\pt$ and let $\psi\in Z^0Hom_{\mathcal{C}^\pt}(\mathcal{E},\mathcal{F})$.
If $P_{\mathcal{E},\{\mathcal{G}^k\}}$ and $P_{\mathcal{F},\{\mathcal{G}^k\}}$
are representable in $\mathcal{C}^\pt$, then so is $P_{Cone(\psi),\{\mathcal{G}^k\}}$ and is isomorphic (up to a translation) to the 
${Cone(P_{\mathcal{F},\{\mathcal{G}^k\}}\xrightarrow{\psi^*} P_{\mathcal{E},\{\mathcal{G}^k\}})}$ .
\end{prop}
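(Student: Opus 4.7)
My strategy is to compute $P_{Cone(\psi),\{\mathcal{G}^k\}}$ directly by exploiting the additive structure of $Cone(\psi)$, identify the resulting complex (up to a shift) with $Cone(\psi^*)$, and then transport representability from $\mathcal{E}$ and $\mathcal{F}$ to $Cone(\psi)$ by forming a cone in $\mathcal{C}^\pt$.

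First I would prove a first-argument analog of Lemma~\ref{A}. Writing $Cone(\psi)_i = F_i \oplus E_{i+1}$ and using additivity of $P^\mathcal{C}_{n+1}$ in each entry (Corollary after Lemma~\ref{commut}) together with the re-indexing $j'_1 = j_1 + 1$, the graded abelian group $P^\pt_{n+1}(\{Cone(\psi),\mathcal{G}^k\},\mathcal{H})^p$ splits as
\[
P^\pt_{n+1}(\{\mathcal{F},\mathcal{G}^k\},\mathcal{H})^p \oplus P^\pt_{n+1}(\{\mathcal{E},\mathcal{G}^k\},\mathcal{H})^{p-1}.
\]
Inserting the block form $cone(\psi)_{ij}=\left(\begin{smallmatrix}f_{ij}&\psi_{i,j+1}\\ 0&e[1]_{ij}\end{smallmatrix}\right)$ into the differential formula \eqref{eqn:PSDifDef} shows that the two diagonal blocks recover the differentials on $P_\mathcal{F}$ and $P_\mathcal{E}[-1]$ respectively, while the off-diagonal block produced by the entries $\psi_{i,j+1}$ agrees with the map induced by $\psi^*$ (this last step is essentially an unwinding of the definition of composition in $P^\pt$). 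Thus, up to a translation,
\[
P_{Cone(\psi),\{\mathcal{G}^k\}} \;\cong\; Cone\bigl(P_{\mathcal{F},\{\mathcal{G}^k\}}\xrightarrow{\psi^*}P_{\mathcal{E},\{\mathcal{G}^k\}}\bigr).
\]

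Next I would convert this into representability. By hypothesis there exist $\mathcal{X}_\mathcal{E},\mathcal{X}_\mathcal{F}\in\mathcal{C}^\pt$ with isomorphisms $\lambda_\bullet : P_{\bullet,\{\mathcal{G}^k\}}\iso Hom_{\mathcal{C}^\pt}(\mathcal{X}_\bullet,\blank)$ in $D(\mathcal{C}^{op})$. By the cited result from \cite{smmot}, the composition $\mathcal{C}^\pt \hookrightarrow C_{dg}(\mathcal{C})\to D(\mathcal{C})$ is fully faithful; consequently the morphism $\lambda_\mathcal{E}\circ\psi^*\circ\lambda_\mathcal{F}^{-1}$ in $D(\mathcal{C}^{op})$ corresponds by Yoneda to a unique morphism in $H^0\mathcal{C}^\pt$ from $\mathcal{X}_\mathcal{E}$ to $\mathcal{X}_\mathcal{F}$, which we lift to a cocycle $\tilde\psi\in Z^0Hom_{\mathcal{C}^\pt}(\mathcal{X}_\mathcal{E},\mathcal{X}_\mathcal{F})$. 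Since $\mathcal{C}^\pt$ is closed under cones, $Cone(\tilde\psi)\in\mathcal{C}^\pt$, and applying $Hom_{\mathcal{C}^\pt}(\blank,\mathcal{H})$ to the cone sequence yields, via the analogue of Lemma~\ref{A}(2) for $Hom_{\mathcal{C}^\pt}$, a natural identification of $Hom_{\mathcal{C}^\pt}(Cone(\tilde\psi),\blank)$ with $Cone(\psi^*)$ (up to the same translation). Combined with the computation of Step~1 this exhibits $Cone(\tilde\psi)$ (possibly translated) as a representing object for $P_{Cone(\psi),\{\mathcal{G}^k\}}$.

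The main obstacle is bookkeeping: identifying the correct shift in the equivalence $P_{Cone(\psi),\{\mathcal{G}^k\}}\cong Cone(\psi^*)$ and keeping track of the signs coming from $e[1]_{ij}=(-1)e_{i+1,j+1}$ so that the off-diagonal block really produces the chain-level map $\psi^*$, not a shifted or sign-twisted variant. Once the signs are pinned down, the Yoneda/lifting step is routine, since it takes place in the triangulated category $H^0\mathcal{C}^\pt$.
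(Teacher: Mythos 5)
Your proposal follows the paper's own proof in structure and substance: the paper also first proves (as an interior lemma) that $P_{Cone(\psi),\{\mathcal{G}^k\}}^*$ equals $Cone(\psi^*)[-1]$ by the same direct decomposition $Cone(\psi)_i=F_i\oplus E_{i+1}$ and computation with \eqref{eqn:PSDifDef}, and then transports the representing objects across $\psi^*$ by producing a morphism $\Psi\in Z^0Hom_{\mathcal{C}^\pt}(\mathcal{A},\mathcal{B})$ via Yoneda/evaluation at $\id$ and taking $Cone(\Psi)$. Your two steps and the cited fully-faithfulness are exactly what the paper uses, so the proposal is correct and not a genuinely different route.
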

To prove the above proposition, we begin by proving the following lemma. 
\begin{lem}
Let $\mathcal{E},\mathcal{F},\psi,\mathcal{G}^k$ be as above. Then the cone sequence 
\[ \ra Cone(\psi)[-1]\ra \mathcal{E}\stackrel{\psi}{\ra}\mathcal{F}\ra Cone(\psi) \ra \cdots \]
in  $\mathcal{C}^\pt$ 
 induces a cone sequence
\[\begin{xy}\xymatrix{
 \cdots \ar[r]&P_{\mathcal{F},\{\mathcal{G}^k\}}^*\ar[r]^{\psi^*}\ar@{=}[d] & 
P_{\mathcal{E},\{\mathcal{G}^k\}}^* \ar[r]\ar@{=}[d]& 
P_{Cone(\psi)[-1],\{\mathcal{G}^k\}}^* \ar@{=}[d] \ar[r] & 
P_{\mathcal{F}[-1],\{\mathcal{G}^k\}}^* \ar@{=}[d]\ar[r] & \\
\cdots \ar[r]&P_{\mathcal{F},\{\mathcal{G}^k\}}^*\ar[r]^{\psi^*}&P_{\mathcal{E},\{\mathcal{G}^k\}}^* \ar[r] & 
Cone(\psi^*) \ar[r]& P_{\mathcal{F},\{\mathcal{G}^k\}}[1]^*\ar[r]& }
\end{xy}\]
in $C_{dg}(\mathcal{C}^{op})$. Here the map $\psi^*:P_{\mathcal{F},\{\mathcal{G}^k\}}^*\to P_{\mathcal{E},\{\mathcal{G}^k\}}^*$ is defined by 
\[ (\psi^*(\phi))_{0,\{i,j_1,\ldots,j_n\}}={\ds\sum_{l}(-1)^{(i-l)\sum j_k}\phi_{0,\{l,j_1,\ldots,j_n\}}(\psi_{li},id_{G^1_{j_1}},\ldots,id_{G^n_{j_n}})}\]
for $\phi\in P_{\mathcal{F},\{\mathcal{G}^k\}}^*$.
\end{lem}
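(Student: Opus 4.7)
The plan is to compute the functor $P^\pt_{n+1}(-, i(\blank))$ directly on the cone sequence in its first (source) slot, and to identify the resulting sequence of complexes in $C_{dg}(\mathcal{C}^{op})$ term-by-term with the standard cone sequence associated to $\psi^*$. Concretely, the two vertical equalities in the display are the content that must be established:
\[
P_{\mathcal{F}[-1],\{\mathcal{G}^k\}}^* = P_{\mathcal{F},\{\mathcal{G}^k\}}[1]^*\quad\text{and}\quad P_{Cone(\psi)[-1],\{\mathcal{G}^k\}}^* \cong Cone(\psi^*).
\]

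First I would establish a source-slot analogue of Lemma \ref{A}(1). The direct-sum formula \eqref{eqn:PSDef} applied with some $\mathcal{E}^{i_0}$ replaced by $\mathcal{E}^{i_0}[-1]$ (so $(\mathcal{E}^{i_0}[-1])_j = E^{i_0}_{j-1}$) yields, by re-indexing the summation variable $j_{i_0}$, an identification of graded abelian groups $P^\pt(\{\ldots,\mathcal{E}^{i_0}[-1],\ldots\},\mathcal{F})^p = P^\pt(\{\ldots,\mathcal{E}^{i_0},\ldots\},\mathcal{F})^{p+1}$. Inspecting the differential \eqref{eqn:PSDifDef} — using the defining formula $e^{i_0}[-1]_{jk} = -e^{i_0}_{j-1,k-1}$ — gives the sign that identifies this as $P^\pt(\{\ldots,\mathcal{E}^{i_0},\ldots\},\mathcal{F})[1]^*$. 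Applied to the $\mathcal{F}$-slot this yields the right-hand equality above.

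Next, for the cone identification, I would split $Cone(\psi)[-1]_j = F_{j-1}\oplus E_j$ inside the direct sum \eqref{eqn:PSDef}. A re-indexing in the $F$-summand only (sending $j\mapsto j-1$ in the $\mathcal{F}$ component, and leaving the $\mathcal{E}$ component alone) gives a direct-sum decomposition
\[
P_{Cone(\psi)[-1],\{\mathcal{G}^k\}}^n = P_{\mathcal{E},\{\mathcal{G}^k\}}^n \oplus P_{\mathcal{F},\{\mathcal{G}^k\}}[1]^n,
\]
which agrees with $Cone(\psi^*)^n$ as a graded abelian group. The block form of the connecting morphism $Cone(\psi)_{jk}=\begin{pmatrix} f_{jk} & \psi_{j,k+1}\\ 0 & e[1]_{jk}\end{pmatrix}$ decomposes the source-slot contribution to the differential \eqref{eqn:PSDifDef}: the two diagonal blocks produce precisely $\del_{P_\mathcal{E}}$ and $\del_{P_\mathcal{F}[1]}$, while the off-diagonal $\psi$-block produces an operator from the $P_\mathcal{F}[1]$-summand to the $P_\mathcal{E}$-summand; comparing the sign coming out of the composition formula \eqref{eqn:PSComp} (where all identity insertions in the $\mathcal{G}^k$-slots have $n_i=0$, collapsing $S$ to $(i-l)\sum j_k$) with the formula given for $\psi^*$ identifies this off-diagonal operator with $\psi^*$. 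This is the key calculation.

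Finally, I would check the connecting maps. The two non-trivial arrows of the top row come from applying $P^\pt$ contravariantly to the maps $p[-1]: Cone(\psi)[-1]\to \mathcal{E}$ and $i[-1]:\mathcal{F}[-1]\to Cone(\psi)[-1]$. Under the decomposition above, the first of these is induced by the inclusion $E_j\hookrightarrow F_{j-1}\oplus E_j$, which becomes the standard cone inclusion $P_\mathcal{E}\hookrightarrow Cone(\psi^*)$; the second is induced by the projection $F_{j-1}\oplus E_j\twoheadrightarrow F_{j-1}$, which becomes the standard cone projection $Cone(\psi^*)\twoheadrightarrow P_\mathcal{F}[1]$. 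The main obstacle will be the sign bookkeeping in the second and third paragraphs: verifying that the sign $(-1)^{(i-l)\sum j_k}$ appearing in the explicit formula for $\psi^*$ matches precisely the sign produced by the $\psi$-entry of the cone differential inside the source-slot contribution to \eqref{eqn:PSDifDef}. This is a careful but entirely mechanical computation, analogous to the sign verifications already carried out in the proof of Proposition \ref{prop:PreTrPSDef} and Lemma \ref{A}.
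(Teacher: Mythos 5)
Your proposal matches the paper's proof in substance: both decompose the direct sum \eqref{eqn:PSDef} applied to the cone using $Cone(\psi)_j = F_j \oplus E_{j+1}$, re-index the $F$-summand by a degree shift, and then verify that the block form of the differential \eqref{eqn:PSDifDef} reproduces the cone differential with off-diagonal entry $\psi^*$. The only (cosmetic) difference is that the paper proves the equivalent identity $Cone(\psi^*)[-1]=P_{Cone(\psi),\{\mathcal{G}^k\}}^*$ rather than $P_{Cone(\psi)[-1],\{\mathcal{G}^k\}}^*\cong Cone(\psi^*)$, and you separate out the source-slot shift $P_{\mathcal{F}[-1],\{\mathcal{G}^k\}}^*=P_{\mathcal{F},\{\mathcal{G}^k\}}[1]^*$ as a preliminary step while the paper absorbs it into the bookkeeping.
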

\begin{proof}
We show that $Cone(\psi^*)[-1]=P_{Cone(\psi),\{\mathcal{G}^k\}}^*$. We have
\[ P_{n+1}^\pt(\{Cone(\psi),\mathcal{G}^1,\ldots,\mathcal{G}^n\},i(\blank))^m= \bigoplus_{i,j_1,\ldots,j_n}P_{n+1}^\mathcal{C}
(\{F_i\oplus E_{i+1},G^1_{j_1},\ldots,G^n_{j_n}\},\blank)^{m+i+\sum j_k} \]
\begin{eqnarray*}& = & \bigoplus_{i,j_1,\ldots,j_n}P_{n+1}^\mathcal{C}
(\{F_i,G^1_{j_1},\ldots,G^n_{j_n}\},\blank)^{m+i+\sum j_k}\\&&\oplus (\bigoplus_{l,j_1,\ldots,j_n}P_{n+1}^\mathcal{C}
(\{E_{l},G^1_{j_1},\ldots,G^n_{j_n}\},\blank)[-1]^{m+l+\sum j_k})\end{eqnarray*}
Thus, $P_{Cone(\psi),\{\mathcal{G}^k\}}^m= P^m_{\mathcal{E},\{\mathcal{G}^k\}}[-1]\oplus P^m_{\mathcal{F},\{\mathcal{G}^k\}}$ as sets. We need to check that the differentials are equal to prove equality as complexes
of abelian groups. Let $\phi=(\phi_{i,j_1,\ldots,j_n})\in\bigoplus_{i,j_1,\ldots,j_n}P_{n+1}^\mathcal{C}
(\{F_i\oplus E_{i+1},G^1_{j_1},\ldots,G^n_{j_n}\},\blank)^{m+i+\sum j_k}$. As an element of $P_{Cone(\psi),\{\mathcal{G}^k\}}^*$, we have
\begin{eqnarray*}
(\del\phi)_{i,j_1,\ldots,j_n}&=& d(\phi_{i,j_1,\ldots,j_n})\\
&& \hspace{-19mm} -(-1)^m\sum_l(-1)^{(i-l+1)(\sum j_k)}\phi_{l,j_1,\ldots,j_n}(\left(\begin{array}{cc}
f_{li}&\psi_{l,i+1}\\
0&-e_{l+1,i+1}\end{array}\right),id_{G^1_{j_1}},\ldots,id_{G^n_{j_n}})\\
&& \hspace{-19mm} -(-1)^{m+i}\sum_{k=1}^n\sum_{l_k}(-1)^{s(k,l_k)}\phi_{\{i\}\cup J(k,l_k)}(id_{F_i\oplus E_{i+1}},id_{G^1_{j_1}},\ldots,g^k_{l_kj_k},\ldots,id_{G^n_{j_n}})
\end{eqnarray*}
Writing $\phi=\left(\phi^F\;\phi^E\right)$ and denoting the differentials in $P_{\mathcal{E},\{\mathcal{G}^k\}}[-1]=P_{\mathcal{E}[1],\{\mathcal{G}^k\}}$ 
and $P_{\mathcal{F},\{\mathcal{G}^k\}}$ by $\del_{\mathcal{E}[1]}$ and 
$\del_\mathcal{F}$ respectively, we check that \[ \del = \left(\begin{array}{cc}\del_\mathcal{F}& (-1)^m\psi^*\\0& \del_{\mathcal{E}[1]}\end{array}\right)
= d_{Cone(\psi^*)} \]
which completes the proof.
\end{proof}
\begin{proof}[Proof of Proposition]  This follows directly from the lemma. Indeed, suppose that $\{\mathcal{E},\{\mathcal{G}^k\}\}$ is represented by $(\mc{A},\lambda_1)$ and $\{\mathcal{F},\{\mathcal{G}^k\}\}$ is represented by $(\mc{B},\lambda_2)$. Composition with the morphism $\psi$ (and the identity maps on the $\mc{G}^k$) gives the map in $D(\mc{C}^{\pt op})$
 \[
 \psi^*:P_{\mathcal{F},\{\mathcal{G}^k\}}\to P_{\mathcal{E},\{\mathcal{G}^k\}}
 \]
Via the isomorphisms $\lambda_i$, $\psi^*$ induces the map in in $D(\mc{C}^{\pt op})$
\[
\psi^*:Hom_{\mc{C}}(\mc{B},\blank)^*\to Hom_{\mc{C}}(\mc{A},\blank)^*;
\]
thus we have the map $\psi^*(\id_{\mc{B}}):\mc{A}\to\mc{B}$ in $H^0\mc{C}^\pt$ which we may lift to a morphism $\Psi$ in $Z^0\mc{C}^\pt$. Then the lemma shows that $\Cone(\Psi)$ represents $P_{Cone(\psi),\{\mathcal{G}^k\}}$.
\end{proof}

We now show that it follows from the above proposition that
\begin{prop}
The pseudo-tensor structure  $P^\pt_m$ on  $\mathcal{C}^\pt$ is representable if $P^\mathcal{C}_m$ is 
representable in $\mathcal{C}$. 
\end{prop}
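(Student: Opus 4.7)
The plan is to argue by induction on the ``complexity'' of the $m$-tuple $(\mathcal{E}^1,\ldots,\mathcal{E}^m)$, where complexity is measured by the minimum number of shift/cone operations needed to build these objects from $i_0(\mathcal{C})$. This is well-defined since by construction $\mathcal{C}^\pt$ is generated from $i_0(\mathcal{C})$ by iteratively adjoining shifts and cones. Throughout, we use Lemma~\ref{commut} to permute source arguments freely, so that we may always assume the argument being modified in the induction is the first one.

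For the base case, suppose $\mathcal{E}^i=i_0(E_i)$ for each $i$. By hypothesis there is an object $\otimes_i E_i \in \mathcal{C}$ and an isomorphism $P^\mathcal{C}_m(\{E_i\},\blank)^*\simeq Hom_\mathcal{C}(\otimes_i E_i,\blank)^*$ in $D(\mathcal{C}^{op})$. We take $\otimes_i\mathcal{E}^i:=i_0(\otimes_i E_i)$. From the definition \eqref{eqn:PSDef}, for any $\mathcal{F}=\{F_k,f_{jk}\}$ in $\mathcal{C}^\pt$ we have $P^\pt_m(\{i_0(E_i)\},\mathcal{F})^n=\bigoplus_k P^\mathcal{C}_m(\{E_i\},F_k)^{n-k}$, while $Hom_{\mathcal{C}^\pt}(i_0(\otimes_i E_i),\mathcal{F})^n=\bigoplus_k Hom_\mathcal{C}(\otimes_i E_i,F_k)^{n-k}$; the representing quasi-isomorphism applied termwise gives the desired isomorphism in $D((\mathcal{C}^\pt)^{op})$.

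For the inductive step, suppose $\mathcal{E}^1\notin i_0(\mathcal{C})$, so $\mathcal{E}^1$ is either a shift $\mathcal{E}'[k]$ or a cone $\mathrm{Cone}(\psi:\mathcal{E}'\to\mathcal{E}'')$ of objects of strictly smaller complexity. In the shift case, a direct computation (substituting $l=k+j_1$ in the indexing of \eqref{eqn:PSDef} and checking signs parallel to the proof of Lemma~\ref{A}(1), but applied in the first source slot) yields
\[
P^\pt_m(\{\mathcal{E}'[k],\mathcal{E}^2,\ldots,\mathcal{E}^m\},\blank)^*\cong P^\pt_m(\{\mathcal{E}',\mathcal{E}^2,\ldots,\mathcal{E}^m\},\blank)^*[-k];
\]
by the inductive hypothesis the right-hand side is represented by some $\mathcal{A}\in\mathcal{C}^\pt$, whence the left-hand side is represented by $\mathcal{A}[k]\in\mathcal{C}^\pt$. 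In the cone case, the inductive hypothesis provides representing objects $\mathcal{A}'$ and $\mathcal{A}''$ for $\{\mathcal{E}',\mathcal{E}^2,\ldots\}$ and $\{\mathcal{E}'',\mathcal{E}^2,\ldots\}$ respectively; Proposition~\ref{cone}, applied to the first source argument, then produces a representing object of the form $\mathrm{Cone}(\Psi)$ (for $\Psi:\mathcal{A}'\to\mathcal{A}''$ a lift of $\psi$ via the representing isomorphisms) for $\{\mathrm{Cone}(\psi),\mathcal{E}^2,\ldots\}$, which lies in $\mathcal{C}^\pt$ since $\mathcal{C}^\pt$ is closed under cones.

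The main technical obstacle is the shift computation: verifying that the abelian-group identification above actually intertwines the differentials up to the expected sign $(-1)^k$. This is entirely parallel to the target-shift computation of Lemma~\ref{A}(1), but must be carried out separately in the source slot using the sign rules built into formula \eqref{eqn:PSDifDef}; it is routine but sign-sensitive, and once established, the two cases of the inductive step together with Lemma~\ref{commut} complete the argument.
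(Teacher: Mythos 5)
Your proof is correct and follows essentially the same strategy as the paper: establish representability on $i_0(\mathcal{C})^m$ as the base case, then close under source-shift (via $P^\pt_m(\{\ldots,\mathcal{E}'[k],\ldots\},\blank)\cong P^\pt_m(\{\ldots,\mathcal{E}',\ldots\},\blank)[-k]$) and cone (via Proposition~\ref{cone}), with Lemma~\ref{commut} to reduce to the first slot. The paper phrases this as showing a full subcategory $K^b(\mathcal{C})^m_{rep}$ equals everything rather than by explicit induction on complexity, and it is slightly more careful in the base case about extending the representing roof diagram from $D(\mathcal{C}^{op})$ to $D(\mathcal{C}^{\pt op})$ by taking total complexes of bounded double complexes, but the substance matches yours.
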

\begin{proof}
We make use of the fact that $\mc{C}^\pt$ is generated by $i(\mathcal{C})$ by taking translations and cones and proceed by induction. 
First note that by the representability assumption, for objects $X_1,\ldots,X_m,Y$ in $\mathcal{C}$,
$P_m^\pt(\{i(X_1),\ldots,i(X_m)\},i(\blank))^*$ is representable  by the object 
$i({\ds\otimes_{i=1}^m}X_i)$, that is,  we have a isomorphism 
\[ P_m^\pt(\{i(X_1),\ldots,i(X_m)\},\blank )^*)= P_m^\mathcal{C}(\{X_1,\ldots,X_m\},\blank)^*)\stackrel{\lambda}{\ra}Hom_\mathcal{C}(\otimes_{i=1}^mX_i,\blank)^*)\]
in $D(\mathcal{C}^{op})$. Thus $P_m^\pt(\{i(X_1),\ldots,i(X_m)\},\blank )^*$ is in the full subcategory 
$K^b(\mathcal{C}^{op})^{ess}$ of $D(\mathcal{C}^{op})$, and the above isomorphism is in $K^b(\mathcal{C}^{op})^{ess}$. 

We may represent this isomorphism as a diagram
\[
P_m^\pt(\{i(X_1),\ldots,i(X_m)\},\blank )^*)\xleftarrow{\alpha} F\xrightarrow{\beta} Hom_\mathcal{C}(\otimes_{i=1}^mX_i,\blank)^*)
\]
for some functor $F:\mathcal{C}\to C(\mathbf{Ab})$, with $\alpha$ and $\beta$ quasi-isomorphisms. Now, each functor $F:\mathcal{C}\to C(\mathbf{Ab})$ extends canonically to a functor
\[
F:\mathcal{C}^\pt\to C(\mathbf{Ab})
\]
by taking the total complex of the evident double complex. This extension is compatible with translation and taking cones. Thus, if $a:F\to F'$ is a quasi-isomorphism in $C(\mathcal{C}^{op})$, then $a$ extends canonically to a quasi-isomorphism in 
$C(\mathcal{C}^{\pt op})$. In particular, the isomorphism $\lambda$ extends to the isomorphism in $D(\mathcal{C}^{\pt op})$
\[
\lambda:P_m^\pt(\{i(X_1),\ldots,i(X_m)\},\blank )^*) \to Hom_\mathcal{C}(\otimes_{i=1}^mX_i,\blank)^*).
\]
By lemma~\ref{A}, the canonical extension of $P_m^\pt(\{i(X_1),\ldots,i(X_m)\},\blank )^*)$ agrees with the definition of 
$P_m^\pt(\{i(X_1),\ldots,i(X_m)\},\blank )^*)$ given in section~\ref{sec:Pseudo-tensorPT}.

Let $K^b(\mathcal{C})^{m}_{rep}\subset D(\mathcal{C})^m$ be the full subcategory of $ D(\mathcal{C})^m$ with objects the $m$-tuples $(\mathcal{E}^1,\ldots,\mathcal{E}^m)$ of $K^b(\mathcal{C}$ such that 
$P^\pt_m(\{\mathcal{E}^1,\ldots,\mathcal{E}^m\},\blank)$ is representable in $\mathcal{C}^{\pt }$. By our above computation, we see that 
$K^b(\mathcal{C})^{m}_{rep}$ contains $i(\mathcal{C})^m$ for each $m$.  Also note that $$P^\pt_m(\{\mathcal{E}^1,\ldots,\mathcal{E}^k[l],\ldots,\mathcal{E}^m\},\mathcal{F})=P^\pt_m(\{\mathcal{E}^1,\ldots,\mathcal{E}^m\},\mathcal{F})[-l],$$ so $K^b(\mathcal{C})^{m}_{rep}$ is closed under taking translation in each variable. It follows from Proposition \ref{cone} that $K^b(\mathcal{C})^{m}_{rep}$ is closed under the operation of taking the cone of a morphism $\psi:\mathcal{E}^i\to \mathcal{F}^i$ in the $i$th variable, and leaving all other objects the same. Thus $K^b(\mathcal{C})^{m}_{rep}=K^b(\mathcal{C})^{m}$ for each $m$, i.e., $P^\pt_m(\{\mathcal{E}^1,\ldots,\mathcal{E}^m\},\blank)$ is representable in $\mathcal{C}^{\pt}$ for all finite collections $\{\mathcal{E}^1,\ldots,\mathcal{E}^m\}$ in  $\mathcal{C}^{\pt}$.
\end{proof}

\begin{lem}
For objects $\mathcal{E},\mathcal{F},\mathcal{G}^k$ in $\mathcal{C}^\pt$, and a morphism 
$\psi\in Z^0Hom_{\mathcal{C}^\pt}(\mathcal{E},\mathcal{F})$ if 
$\mathcal{E}\otimes\mathcal{G}^1\otimes\cdots\otimes\mathcal{G}^m\stackrel{\sim}{\ra}(\mathcal{E}\otimes\mathcal{G}^1)\otimes\cdots\otimes\mathcal{G}^m$ and
$\mathcal{F}\otimes\mathcal{G}^1\otimes\cdots\otimes\mathcal{G}^m\stackrel{\sim}{\ra}(\mathcal{F}\otimes\mathcal{G}^1)\otimes\cdots\otimes\mathcal{G}^m$ are isomorphisms, then so is
$Cone(\psi)\otimes\mathcal{G}^1\otimes\cdots\otimes\mathcal{G}^m{\ra}(Cone(\psi)\otimes\mathcal{G}^1)\otimes\cdots\otimes\mathcal{G}^m$. 
\end{lem}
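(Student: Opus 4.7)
The plan is to view the morphism in the statement as the component at $\mc{X}=Cone(\psi)$ of a natural transformation between two functors $H^0\mc{C}^\pt\to H^0\mc{C}^\pt$. Fixing $\mc{G}^1,\ldots,\mc{G}^m$, define
\begin{align*}
F_1(\mc{X}) &:= \mc{X}\otimes\mc{G}^1\otimes\cdots\otimes\mc{G}^m, \\
F_2(\mc{X}) &:= (\mc{X}\otimes\mc{G}^1)\otimes\cdots\otimes\mc{G}^m,
\end{align*}
and let $\eta_{\mc{X}}\colon F_1(\mc{X})\to F_2(\mc{X})$ be the associativity morphism in the first slot. By Lemma~\ref{pipi}, $\eta_{\mc{X}}$ is a composition of $\epsilon_{\pi_k}$'s, each of which groups only the first two adjacent factors.

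By Proposition~\ref{cone}, the functor $F_1$ sends the cone sequence $\mc{E}\by{\psi}\mc{F}\to Cone(\psi)\to\mc{E}[1]$ to a distinguished triangle in $H^0\mc{C}^\pt$. The same holds for $F_2$ by iterating Proposition~\ref{cone}: first apply it to the pair $(\mc{X},\mc{G}^1)$, obtaining a distinguished triangle $\mc{E}\otimes\mc{G}^1\to\mc{F}\otimes\mc{G}^1\to Cone(\psi)\otimes\mc{G}^1\to$; then apply it again with the object $(\mc{X}\otimes\mc{G}^1)$ playing the role of $\mc{E}$ (resp.\ $\mc{F}$, $Cone(\psi)$) and the family $\{\mc{G}^2\}$ as partner; continue inductively through $\mc{G}^3,\ldots,\mc{G}^m$. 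This yields the distinguished triangle
\[
F_2(\mc{E})\to F_2(\mc{F})\to F_2(Cone(\psi))\to F_2(\mc{E})[1].
\]

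Naturality of $\eta$ with respect to $\psi$ follows from the definition of $\epsilon_\pi$ as the image of identity maps under the representability isomorphism, combined with the associativity axiom of the pseudo-tensor structure (Definition~\ref{pstensor}): both $\eta_{\mc{F}}\circ F_1(\psi)$ and $F_2(\psi)\circ\eta_{\mc{E}}$ are obtained by inserting $\psi$ and appropriate identities into the composition laws of the pseudo-tensor complex, and the associativity condition identifies these two expressions. Compatibility of $\eta$ with the connecting morphism $Cone(\psi)\to\mc{E}[1]$ follows from Lemma~\ref{A}, which describes how $P^\pt_m$ interacts with shifts. Thus we obtain a morphism of the two distinguished triangles above in which the first two vertical arrows $\eta_{\mc{E}}$ and $\eta_{\mc{F}}$ are isomorphisms by hypothesis, and the triangulated five lemma forces $\eta_{Cone(\psi)}$ to be an isomorphism as well.

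The main obstacle is the bookkeeping: iterating Proposition~\ref{cone} to establish that $F_2$ is a triangulated functor, and carefully verifying naturality of $\eta$ in the $\mc{X}$-variable through the chain of representability isomorphisms. No new ideas beyond those already developed in Sections~\ref{sec:PSDGCat} and \ref{sec:Pseudo-tensorPT} are needed.
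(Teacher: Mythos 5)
Your proof is correct and takes essentially the same route as the paper: both establish that the associativity map commutes with $\psi$ (you call this naturality of $\eta$; the paper proves commutativity of the square $\psi^*\circ\epsilon^*_{12,3}=\epsilon^*_{12,3}\circ\psi^*$ of $P^\pt$-functors, using the same associativity axiom and the representability isomorphism $\lambda$), combine this with the cone-compatibility of $P^\pt$, and conclude by the five-lemma. The one small slip is that the compatibility of $\eta$ with the connecting morphism rests on the chain-level cone identification proved inside Proposition~\ref{cone}, which places $\Cone(\psi)$ in an $X_i$-slot, rather than on Lemma~\ref{A}, which concerns the target $Y$-slot.
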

\begin{proof}
It is enough to show that for objects $\mathcal{E},\mathcal{F},\mathcal{G}^1,\mathcal{G}^2$ in $\mathcal{C}^\pt$, and a morphism 
$\psi\in Z^0Hom_{\mathcal{C}^\pt}(\mathcal{E},\mathcal{F})$ if 
$\mathcal{E}\otimes\mathcal{G}^1\otimes\mathcal{G}^2\stackrel{\sim}{\ra}(\mathcal{E}\otimes\mathcal{G}^1)\otimes\mathcal{G}^2$ and
$\mathcal{F}\otimes\mathcal{G}^1\otimes\mathcal{G}^2\stackrel{\sim}{\ra}(\mathcal{F}\otimes\mathcal{G}^1)\otimes\mathcal{G}^2$ are isomorphisms, then so is
$Cone(\psi)\otimes\mathcal{G}^1\otimes\mathcal{G}^2{\ra}(Cone(\psi)\otimes\mathcal{G}^1)\otimes\mathcal{G}^2$.
To see this, we start by showing that the 
following diagram commutes
\[\begin{xy}\xymatrix{P^\pt_3(\{\mathcal{F},\mathcal{G}^1,\mathcal{G}^2\},\blank) \ar[r]^{\psi^*} & P^\pt_3(\{\mathcal{E},\mathcal{G}^1,\mathcal{G}^2\},\blank) \\
P^\pt_2(\{\mathcal{F}\otimes\mathcal{G}^1,\mathcal{G}^2\},\blank) \ar[u]^{\epsilon_{12,3}^*} \ar[r]^{\psi^*} & P^\pt_2(\{\mathcal{E}\otimes\mathcal{G}^1,\mathcal{G}^2\},\blank) \ar[u]^{\epsilon_{12,3}^*} }\end{xy} \]
For $f\in P_2(\{\mathcal{F}\otimes\mathcal{G}^1,\mathcal{G}^2\},\blank)$, we get 
${\epsilon_{12,3}^*}(f)=f(\tilde{id}_{\mathcal{F}\otimes\mathcal{G}^1},id_{\mathcal{G}^2})\in P_3(\{\mathcal{F},\mathcal{G}^1,\mathcal{G}^2\},\blank)$,
where $\tilde{id}_{\mathcal{F}\otimes\mathcal{G}^1}= \lambda^{-1}\mathcal{F}\otimes\mathcal{G}^1\in P_2(\{\mathcal{F},\mathcal{G}^1\},\mathcal{F}\otimes\mathcal{G}^1)$, $\lambda$ being the isomorphism giving the representation of the pseudo-tensor structure.
And $\psi^*({\epsilon_{12,3}^*}(f))=(f(\tilde{id}_{\mathcal{F}\otimes\mathcal{G}^1},id_{\mathcal{G}^2}))(\psi,id_{\mathcal{G}^1},id_{\mathcal{G}^2})$
which  equals $f(\tilde{id}_{\mathcal{F}\otimes\mathcal{G}^1}(\psi,id_{\mathcal{G}^1}),id_{\mathcal{G}^2})$ by the associativity of composition of the pseudo-tensor structure. Going the other way, $\psi^*(f)=f(\psi\otimes id_{\mathcal{G}^1},id_{\mathcal{G}^2})$. Then 
$${\epsilon_{12,3}^*}(\psi^*(f))=(f(\psi\otimes id_{\mathcal{G}^1},id_{\mathcal{G}^2}))(\tilde{id}_{\mathcal{E}\otimes\mathcal{G}^1},id_{\mathcal{G}^2})
=f((\psi\otimes id_{\mathcal{G}^1})_*(\tilde{id}_{\mathcal{E}\otimes\mathcal{G}^1}),id_{\mathcal{G}^2}).$$
But note that in $P_2(\{\mathcal{E},\mathcal{G}^1\},\mathcal{F}\otimes\mathcal{G}^1)$, 
\[ (\psi\otimes id_{\mathcal{G}^1})_*(\tilde{id}_{\mathcal{E}\otimes\mathcal{G}^1})=\tilde{id}_{\mathcal{F}\otimes\mathcal{G}^1}(\psi,id_{\mathcal{G}^1}) \]
since $\lambda((\psi\otimes id_{\mathcal{G}^1})_*(\tilde{id}_{\mathcal{E}\otimes\mathcal{G}^1}))
=(\psi\otimes id_{\mathcal{G}^1})_*\lambda(\tilde{id}_{\mathcal{E}\otimes\mathcal{G}^1})=\psi\otimes id_{\mathcal{G}^1}
= \lambda(\tilde{id}_{\mathcal{F}\otimes\mathcal{G}^1}(\psi,id_{\mathcal{G}^1}))$ 
and $\lambda$ is an isomorphism.
This shows that the above diagram commutes. Taking the cone of $\psi^*$ and using Lemma \ref{cone} and the 5-Lemma gives an isomorphism 
\[ P^\pt_2(\{Cone(\psi)\otimes\mathcal{G}^1,\mathcal{G}^2\},\blank)\stackrel{\sim}{\longrightarrow}P^\pt_3(\{Cone(\psi),\mathcal{G}^1,\mathcal{G}^2\},\blank) \]
which shows that $Cone(\psi)\otimes\mathcal{G}^1\otimes\mathcal{G}^2\stackrel{\sim}{\ra}(Cone(\psi)\otimes\mathcal{G}^1)\otimes\mathcal{G}^2$.

The same argument shows the general case. 
\end{proof}

\begin{thm} \label{tens} If a pseudo-tensor structure $P^\mathcal{C}_m$ on $\mc{C}$ is representable and gives a tensor structure on $H^0\mathcal{C}$, then the induced pseudo-tensor structure on $\mc{C}^\pt$ is representable and gives a tensor structure   on $K^b_{dg}(\mathcal{C})$. In addition, this tensor structure makes the triangulated category $K^b_{dg}(\mathcal{C})$ a tensor triangulated category.
\end{thm}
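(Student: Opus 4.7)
The plan is to combine the representability result just established with Lemma \ref{epi}: it suffices to prove that each structural morphism $\epsilon_\pi$ is an isomorphism in $K^b_{dg}(\mathcal{C}) = H^0\mathcal{C}^\pt$, for every surjection $\pi: J \twoheadrightarrow I$ and every choice of objects $\{\mathcal{E}^j\}_{j \in J}$ in $\mathcal{C}^\pt$. The tensor triangulated refinement will then follow from Proposition \ref{cone}, which identifies the representing object of $P_{Cone(\psi),\{\mathcal{G}^k\}}$ with the cone of the induced morphism of representing objects; this says precisely that the tensor functors preserve cone sequences, hence distinguished triangles, so the bifunctor $\otimes$ is exact in each variable on $H^0\mathcal{C}^\pt$.

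For the base case of the representability half, take each $\mathcal{E}^j = i(X_j)$ with $X_j \in \mathcal{C}$. By the proof of the preceding representability proposition, the representing object $\otimes_j i(X_j)$ may be chosen to be $i(\otimes_j X_j)$, with $\lambda$ induced from the representing isomorphism already given in $\mathcal{C}$. Under this identification $\epsilon_\pi$ in $\mathcal{C}^\pt$ is the image under $i$ of the corresponding $\epsilon_\pi$ in $\mathcal{C}$. Since we have assumed that the pseudo-tensor structure on $\mathcal{C}$ gives a tensor structure on $H^0\mathcal{C}$, Lemma \ref{epi} applied to $\mathcal{C}$ itself guarantees that this map is an isomorphism in $H^0\mathcal{C}$, and hence in $H^0\mathcal{C}^\pt$.

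For the inductive step, I use the fact that $\mathcal{C}^\pt$ is generated from $i(\mathcal{C})$ by translations and cones. Working one source slot at a time, I verify that the class of $\mathcal{E}^j \in \mathcal{C}^\pt$ making $\epsilon_\pi$ an isomorphism (with the other slots held fixed) is closed under $[1]$ and under taking cones. Translation invariance follows from the natural identification $P^\pt_m(\{\ldots,\mathcal{E}^j[l],\ldots\},\blank) = P^\pt_m(\{\ldots,\mathcal{E}^j,\ldots\},\blank)[-l]$ noted in the representability proof, which shifts both sides of $\epsilon_\pi$ by the same amount. Closure under cones is the content of the preceding cone lemma: its proof, via Proposition \ref{cone} and a 5-lemma on the ladder built from the cone sequences of the representing objects at both bracketings, applies verbatim to any $\epsilon_\pi$, not merely the particular binary grouping stated there. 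Iterating over all $m$ source slots reduces the general statement to the base case. The main obstacle is organizing this multi-variable induction cleanly and, at each cone step, lifting the representing isomorphisms from $D(\mathcal{C}^{\pt\, op})$ to honest morphisms of representing objects in $\mathcal{C}^\pt$ so that Proposition \ref{cone} is applicable; but both points are routine given the tools developed in the preceding propositions.
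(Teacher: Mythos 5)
Your proposal follows essentially the same line as the paper: reduce via Lemma \ref{epi} to showing the $\epsilon_\pi$ are isomorphisms, establish the base case on $i(\mathcal{C})$ using the assumed tensor structure on $H^0\mathcal{C}$, propagate by induction on translations and cones using the cone lemma, and derive the tensor triangulated compatibility from Proposition \ref{cone}. The one small organizational difference is that the paper explicitly invokes Lemma \ref{commut} to permute slots before applying the cone lemma (which is stated with the cone in the first slot), whereas you assert that the cone lemma's proof works verbatim for any slot and any bracketing; both readings are fine and lead to the same argument.
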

\begin{proof}
In view of the last proposition and Lemma \ref{epi}, we need to show that the morphisms
\[ \epsilon_\pi : \bigotimes_{j\in J}\mathcal{E}^j\longrightarrow \bigotimes_{i\in I}(\bigotimes_{j\in J_i}\mathcal{E}^j) \]
given by the representable pseudo-tensor structure $P^\pt$, are isomorphisms for all surjective maps $\pi:J\twoheadrightarrow I$.
We prove this by induction. Firstly, we note that,
\[ \epsilon_\pi : \bigotimes_{j\in J}i(X_j)\longrightarrow \bigotimes_{i\in I}(\bigotimes_{j\in J_i}i(X_j)) \]
is an isomorphism in the homotopy category $K^b_{dg}(\mathcal{C})$.

Lemma \ref{commut} along with the representability gives isomorphisms 
$\mathcal{E}^1\otimes\cdots\otimes\mathcal{E}^m\simeq\mathcal{E}^{\sigma(1)}\otimes\cdots\otimes\mathcal{E}^{\sigma(m)}$.
Since the category $C_{dg}^b(\mathcal{C}^{op})$ is generated by $i(\mathcal{C})$ by taking translations and cones, it follows using the above Lemma and 
applying induction on number of cones taken, that all the $\epsilon_\pi$'s are isomorphisms in $K^b_{dg}(\mathcal{C}^{op})$. Thus, by Lemma \ref{epi}, $P^\pt$ induces a tensor structure on $K^b_{dg}(\mathcal{C})$.

To show that this makes $K^b_{dg}(\mathcal{C})$ a tensor triangulated category, we need to show that, if 
\[
A\to B\to C\to A[1]
\]
is a distinguished triangle in $K^b_{dg}(\mathcal{C})$ and $X$ is in $K^b_{dg}(\mathcal{C})$, then
\[
A\otimes X\to B\otimes X\to C\otimes X\to A\otimes X[1]
\]
is a distinguished triangle.  This  follows directly from the compatibility of the pseudo-tensor structure on $\mc{C}^\pt$ with cones, as given in proposition~\ref{cone}, since the distinguished triangles in $K^b_{dg}(\mathcal{C})=H^0\mc{C}^\pt$ are those triangles isomorphic to the image of a cone sequence in $\mc{C}^\pt$.
\end{proof}

\phantomsection
\section{Pseudo-tensor structure in the category $\dg\mathcal{C}$}
Let $\square^*\,:\,\mathbf{Cube}\ra\mathcal{C}$ be a co-cubical object in a tensor category $(\mathcal{C},\otimes)$ with co-multiplication $\delta^*$. Then assigning 
$(X,Y,n)\mapsto \mathcal{H}om_\mathcal{C}(X\otimes\square^n,Y)$ defines a DG category 
$dg\mathcal{C}:=(\mathcal{C},\otimes,\square^*,\delta^*)$.
We have as well the DG category $\dg\mathcal{C}$ with Hom-complexes 
\[
Hom_{\dg\mathcal{C}}(X,Y)^*:=\ETot Hom_{\mc{C}}(X\otimes\square^*,Y\otimes\square^*)
\]
and DG functor \eqref{eqn:DGCompFun}   $F:dg\mathcal{C}\to \dg\mathcal{C}$.

In this section, we will define a pseudo-tensor structure on $\dg\mathcal{C}$. Under suitable hypotheses on $\square^*$, we show that this pseudo-tensor structure is representable and defines a tensor structure on $H^0\mc{C}$. Finally, using our results from \S\ref{sec:Pseudo-tensorPT} and \S\ref{subsec:ExtDGCat}, we show that we get the structure of a tensor triangulated category on $K^b_{dg}(\dg\mathcal{C})$ and 
$K^b_{dg}(dg\mathcal{C})$.

\begin{dfn}  Let $I=\{1,\ldots, k\}$, and let $X_1,\ldots, X_k, Y$ be in $\mc{C}$. \\
\\
1. Let $P^\mathcal{C}_I(\{X_i\}_{i\in I},Y)^{(*_1,\ldots, *_k), n}$ be $k$-dimensional non-degenerate complex associated to the $k$-cubical object
\[
(a_1,\ldots, a_k)\mapsto Hom_{\mc{C}}(X_1\otimes\ldots\otimes X_k\otimes\square^{a_1}\otimes\ldots\otimes\square^{a_k}, Y\otimes\square^n).
\]
2. Let  $P^\mathcal{C}_I(\{X_i\}_{i\in I},Y)^*$ be the extended total complex associated to the $n+1$ dimensional complex $P^\mathcal{C}_I(\{X_i\}_{i\in I},Y)^{(*_1,\ldots, *_k), *}$, that is
\[
P^\mathcal{C}_I(\{X_i\}_{i\in I},Y)^m:=\prod_{\substack{(a_1,\ldots, a_k),n\\n-\sum_ia_i=m}}
P^\mathcal{C}_I(\{X_i\}_{i\in I},Y)^{(a_1,\ldots, a_k), n}.
\]
\end{dfn}

\begin{rem}\label{rem:PSMultiCubeIdent} Referring to the extended multi-cubical complexes defined in \S\ref{subsec:ExtMultiCube}, we have the identity
\[
P^\mathcal{C}_I(\{X_i\}_{i\in I},Y)^*=Hom_{\mc{C}}(X_1\otimes\ldots\otimes X_k,Y)_{k,\ext}^*.
\]
\end{rem}

We now proceed to define the composition law for this pseudo-tensor structure. Let  $\pi:J=\{1,\ldots,p\}\ra\{1,\ldots,l\}=I$ be a surjection, let $X_1,\ldots, X_p, Y_1,\ldots, Y_l, Z$ be objects of $\mc{C}$, and let $J_i:=\pi^{-1}(i)$, $p_i:=|J_i|$, $i=1,\ldots, l$. Write
\[
J_i:=\{j^i_{1},\ldots, j^i_{p_i}\};\quad j^i_{1}<\ldots< j^i_{p_i},
\]
and take $f\in P^\mathcal{C}_I(\{Y_i\}_{i\in I},Z)^{(a_1,\ldots, a_l), n}$,
$g_i\in P^\mathcal{C}_{J_i}(\{X_{j^i_k}\},Y_i)^{(b_{j^i_1},\ldots, b_{j^i_{p_i}}), a_i}$, $i=1,\ldots, l$. 
Let $\beta: J_1\amalg\ldots\amalg J_l\to \{1,\ldots, p\}$ be the bijection sending $i$ to $i$.

Let $X^{J_i}:=X_{j^i_1}\otimes\ldots, X_{j^i_{p_i}}$ and let $\square^{b(J_i)}:=\square^{b_{j^i_1}}\otimes\ldots, \otimes\square^{b_{j^i_{p_i}}}$
\[
\tau_1:X^{J_1}\otimes\square^{b(J_1)}\otimes \ldots X^{J_l}\otimes \square^{b(J_l)}\to
X^{J_1}\otimes\ldots\otimes X^{J_l}\otimes\square^{b(J_1)}\otimes\ldots\otimes \square^{b(J_l)}
\]
be the symmetry isomorphism permuting the factors $\square^{b(J_i)}$ and $X^{J_i}$, let
\[
\tau_2:X^{J_1}\otimes\ldots\otimes X^{J_l}\to X_1\otimes\ldots\otimes X_p
\]
be the symmetry isomorphism induced by the bijection $\beta$ and let
\[
\tau_3:\square^{b(J_1)}\otimes\ldots\otimes \square^{b(J_l)}\to
\square^{b_1}\otimes\ldots\otimes\square^{b_p}
\]
be the signed symmetry isomorphism induced by $\beta$, where we give $\square^b$ degree $b$. Similarly, we have the symmetry isomorphism (without sign)
\[
\tau_4:Y_1\otimes\square^{a_1}\otimes \ldots\otimes Y_l\otimes\square^{a_l}\to
Y_1\otimes \ldots\otimes Y_l\otimes \square^{a_1}\otimes\ldots\otimes\square^{a_l}
\]

We define $f\circ^\pi(g_1\otimes\ldots\otimes g_l)$ as the composition in the following diagram:
\[\begin{xy}
\xymatrix{
X_1\otimes\cdots\otimes X_p\otimes \square^{b_1}\otimes\cdots\otimes\square^{b_p}
 \ar[d]^{\tau_1^{-1}\circ (\tau^{-1}_2\otimes\tau_3^{-1})}\\ 
X^{J_1}\otimes\square^{b(J_1)}\otimes \ldots\otimes  X^{J_l}\otimes \square^{b(J_l)}
 \ar[d]^{g_1\otimes\ldots \otimes g_l}\\ 
Y_1\otimes\square^{a_1}\otimes \ldots\otimes Y_l\otimes\square^{a_l}\ar[d]^{\tau_4}\\
Y_1\otimes \ldots\otimes Y_l\otimes \square^{a_1}\otimes\ldots\otimes\square^{a_l}\ar[d]^f\\
 Z\otimes \square^n}
\end{xy}
\]

Finally, we define the composition law
\begin{equation}\label{eqn:DGCompLaw}
\circ^\pi:P^\mathcal{C}_I(\{Y_i\}_{i\in I},Z)^*\bigotimes[\otimes_{i=1}^l
P^\mathcal{C}_{J_i}(\{X_{j^i_k}\},Y_i)^*]\to
P^\mathcal{C}_J(\{X_j\}_{j\in J},Z)^*
\end{equation}
by associating to a tensor product  of sequences $(f^n_{a_1,\ldots, a_l})\bigotimes[\otimes_{i=1}^l(g_{b^i_1,\ldots, b^i_{p_i}}^{a_i'})]$, 
\begin{align*}
&f^n_{a_1,\ldots, a_l}\in P^\mathcal{C}_I(\{Y_i\}_{i\in I},Z)^{(a_1,\ldots, a_l), n};\ n-\sum_ia_i=r\\
&g_{b^i_1,\ldots, b^i_{p_i}}^{a_i'}\in  P^\mathcal{C}_{J_i}(\{X_{j^i_k}\},Y_i)^{(b_{j^i_1},\ldots, b_{j^i_{p_i}}), a'_i};\ a'_i-\sum_jb^i_j=s_i
\end{align*}
 the sequence $(f^n_{a_1,\ldots, a_l}\circ^\pi(\otimes_{i=1}^lg_{b^i_1,\ldots, b^i_{p_i}}^{a_i}))$

We record the following result without proof; the proof is a straightforward verification.

\begin{prop} The maps \eqref{eqn:DGCompLaw} are maps of complexes and satisfy the associativity property required by a pseudo-tensor structure.
\end{prop}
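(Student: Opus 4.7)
The plan is to verify the two assertions in turn, each reducing to structural properties of $\mathcal{C}$ together with careful sign bookkeeping from the symmetry isomorphisms $\tau_1,\ldots,\tau_4$ appearing in the definition of $\circ^\pi$.

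For the assertion that $\circ^\pi$ is a map of complexes, I would begin by invoking Remark~\ref{rem:PSMultiCubeIdent} to identify
\[
P^\mathcal{C}_I(\{X_i\},Y)^* \;=\; Hom_{\mc{C}}(X_1\otimes\cdots\otimes X_k,Y)_{k,\ext}^*,
\]
so that the differential splits, according to \S\ref{subsec:ExtMultiCube}, into a source differential $d_X$ coming from the face maps on the cubes $\square^{b_j}$ and a target differential $d_Y$ coming from the face maps on the cube $\square^n$. When one applies the full differential to the composite $f\circ^\pi(g_1\otimes\cdots\otimes g_l)$, three kinds of boundary terms appear: source-cube boundaries of the $\square^{b_j}$, target-cube boundaries of $\square^n$, and ``internal'' boundaries of the intermediate cubes $\square^{a_i}$. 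The first two distribute, via the bi-naturality of $\tau_1$ and $\tau_4$ with respect to morphisms in $\mathbf{Cube}$, to give $(\partial f)\circ^\pi(g_1\otimes\cdots\otimes g_l)$ and $\sum_i (-1)^{\epsilon_i}\, f\circ^\pi(g_1,\ldots,\partial g_i,\ldots,g_l)$ with the appropriate Koszul signs $\epsilon_i$, thereby producing the Leibniz rule. The internal terms must cancel: the target-cube boundary of $g_i$ (contributed through the factor $\tau_4$ followed by $f$) and the source-cube boundary of $f$ in the $i$-th slot (contributed through $\tau_1^{-1}\circ(\tau_2^{-1}\otimes\tau_3^{-1})$) produce identical compositions in $\mathcal{C}$ with opposite signs, exactly the sign convention built into the extended total complex in \S\ref{subsec:ExtMultiCube}.

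For associativity, given surjections $K\twoheadrightarrow J\twoheadrightarrow I$ with corresponding elements $f$, $\{g_i\}$, $\{h_j\}$ of the appropriate bi-degrees, I would expand both iterated compositions as explicit chains of morphisms in $\mathcal{C}$ interleaved with the symmetry isomorphisms $\tau_1,\ldots,\tau_4$. Both are built from the same underlying triple composition---tensor products of the $h_j$'s, then of the $g_i$'s, then $f$---with only the order and grouping of the intervening symmetries differing. The coherence theorem for the symmetric monoidal structure on $\mathcal{C}$ reduces the comparison to the combinatorial assertion that the two resulting shuffles of the cube factors $\square^{b_j}$, $\square^{a_i}$ and $\square^n$ agree up to the Koszul sign, and this follows by a direct computation analogous to the one already carried out for Pre-Tr$(\mathcal{C})$ in \S\ref{sec:PSDGCat}.

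The main obstacle will be the sign bookkeeping, particularly in the associativity step where three families of cubes (the source cubes on the $X$'s, the intermediate cubes on the $Y$'s, and the intermediate cubes that arise in the second composition) must all be shuffled compatibly on the two sides of the desired equality. I would handle this by treating the signed permutation $\tau_3$ as a graded shuffle and reducing the required identity to a product formula for shuffle signs; the explicit sign computations already carried out for Pre-Tr$(\mathcal{C})$ provide a template for managing the resulting bookkeeping.
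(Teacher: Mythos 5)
The paper records this proposition without proof, labelling it a ``straightforward verification,'' so there is no in-text argument to compare against. Your plan, however, is a correct and complete blueprint for that verification, and it identifies exactly the structural points on which the check turns.

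For the chain-map property, the decomposition you describe is the right one: the differential on $P^{\mc{C}}_J(\{X_j\},Z)^*\cong Hom_{\mc{C}}(X_1\otimes\cdots\otimes X_p,Z)^*_{p,\ext}$ splits as $f\mapsto f\circ d_X - (-1)^{\deg f} d_Z\circ f$, the $d_X$ part (source-cube faces on the $\square^{b_j}$) passes through $\tau_1,\tau_2,\tau_3$ by naturality of those symmetries with respect to morphisms in $\Cube$, and the $d_Z$ part (target-cube faces on $\square^n$) sits after $f$ and does not interact with anything else. The crucial observation, which you state correctly, is that the intermediate cubes $\square^{a_i}$ appear both as the target-cube part of $\del g_i$ and as the source-cube part of $\del f$, and the relative sign $-(-1)^{\deg}$ baked into the extended-total-complex differential of \S\ref{subsec:ExtMultiCube} is precisely what makes these contributions cancel. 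This is the same mechanism that makes composition a chain map in $\dg\mc{C}$ itself, so nothing new has to be invented. For associativity, reducing the comparison to coherence of the symmetric monoidal structure plus a Koszul-sign computation for the signed shuffle $\tau_3$ is the right move; one should just note that since $\tau_3$ carries a grading not intrinsic to $\mc{C}$, coherence alone gives the unsigned equality and the shuffle-sign identity must be verified separately, as you in fact propose to do. The argument is sound; it simply fills in the verification the paper leaves to the reader.
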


\begin{thm}\label{thm:main1} Suppose that the extended co-cubical object $\square^*$ admits a bi-multiplication and is homotopy invariant. Then the  pseudo-tensor structure on $\dg\mathcal{C}$ is representable and induces a tensor structure on $H^0\dg\mathcal{C}$
\end{thm}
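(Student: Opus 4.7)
The strategy is to combine Remark \ref{rem:PSMultiCubeIdent} with Proposition \ref{prop:ExtCubeQIso2} for representability, and then apply Lemma \ref{epi} to upgrade this to a tensor structure. By Remark \ref{rem:PSMultiCubeIdent}, for $I=\{1,\ldots,k\}$ the complex defining the pseudo-tensor structure is
\[
P^\mathcal{C}_I(\{X_i\}_{i\in I},Y)^* = Hom_\mathcal{C}(X_1\otimes\cdots\otimes X_k, Y)_{k,\ext}^*,
\]
while by definition of $\dg\mc{C}$ we have
$Hom_{\dg\mathcal{C}}(X_1\otimes\cdots\otimes X_k, Y)^* = Hom_\mathcal{C}(X_1\otimes\cdots\otimes X_k, Y)_{1,\ext}^*$. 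Under the hypotheses of the theorem, Proposition \ref{prop:ExtCubeQIso2} (applied with $k'=1$) supplies a quasi-isomorphism
\[
\lambda_{1,k,\ext}:Hom_{\dg\mathcal{C}}(X_1\otimes\cdots\otimes X_k, Y)^* \longrightarrow P^\mathcal{C}_I(\{X_i\}_{i\in I},Y)^*,
\]
and inverting this in $D(\dg\mathcal{C}^{op})$ is the candidate representability isomorphism $\lambda_{\{X_i\}}$, with representing object $\otimes_{i\in I}X_i$ taken to be the tensor product $X_1\otimes\cdots\otimes X_k$ already present in $\mathcal{C}$. For $|I|=1$ the map $\lambda_{1,1,\ext}$ is the identity, so the normalization condition in the definition of representability is automatic.

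First I would verify that the map $\lambda_{1,k,\ext}$ is natural in $Y$ viewed as an object of $\dg\mathcal{C}$, so that it assembles into a morphism of $\dg\mathcal{C}^{op}$-modules. This is a direct check: the inclusion $Hom_\mathcal{C}(X,Y\otimes\square^m)_1^n \hookrightarrow Hom_\mathcal{C}(X,Y\otimes\square^m)_k^{n,0,\ldots,0}$ only adds trivial cube factors in the source, and the composition law in $\dg\mathcal{C}$ operates on the target cube factors attached to $Y$, so the two operations do not interfere. Once this naturality is established, Proposition \ref{prop:ExtCubeQIso2} promotes $\lambda_{1,k,\ext}$ to a quasi-isomorphism in $C(\dg\mathcal{C}^{op})$, and hence $\lambda_{\{X_i\}}$ is an isomorphism in $D(\dg\mathcal{C}^{op})$, yielding representability of the pseudo-tensor structure.

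To conclude that the induced pseudo-tensor structure on $H^0\dg\mathcal{C}$ is a tensor structure, I invoke Lemma \ref{epi}: it suffices to show that for every surjection $\pi\colon J\twoheadrightarrow I$, the morphism
\[
\epsilon_\pi\colon \bigotimes_{j\in J}X_j \longrightarrow \bigotimes_{i\in I}\Bigl(\bigotimes_{j\in J_i}X_j\Bigr)
\]
is an isomorphism in $H^0\dg\mathcal{C}$. Both source and target of $\epsilon_\pi$ are the \emph{same} tensor product $X_1\otimes\cdots\otimes X_p$ in $\mathcal{C}$ up to the strict associativity isomorphism of the tensor category $\mathcal{C}$; unravelling the definition of $\epsilon_\pi$ through $\lambda$, one sees that $\epsilon_\pi$ is represented by this associativity isomorphism, viewed inside $\dg\mathcal{C}$ via the image of a degree-$0$ cycle in $Z^0\dg\mathcal{C}$. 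It is therefore already invertible at the level of $Z^0\dg\mathcal{C}$, hence in $H^0\dg\mathcal{C}$.

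The main technical obstacle is the bookkeeping required to show that $\lambda_{\{X_i\}}$ is compatible with the pseudo-tensor composition law \eqref{eqn:DGCompLaw}. Specifically, one must track how the symmetry isomorphisms $\tau_1,\tau_2,\tau_3,\tau_4$ and the bi-multiplication $(\mu_{**},\delta^*)$ interact with the inclusion of extended multi-cubical complexes; the bi-multiplication hypothesis is used precisely to make these symmetries coherent, and homotopy invariance is what forces $\lambda_{1,k,\ext}$ to be a quasi-isomorphism rather than a mere embedding. A careful but routine verification, parallel to the proof of Proposition \ref{prop:ExtCubeQIso2}, then shows that $\epsilon_\pi$ indeed reduces to the associativity isomorphism as claimed, completing the argument.
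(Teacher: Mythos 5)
Your proposal follows essentially the same route as the paper's proof: identify $P^\mathcal{C}_I(\{X_i\},Y)^*$ with $Hom_{\mc{C}}(\otimes_i X_i,Y)_{k,\ext}^*$ via Remark~\ref{rem:PSMultiCubeIdent}, invoke Proposition~\ref{prop:ExtCubeQIso2} to get the quasi-isomorphism $\lambda_{1,k,\ext}$ establishing representability with representing object $\otimes_i X_i$, and then appeal to Lemma~\ref{epi} by observing that $\epsilon_\pi$ is realized by a structural isomorphism of the tensor category $\mathcal{C}$ (the paper says ``commutativity constraint,'' you say ``associativity''; both refer to the $\tau$-type symmetries appearing in the composition law and the point is the same). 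The extra naturality and normalization checks you flag are implicit in the paper but correct.
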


\begin{proof} From remark~\ref{rem:PSMultiCubeIdent}, we have the identity
\[
P^\mathcal{C}_I(\{X_i\}_{i\in I},Y)^*=Hom_{\mc{C}}(X_1\otimes\ldots\otimes X_k,Y)_{k,\ext}^*,
\]

By proposition~\ref{prop:ExtCubeQIso2} we have the natural quasi-isomorphism
\[
\lambda_{k,1,\ext}:Hom_{\mc{C}}(X_1\otimes\ldots\otimes X_k,Y)_{1,\ext}^*\to
Hom_{\mc{C}}(X_1\otimes\ldots\otimes X_k,Y)_{k,\ext}^*;
\]
by definition, we have
\[
Hom_{\mc{C}}(X_1\otimes\ldots\otimes X_k,Y)_{1,\ext}^*=Hom_{\dg\mc{C}}(X_1\otimes\ldots\otimes X_k,Y)^*,
\]
thus, $P^\mathcal{C}_I(\{X_i\}_{i\in I},\blank)^*$ is represented by $X_1\otimes\ldots\otimes X_k$. 

The assertion that the induced representable pseudo-tensor structure on  $H^0\dg\mathcal{C}$
is  a tensor structure, i.e., that for each surjection $\pi:J\twoheadrightarrow I$, the induced map
\[ 
\epsilon_\pi:\otimes_{j\in J}X_j\longrightarrow \otimes_{i\in I}(\otimes_{j\in J_i}X_j) 
\]
is an isomorphism in $H^0\dg\mathcal{C}$. For this, it follows immediately from the definition of the quasi-isomorphism $\lambda_{k,1,\ext}$ and the composition law for the pseudo-tensor structure that the map $\epsilon_\pi$ is just the evident commutativity constraint in the tensor category $\mathcal{C}$, and is thus an isomorphism.
\end{proof}

By the results of \S\ref{sec:PSDGCat}, the pseudo-tensor structure $P^\mathcal{C}_I(\{X_i\}_{i\in I},\blank)^*$ we have defined on $\dg\mc{C}$ gives rise to a pseudo-tensor structure $P^{\mc{C}^\pt}_I(\{X_i\}_{i\in I},\blank)^*$ on 
$\mc{C}^\pt$. 

\begin{cor}\label{cor:mainPT} Suppose that the extended co-cubical object $\square^*$ admits a bi-multiplication and is homotopy invariant. Then the induced pseudo-tensor structure  $P^{\dg\mc{C}^\pt}_I(\{X_i\}_{i\in I},\blank)^*$ on 
$\dg\mc{C}^\pt$ is representable and gives rise to a tensor structure on the homotopy category $K^b(\dg\mathcal{C})$, making $K^b(\dg\mathcal{C})$ a tensor triangulated category.
\end{cor}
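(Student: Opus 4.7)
The plan is to deduce this corollary directly by combining Theorem~\ref{thm:main1} with Theorem~\ref{tens}, which between them have already done all the substantive work.

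First, I will invoke Theorem~\ref{thm:main1}: the hypotheses that $\square^*$ carries a bi-multiplication and is homotopy invariant guarantee that the pseudo-tensor structure $P^{\dg\mc{C}}_I$ on $\dg\mc{C}$ is representable and induces a genuine tensor structure on $H^0\dg\mc{C}$. This is precisely the input required by Theorem~\ref{tens}.

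Next, by applying Proposition~\ref{prop:PreTrPSDef} with the abstract $\mc{C}$ there replaced by $\dg\mc{C}$, the pseudo-tensor structure on $\dg\mc{C}$ lifts canonically to a pseudo-tensor structure on $\text{Pre-Tr}(\dg\mc{C})$; its restriction to the full DG subcategory $(\dg\mc{C})^\pt$ is, by construction, the structure $P^{\dg\mc{C}^\pt}_I$ appearing in the statement. With the previous paragraph in hand, the hypotheses of Theorem~\ref{tens} are satisfied for the DG category $\dg\mc{C}$, and that theorem then yields both representability of the pseudo-tensor structure on $(\dg\mc{C})^\pt$ and the resulting tensor structure on $K^b_{dg}(\dg\mc{C}) = H^0(\dg\mc{C})^\pt$. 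The compatibility of this tensor structure with the triangulation (so that $K^b_{dg}(\dg\mc{C})$ is a tensor triangulated category) is again part of the conclusion of Theorem~\ref{tens}, ultimately coming from the compatibility of the induced pseudo-tensor structure with cones recorded in Proposition~\ref{cone}.

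The only verification worth flagging is that the pseudo-tensor structure produced on $(\dg\mc{C})^\pt$ by Proposition~\ref{prop:PreTrPSDef} agrees with the structure $P^{\dg\mc{C}^\pt}_I$ named in the statement; this is immediate from the definitions. All remaining work—notably the inductive propagation of representability from $\dg\mc{C}$ through shifts and cones up to every object of $(\dg\mc{C})^\pt$, carried out using Lemma~\ref{A} for shifts and Proposition~\ref{cone} for cones—has already been done inside the proof of Theorem~\ref{tens}, so there is no residual obstacle.
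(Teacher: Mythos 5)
Your argument is exactly the paper's proof, simply spelled out in more detail: the paper also deduces the corollary by combining Theorem~\ref{thm:main1} (representability and tensor structure on $H^0\dg\mc{C}$) with Theorem~\ref{tens} (lifting to $K^b_{dg}(\dg\mc{C})$ as a tensor triangulated category). Your additional remarks about Proposition~\ref{prop:PreTrPSDef}, Lemma~\ref{A}, and Proposition~\ref{cone} correctly identify the machinery invoked inside the proof of Theorem~\ref{tens}, so there is no discrepancy.
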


\begin{proof} This follows from theorem~\ref{tens} and theorem~\ref{thm:main1}.
\end{proof}

\begin{rem}\label{rem:MainTens} Suppose that  co-cubical object $\square^*$ admits a bi-multiplication and is homotopy invariant. By proposition~\ref{prop:ExtCubeQIso}, the  functor \eqref{eqn:DGCompFun} $F:dg\mathcal{C}\to \dg\mathcal{C}$ is a quasi-equivalence. Thus, by proposition~\ref{prop:TriangQIso}, we have an equivalence of triangulated categories 
\[
K^b(F):K^b(dg\mathcal{C})\to K^b(\dg\mathcal{C}).
\]
Thus, the tensor structure on $ K^b(\dg\mathcal{C})$ gives rise to a tensor structure on $K^b(dg\mathcal{C})$, making $ K^b(dg\mathcal{C})$ a tensor triangulated category.
\end{rem}

\begin{dfn}\label{dfn:QTens} Let $\square^*$ be a homotopy-invariant extended co-cubical object with bi-multiplication in a tensor category $\mc{C}$ and $\mc{C}$ is $\Q$-linear. Note that the symmetric group $S_n$ acts on $Hom_\mc{C}(X\otimes\square^n,Y\otimes\square^*)$ through the permutation action on $\ub{n}$ (and hence $\square^n$). Let
\[ Hom_\mc{C}(X\otimes\square^n,Y)^\alt\subset Hom_\mc{C}(X\otimes\square^n,Y)\]
be the subcomplex on which $S_n$ acts by the sign representation. We define a pseudo-tensor structure on $\dg\mc{C}$ given by
\[ ^{\alt}P_I^\mc{C}(\{X_i\}_{i\in I},Y)^n:=Hom_\mc{C}(\otimes_{i\in I}X_i\otimes\square^{-n},Y)^\alt \]
Let $\alt:Hom_\mc{C}(\otimes_{i\in I}X_i\otimes\square^{n},Y)\to Hom_\mc{C}(\otimes_{i\in I}X_i\otimes\square^{n},Y)^\alt$ be the map
\[ \alt=\alt_n=\frac{1}{n!}\sum_{\rho\in S_n}(-1)^{sgn(\rho)}\rho_*.\] 
The composition law ${\circ}^\alt$ for the pseudo-tensor structure is the composition
\begin{multline*}{\circ}^\alt: Hom_\mc{C}(\big(\otimes_{i\in I}Y_i\big),Z)^{\alt p}_{1}\otimes\bigotimes_{i\in I} Hom_\mc{C}(\big(\otimes_{j\in J_i}X_j\big),Y_i)^{\alt q_i}_{1}\by{\tilde{\circ}}Hom_\mc{C}(\big(\otimes_{j\in J}X_j\big),Z)^{p+\sum q_i}_{1}\\ \by{\alt} Hom_\mc{C}(\big(\otimes_{j\in J}X_j\big),Z)^{\alt p+\sum q_i}_{1}\end{multline*}
where $\tilde{\circ}$ is 
defined as follows:
\[\begin{xy}
\xymatrix{
\bigotimes_{j\in J} X_j\otimes \square^{p+\sum_iq_i}
 \ar[d]^{id_{\otimes X_j}\otimes\delta_{q_1,\ldots,q_k,p}}\\ 
\bigotimes_{j\in J} X_j\otimes\square^{q_1}\otimes \ldots\otimes  \square^{q_k}\otimes \square^{p}
\ar[d]^\tau \\
\bigotimes_{i=1}^k\big(\otimes_{j\in J_i}X_j\otimes\square^{q_i}\big)\otimes\square^{p}
 \ar[d]^{g_1\otimes\ldots \otimes g_l\otimes id_{\square^{p}}}\\ 
Y_1\otimes \ldots\otimes Y_k\otimes \square^{p}\ar[d]^f\\
 Z }
\end{xy}
\]
\end{dfn}
It follows from \cite[Proposition~1.14]{smmot} that this is indeed a pseudo-tensor structure. In fact, this is the pseudo-tensor structure induced by the tensor structure on $(\mc{C},\otimes,\square^*,\delta)^\alt$ defined in \cite[\S~1.7]{smmot}.
\begin{prop}\label{prop:QTens} 1. 
The pseudo-tensor structure $^{\alt}P^\mc{C}_*$ is representable. \\

2. For $\mc{C}=Cor_{S\Q}$, $^{\alt}P^{Cor_{S\Q}}_*$ induces the same tensor structure on $H^0\dg{Cor_S}$ as $P^{Cor_{S\Q}}_*$.
\end{prop}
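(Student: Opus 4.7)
The plan for part (1) is as follows. By construction, $^{\alt}P_I^\mc{C}(\{X_i\}_{i\in I},Y)^n = Hom_\mc{C}(\otimes_{i\in I} X_i \otimes \square^{-n}, Y)^\alt$ coincides with the Hom-complex from $\otimes_{i\in I} X_i$ to $Y$ in Levine's alternating DG category $(\mc{C},\otimes,\square^*,\delta)^\alt$ of \cite[\S~1.7]{smmot}. Taking the representing object to be $\otimes_{i\in I} X_i$ (the same as for $P^\mc{C}_I$), the plan is to construct the isomorphism $^\alt\lambda$ in $D(\dg\mc{C}^{op})$ as the zigzag
\[
Hom_\mc{C}(\otimes_{i\in I} X_i \otimes \square^*, \blank)^\alt \xleftarrow{\alt} Hom_{dg\mc{C}}(\otimes_{i\in I} X_i, \blank)^* \xrightarrow{F} Hom_{\dg\mc{C}}(\otimes_{i\in I} X_i, \blank)^*,
\]
where the left-hand map is the alternation projector, which is a quasi-isomorphism in the $\Q$-linear setting by \cite[Proposition~1.14]{smmot}, and the right-hand map is the DG functor $F$ of \eqref{eqn:DGCompFun}, which is a quasi-equivalence by Proposition~\ref{prop:ExtCubeQIso} together with the homotopy-invariance and bi-multiplication of $\square_S^*$ established in Proposition~\ref{prop:CorS}. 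Passing to the derived category yields the desired isomorphism, establishing representability.

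For part (2), both $P^{Cor_{S\Q}}$ and $^{\alt}P^{Cor_{S\Q}}$ are representable pseudo-tensor structures with the same representing object $\otimes_{i\in I} X_i = X_1 \times_S \cdots \times_S X_k$. By the uniqueness observation following the definition of representability, the induced tensor structure on $H^0\dg Cor_S$ is determined up to canonical isomorphism by the coherence morphisms $\epsilon_\pi$ for surjections $\pi: J \twoheadrightarrow I$. In the proof of Theorem~\ref{thm:main1}, it is shown that for $P^{Cor_{S\Q}}$ the morphism $\epsilon_\pi$ is the evident associativity/commutativity constraint in the tensor category $Cor_{S\Q}$ coming from $\times_S$. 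The same argument applies to $^\alt P$: the composition law ${\circ}^\alt$ of Definition~\ref{dfn:QTens} differs from the plain composition $\tilde{\circ}$ only by post-composition with $\alt$, and the identity morphisms used in producing $\epsilon_\pi^\alt$ lie in cubical degree zero and are therefore already alternating. Thus $\alt$ acts trivially on them and $\epsilon_\pi^\alt$ also reduces to the constraint from $\times_S$. This gives the coincidence of the two induced tensor structures on $H^0\dg Cor_S$.

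The main obstacle is verifying the compatibility of the alternation projector $\alt$ with the composition law ${\circ}^\alt$ of $^\alt P$, so that the zigzag in part (1) intertwines the composition maps defining the two pseudo-tensor structures up to homotopy. This requires a careful sign analysis to align the signed symmetry isomorphism $\tau_3$ appearing in the composition law for the ordinary $P^\mc{C}$ with the alternation-induced signs in ${\circ}^\alt$, as well as with the Alexander--Whitney-style composition implicit in $Hom_{\dg\mc{C}}$. Once this compatibility is established, both the representability in part (1) and the equality of tensor structures in part (2) follow essentially formally.
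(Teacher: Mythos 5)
For part~(1) your zigzag
\[
Hom_\mc{C}(\otimes_{i\in I} X_i \otimes \square^*, \blank)^\alt \xleftarrow{\;\alt\;} Hom_{dg\mc{C}}(\otimes_{i\in I} X_i, \blank)^* \xrightarrow{\;F\;} Hom_{\dg\mc{C}}(\otimes_{i\in I} X_i, \blank)^*
\]
is essentially the paper's argument (the paper instead uses the inclusion $Hom^{\alt *}_1 \hookrightarrow Hom^*_1$, which is the same thing up to replacing a retraction by its section; it cites \cite[Prop.~1.7]{smmot} for this). So part~(1) is fine.

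The problem is with part~(2). You argue that, because both pseudo-tensor structures are representable with the same representing objects, and because the coherence morphisms $\epsilon_\pi$ coincide, the two induced tensor structures coincide. That inference is not justified. A tensor structure on $H^0\dg Cor_S$ is the bifunctor $\otimes$ on \emph{objects and morphisms} together with the constraints; the $\epsilon_\pi$ pin down only the constraints. The bifunctor on morphisms is obtained by transporting the functoriality of $P_I$ (resp.\ $^{\alt}P_I$) through the chosen $\lambda$'s, and since the two composition laws $\tilde\circ$ and $\circ^\alt$ differ and general morphisms of $H^0\dg Cor_S$ are \emph{not} concentrated in cubical degree zero, there is no reason for the two bifunctors to agree without further work. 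The remark you cite after the definition of representability only asserts uniqueness of the representing data for a single fixed pseudo-tensor structure; it says nothing about comparing two distinct pseudo-tensor structures.

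What the paper actually does for part~(2) is exactly the ``main obstacle'' you flag in your last paragraph: it constructs an explicit quasi-isomorphism
\[
P^{Cor_{S\Q}}_I(\{X_i\},Y)^* \xrightarrow{\;\iota\;} Hom_{Cor_{S\Q}}(\otimes X_i, Y)^*_1 \xrightarrow{\;\alt\;} {}^{\alt}P^{Cor_{S\Q}}_I(\{X_i\},Y)^*,
\]
where $\iota$ projects to the $m=0$ components and contracts with the co-multiplication $\delta_{a_1,\ldots,a_k}$, and then verifies directly that $\alt\circ\iota$ intertwines the two composition laws, i.e.\ $\alt\bigl(\iota(f(g_i))\bigr) = \alt(\iota(f))\circ^{\alt}\bigl(\alt(\iota(g_i))\bigr)$ (using that $\iota$ is a strict map of operads and that $\alt$ is idempotent). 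This compatibility with the full operad structure, not merely with the identities entering $\epsilon_\pi$, is what transports the whole tensor structure. You should drop the $\epsilon_\pi$-comparison and instead carry out this verification; once you have it, part~(2) follows, and note that part~(1) does \emph{not} require it (representability only asks for an isomorphism of functors in $D(\dg\mc{C}^{op})$, with no compatibility with the pseudo-tensor compositions).
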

\newcommand{\cq}{{Cor_{S\Q}}}
\begin{proof} (1): 
It follows from \cite[Proposition~1.7]{smmot} that the inclusion 
\[ Hom_\mc{C}(\otimes_{i\in I}X_i,Y)^{\alt *}_{1}\to Hom_\mc{C}(\otimes_{i\in I}X_i,Y)^{*}_{1} \]
is a quasi-isomorphism. Also, by Proposition~\ref{prop:ExtCubeQIso}, $F_{\otimes_{i\in I}X_i,Y}:Hom_\mc{C}(\otimes_{i\in I}X_i,Y)^{*}_{1}\to Hom_{\dg\mc{C}}(\otimes_{i\in I}X_i,Y)^{*}$ is a quasi-isomorphism. Thus, $^{\alt}P_I^\mc{C}(\{X_i\}_{i\in I},\blank)^*$ is representable by $\otimes_{i\in I}X_i$.\\

\noindent (2): 
The map $\alt: Hom_\mc{C}(\otimes_{i\in I}X_i,Y)^{*}_{1} \to Hom_\mc{C}(\otimes_{i\in I}X_i,Y)^{\alt *}_{1}$ is a quasi-isomorphism for $\mc{C}=Cor_{S\Q}$ (see \cite[Theorem~4.11]{highchowrev}).
We have the quasi-isomorphism \[P^\cq_I(\{X_i\},Y)^*\by{\iota}Hom_\cq(\otimes X_i,Y)^*_1\by{\alt}  Hom_\cq(\otimes X_i,Y)^{*\alt}_1=\; ^{\alt}\!\!P^\cq_I(\{X_i\},Y)^*\] given as 
\begin{eqnarray*} 
{\ds\prod_{m-\sum a_i=p}}Hom_\cq(\bigotimes_{i=1}^kX_i\otimes\square^{a_1}\otimes\cdots\otimes\square^{a_k},Y\otimes\square^m)&\by{\iota} &
Hom_\cq(\otimes X_i\otimes\square^{-p},Y)\\
f=(f^m) & \mapsto & {\ds\sum_{\sum a_i=-p}}f^0_{a_1,\ldots,a_k}\circ(id_{\otimes X_i}\otimes\delta_{a_1,\ldots,a_k})  
\end{eqnarray*} 
By Proposition~\ref{prop:ExtCubeQIso} and Proposition~\ref{prop:ExtCubeQIso2}, we get that $\iota$ is a quasi-isomorphisms. Also, it follows from the definition of the composition laws that $\iota(f(g_i))=\iota(f)\tilde{\circ}(\iota(g_i))$. Thus,
\begin{eqnarray*}
\alt\big(\iota(f(g_i))\big)&=& \alt\big(\iota(f)\tilde{\circ}(\iota(g_i))\big)\\
&=& \alt\big(\alt(\iota(f))\tilde{\circ}(\alt(\iota(g_i)))\big)\\
&=& \alt(\iota(f)){\circ}^\alt(\alt(\iota(g_i)))
\end{eqnarray*}
Hence the quasi-isomorphism $\alt\circ\iota$ is compatible with the composition laws of the two pseudo-tensor structures. 
\end{proof}

\cleardoublepage
\phantomsection
\chapter{Tensor Structure on Smooth Motives}
\phantomsection
\section{Main Theorem}
Let $S$ be a fixed regular scheme   of finite Krull dimension, giving us the DG categories $dgCor_S$ and $\dg Cor_S$.

\begin{thm}\label{thm:mainA} There is a pseudo-tensor structure on $\dg Cor_S^\pt$ inducing the structure of a tensor triangulated category on the equivalent triangulated categories $K^b_{dg}(\dg Cor_S)$ and 
$K^b_{dg}(dgCor_S)$.
\end{thm}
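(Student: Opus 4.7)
The plan is to assemble results already prepared in the previous chapters; the theorem is essentially a packaging statement, since all the substantive work has been done. Concretely, I would break the argument into three steps, corresponding to (i) verifying the hypotheses on the co-cubical object $\square^*_S$, (ii) applying the abstract pseudo-tensor machinery of Chapter~2, and (iii) transporting the resulting structure along a quasi-equivalence.

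First I would invoke Proposition~\ref{prop:CorS}: $Cor_S$ is a tensor category in which $\square^*_S$ is an extended co-cubical object, admitting the bi-multiplication $(\mu_{**/S},\delta^*)$ formed from the identity multiplication and the diagonal, and it is homotopy invariant. Hence the hypotheses required to apply the main abstract results of \S\ref{subsec:ExtDGCat} and \S\ref{sec:Pseudo-tensorPT} with $\mc{C}=Cor_S$ are in place.

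Second, I would apply Theorem~\ref{thm:main1} to $\mc{C}=Cor_S$ to obtain a representable pseudo-tensor structure on $\dg Cor_S$ which induces a tensor structure on $H^0\dg Cor_S$. Proposition~\ref{prop:PreTrPSDef} then promotes this to a pseudo-tensor structure on $\dg Cor_S^\pt$, and Corollary~\ref{cor:mainPT} (which rests on Theorem~\ref{tens} together with the cone-compatibility of Proposition~\ref{cone}) asserts that the induced pseudo-tensor structure on $\dg Cor_S^\pt$ is representable and equips the homotopy category $K^b_{dg}(\dg Cor_S)=H^0(\dg Cor_S^\pt)$ with a tensor triangulated structure. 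This yields the statement for $K^b_{dg}(\dg Cor_S)$.

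Third, to transfer the structure to $K^b_{dg}(dgCor_S)$ I would use Remark~\ref{rem:MainTens}: by Proposition~\ref{prop:ExtCubeQIso} the DG functor $F:dgCor_S\to\dg Cor_S$ of \eqref{eqn:DGCompFun} is a quasi-equivalence (this is exactly where homotopy invariance of $\square^*_S$ is used), so by Proposition~\ref{prop:TriangQIso} it induces an equivalence of triangulated categories $K^b_{dg}(F):K^b_{dg}(dgCor_S)\to K^b_{dg}(\dg Cor_S)$, across which the tensor triangulated structure transports. There is no genuine obstacle: the only real ``hard part'' is the bookkeeping needed to check that the hypotheses of Theorem~\ref{thm:main1}, Corollary~\ref{cor:mainPT} and Remark~\ref{rem:MainTens} are indeed available for $\mc{C}=Cor_S$, and Proposition~\ref{prop:CorS} records precisely what is needed.
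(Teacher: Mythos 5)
Your proposal is correct and follows essentially the same route as the paper: invoke Proposition~\ref{prop:CorS} to verify the hypotheses on $\square^*_S$, then apply Corollary~\ref{cor:mainPT} and Remark~\ref{rem:MainTens}. The only difference is that you unfold the intermediate results (Theorem~\ref{thm:main1}, Proposition~\ref{prop:PreTrPSDef}, Propositions~\ref{prop:ExtCubeQIso} and~\ref{prop:TriangQIso}) on which the corollary and remark rest, rather than citing them wholesale.
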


\begin{proof} By proposition~\ref{prop:CorS}, the co-cubical object $\square_S^*$ admits a bi-multiplication and is homotopy invariant. Thus we may apply corollary~\ref{cor:mainPT}  and remark~\ref{rem:MainTens} to complete the proof.
\end{proof}

\begin{cor} \label{cor:main} Suppose that the base scheme $S$ is a semi-local regular scheme over a field $k$ of characteristic zero. Then the tensor structure on $K^b_{dg}(dgCor_S)$ extends canonically to give the categories $SmMot^{\mathrm{eff}}_{gm}(S)$ and $SmMot_{gm}(S)$ the structure of tensor triangulated categories.
\end{cor}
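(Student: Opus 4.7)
The plan is to propagate the tensor triangulated structure of Theorem~\ref{thm:mainA} through the constructions that define $SmMot^{\mathrm{eff}}_{gm}(S)$. Since the tensor product on $Cor_S$ is induced by the fibre product $\times_S$, which preserves smoothness and projectivity over $S$, the pseudo-tensor structure on $\dg Cor_S^\pt$ supplied by Theorem~\ref{thm:mainA} restricts to a pseudo-tensor structure on the full DG subcategory $\dg PrCor_S^\pt$ with representability preserved (the tensor product of smooth projective $S$-schemes is again smooth projective). By Corollary~\ref{cor:mainPT} together with Remark~\ref{rem:MainTens}, this gives $K^b_{dg}(dgPrCor_S)$ the structure of a tensor triangulated category, compatible via the evident full embedding with the tensor structure on $K^b_{dg}(dgCor_S)$ of Theorem~\ref{thm:mainA}.

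The next step is to transfer this structure from $K^b_{dg}(dgPrCor_S) = K^b_{dg}(\Gamma(S,\ub{dgPrCor}_S))$ to $H^0 dgSmMot^{\mathrm{eff}}_S = K^b_{dg}(R\Gamma(S,\ub{dgPrCor}_S))$. This is exactly where the semi-local and essentially smooth hypothesis enters: Remark~\ref{rem:SLocal} yields a quasi-equivalence of DG categories
\[
\Gamma(S,\ub{dgPrCor}_S) = dgPrCor_S \longrightarrow R\Gamma(S,\ub{dgPrCor}_S),
\]
and by Proposition~\ref{prop:TriangQIso} this becomes an equivalence of triangulated categories $K^b_{dg}(dgPrCor_S) \iso K^b_{dg}(R\Gamma(S,\ub{dgPrCor}_S))$, along which we transport the tensor triangulated structure.

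Finally, the idempotent completion $SmMot^{\mathrm{eff}}_{gm}(S)$ inherits a tensor triangulated structure via the standard formula $(X,p) \otimes (Y,q) := (X \otimes Y,\ p \otimes q)$ on summands, and $SmMot_{gm}(S)$, obtained from $SmMot^{\mathrm{eff}}_{gm}(S)$ by formally inverting the endofunctor $\otimes\mathbb{L}$, is a symmetric stabilisation that preserves tensor triangulated structure. The main obstacle is verifying canonicity in step two: rather than relying on an abstract transport along an unspecified quasi-inverse, one would prefer to install a pseudo-tensor structure directly on $R\Gamma(S,\ub{dgPrCor}_S)$ by applying the Godement resolution and the signed Alexander--Whitney map \eqref{eqn:AWMap1} to the pseudo-tensor structure on each presheaf value $dgPrCor_U$, exactly in parallel with how the composition law on $R\Gamma$ is built in \S\ref{subsec:sheaf}. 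The semi-local hypothesis then makes the natural DG functor $\Gamma \to R\Gamma$ compatible with these pseudo-tensor structures by naturality of Godement, and Theorems~\ref{thm:main1} and~\ref{tens} apply to give the tensor triangulated structure on $H^0 dgSmMot^{\mathrm{eff}}_S$ in a manifestly canonical way, extending to the idempotent completion and to the $\mathbb{L}$-stabilisation as above.
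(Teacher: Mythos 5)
Your proof follows essentially the same route as the paper: restrict the pseudo-tensor structure from $\dg Cor_S^\pt$ to the full subcategory $\dg PrCor_S^\pt$ (using that $PrCor_S$ is a sub-tensor category), obtain a tensor triangulated structure on $K^b_{dg}(\dg PrCor_S) \simeq K^b_{dg}(dgPrCor_S)$, transport it along the quasi-equivalence $\Gamma(S,\ub{dgPrCor}_S)\to R\Gamma(S,\ub{dgPrCor}_S)$ furnished by Remark~\ref{rem:SLocal}, and then pass to the idempotent completion and the $\mathbb{L}$-stabilisation. Your spelling out of where the semi-local/essentially-smooth hypothesis enters --- namely via Remark~\ref{rem:SLocal} and Proposition~\ref{prop:TriangQIso} --- is exactly what the paper's phrase ``the pseudo-abelian hull $SmMot^{\mathrm{eff}}_{gm}(S)$ of $K^b_{dg}(\dg PrCor_S)$'' tacitly relies on, and making it explicit is a genuine improvement in exposition. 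One caveat about your final paragraph: the ``preferred'' alternative of installing a pseudo-tensor structure directly on $R\Gamma(S,\ub{dgPrCor}_S)$ by combining the Godement resolution with the signed Alexander--Whitney map \eqref{eqn:AWMap1} should not be presented as a routine fix. Extending a pseudo-tensor structure on a presheaf of DG categories to the Godement resolution is precisely what the paper singles out, in its closing ``Future directions'' discussion, as the open obstruction to treating an arbitrary regular base $S$; the paper deliberately sidesteps that construction in the semi-local case by transporting along the quasi-equivalence, which (being an equivalence of triangulated categories) determines the transferred tensor triangulated structure up to canonical equivalence, so no canonicity problem actually arises in step two.
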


\begin{proof} Recall that $PrCor_S$ is the full subcategory of $Cor_S$ with objects the smooth projective $S$-schemes, giving us the full DG subcategory $dgPrCor_S$ of $dgCor_S$ with objects in $PrCor_S$,  and the full DG subcategory $\dg PrCor_S$ of  $\dg Cor_S$ similarly defined.  Since $PrCor_S$ is a sub-tensor category, the pseudo-tensor structure we have defined on $\dg Cor_S$ restricts to a pseudo-tensor structure on $\dg PrCor_S$, giving us a pseudo-tensor structure on the full DG subcategory $\dg PrCor_S^\pt$ of $\dg Cor_S^\pt$. Thus, as the pseudo-tensor structure on $\dg Cor_S^\pt$ gives us by theorem~\ref{thm:mainA} the structure of a  tensor triangulated category on the homotopy category $K^b_{dg}(\dg Cor_S)$, the same holds for the full triangulated subcategory $K^b_{dg}(\dg PrCor_S)$ of $K^b_{dg}(\dg Cor_S)$.

This tensor triangulated structure passes to the pseudo-abelian hull $SmMot^{\mathrm{eff}}_{gm}(S)$  of\\ 
$K^b_{dg}(\dg PrCor_S)$ and, after inverting $\otimes \mathbb{L}$, on  $SmMot_{gm}(S)$. 
\end{proof}

\begin{rem}  Taking $S=\spec(k)$ in corollary~\ref{cor:main} gives us a tensor structure on Bondarko's category of motives.
\end{rem}

\begin{cor} Working with $\Q$-coefficients, 
the tensor structure we defined on $SmMot_{gm}^{\mathrm{eff}}(S)_\Q$ agrees with the one defined by Levine in \cite{smmot}.
\end{cor}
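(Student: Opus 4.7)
The plan is to reduce the comparison to Proposition~\ref{prop:QTens}, which already does most of the work at the level of $\dg Cor_{S\Q}$, and then to propagate the identification through the formal operations (pretriangulation, sheafification, pseudo-abelian hull, inverting $\mathbb{L}$) used to build $SmMot_{gm}^{\mathrm{eff}}(S)_\Q$.

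First I would recall Levine's construction in \cite{smmot}: with $\Q$-coefficients, his tensor structure on $SmMot_{gm}^{\mathrm{eff}}(S)_\Q$ is obtained from the alternating-cubes refinement $(Cor_{S\Q}, \otimes, \square^*, \delta)^{\alt}$ of the cubical DG category, which makes the Hom-complexes $Hom_{Cor_{S\Q}}(X\otimes\square^*,Y)^{\alt}$ into a tensor DG category. Pulling this back through $F: dgCor_{S\Q}\to\dg Cor_{S\Q}$ and recording the resulting ``operadic'' data yields exactly the pseudo-tensor structure $^{\alt}P_*^{\cq}$ of Definition~\ref{dfn:QTens}. Thus Levine's tensor structure on $SmMot_{gm}^{\mathrm{eff}}(S)_\Q$ is the tensor structure induced by $^{\alt}P_*^{\cq}$ via Theorem~\ref{tens} (applied to $\dg Cor_{S\Q}^\pt$), followed by restriction to smooth projective schemes, sheafification, pseudo-abelian completion, and inverting $\otimes\mathbb{L}$.

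Second, I would apply Proposition~\ref{prop:QTens}. Part (1) says $^{\alt}P_*^{\cq}$ is representable by $\otimes_{i\in I}X_i$, the \emph{same} representing object used by $P_*^{\cq}$. Part (2), together with the verification in its proof that the natural quasi-isomorphism $\alt\circ \iota : P_I^{\cq}(\{X_i\},Y)^*\to {}^{\alt}P_I^{\cq}(\{X_i\},Y)^*$ intertwines the two composition laws, exhibits a morphism of representable pseudo-tensor structures on $\dg Cor_{S\Q}$ which is a quasi-isomorphism on each Hom-complex. Consequently the associated tensor structures on $H^0\dg Cor_{S\Q}$ coincide, and the associativity and commutativity constraints — being determined by the maps $\epsilon_\pi$, which in both cases reduce to the symmetry constraints of $(Cor_{S\Q},\otimes)$ — are literally the same.

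Third, I would propagate this identification. By Proposition~\ref{prop:PreTrPSDef} the pseudo-tensor structures and the quasi-isomorphism $\alt\circ \iota$ between them extend canonically to $\dg Cor_{S\Q}^\pt$, and by the naturality in Theorem~\ref{tens} they induce the same tensor structure on $K^b_{dg}(\dg Cor_{S\Q})$. Restricting to $\dg PrCor_{S\Q}$ (which is a pseudo-tensor subcategory on both sides), applying the Godement/Alexander-Whitney construction $R\Gamma(S,\blank)$ and using that for semi-local $S$ we have a quasi-equivalence $\Gamma(S,\ub{dgPrCor}_{S\Q})\to R\Gamma(S,\ub{dgPrCor}_{S\Q})$ (Remark~\ref{rem:SLocal}), and finally passing to the pseudo-abelian hull and inverting $\otimes\mathbb{L}$, we get a natural isomorphism of tensor bifunctors on $SmMot_{gm}^{\mathrm{eff}}(S)_\Q$ and hence on $SmMot_{gm}(S)_\Q$.

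The only nontrivial point, and the one that would take some care, is the third step: one must check that the compatibility between the two pseudo-tensor structures is preserved when one combines it with the Alexander-Whitney map used to sheafify Hom-complexes, so that the resulting tensor constraints on the homotopy category of the global DG category are the same. Since the Alexander-Whitney map is functorial in pairs of cosimplicial complexes and the comparison $\alt\circ\iota$ is a natural quasi-isomorphism of functors on $Cor_{S\Q}^{op}\times Cor_{S\Q}$, this compatibility is automatic, but writing it out cleanly is where the bookkeeping lives.
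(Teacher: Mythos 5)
Your proposal is correct and follows the same route as the paper's: both reduce to part~(2) of Proposition~\ref{prop:QTens}, which exhibits the quasi-isomorphism $\alt\circ\iota$ intertwining the two composition laws, and then propagate through Pre-Tr, restriction to projective objects, pseudo-abelianization, and inverting $\mathbb{L}$ (the paper states this propagation much more tersely). One minor simplification you could make: the Alexander--Whitney compatibility you flag in the last paragraph is not actually needed here, because for semi-local $S$ Remark~\ref{rem:SLocal} gives a quasi-equivalence $\Gamma(S,\ub{dgPrCor}_S)\to R\Gamma(S,\ub{dgPrCor}_S)$, so the tensor structure is built directly on $K^b_{dg}(dgPrCor_S)$ and transported along the resulting equivalence of triangulated categories rather than by sheafifying the pseudo-tensor data.
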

\begin{proof} 
Working with $\Q$-coefficients, we get a tensor structure on the category $SmMot_{gm}^{\mathrm{eff}}(S)_\Q$ when
$S$ is semi-local and smooth over a field of characteristic 0. The pseudo-tensor structure $^{\alt}P^{Cor_{S\Q}}_I$ 
defined in Definition~\ref{dfn:QTens} is induced by the tensor structure on  $dgCor_{S\Q}$ defined 
in \cite[\S~1.7]{smmot}. Thus by part (2) of Proposition~\ref{prop:QTens}, it follows that the tensor structure 
on $SmMot_{gm}^{\mathrm{eff}}(S)_\Q$ defined by Levine coincides with the one induced by the pseudo-tensor structure 
described in Theorem~\ref{thm:mainA}.
\end{proof}

\phantomsection
\section{Future directions} The first question that arises is the one of extending our main result corollary~\ref{cor:main} to an arbitrary regular base scheme $S$. The obstruction to this is the extension of a pseudo-tensor structure on a presheaf of DG categories $U\mapsto \mc{C}(U)$ on a topological space $X$ to a pseudo-tensor structure on the Godement resolution $R\Gamma(X, \mc{C})$.

Another question would be if, for $X\in \mathbf{Proj}_S$ of dimension $d$, $X\mapsto X^D:=X\otimes \mathbb{L}^d$ gives an exact duality
\[ D: SmMot_{gm}(S)^{op}\ra SmMot_{gm}(S)\]

Cisinski-D\'{e}glise (\cite{cd}) have defined a tensor triangulated category of effective motives over a base-scheme $S$, 
$DM^{\mathrm{eff}}(S)$, and a tensor triangulated category of motives over $S$, $DM(S)$, with an exact tensor functor $DM^{\mathrm{eff}}(S)
\ra DM(S)$ that inverts $\otimes\mathbb{L}$ (see also \cite{voevodsky}). Levine (\cite{smmot}) defined exact functors
\[ \rho^{\mathrm{eff}}_S:SmMot^{\mathrm{eff}}_{gm}(S)\ra DM^{\mathrm{eff}}(S),\quad \rho_S:SmMot_{gm}(S)\ra DM(S) \]
which give equivalences of $SmMot^{\mathrm{eff}}_{gm}(S)$, $SmMot_{gm}(S)$ with the full triangulated subcategories of $DM^{\mathrm{eff}}(S)$ 
and $DM(S)$ generated by the motives of smooth projective $S$-schemes, resp.\ the Tate twists of smooth projective 
$S$-schemes. It would be interesting to check if these are equivalences of tensor triangulated categories.

I would also try to define realization functors on $SmMot_{gm}(S)$ to the derived category of local systems of abelian
groups on $S^{\mathrm{an}}$, 
such that this would be an exact tensor functor.

\cleardoublepage
\phantomsection
\bibliography{biblo}{}
\bibliographystyle{amsalpha}

\end{document}